\DeclareMathOperator{\rank}{rank}\DeclareMathOperator{\im}{im}
 \DeclareMathOperator{\diag}{diag}
\def\D{\Delta}
\def\s{\sigma}
\newcommand{\R}{{\mathbb R}}
\newcommand{\dt}{\delta}
\newcommand{\Dt}{\Delta}
\newcommand{\Na}{\mathcal{N}}
\newcommand{\Gr}{\mathcal{G}}
\newcommand{\Po}{\mathcal{P}}
\newcommand{\Ha}{\mathcal{H}}
\newtheorem{theorem}{Theorem}[section]
\newtheorem{corollary}[theorem]{Corollary}
\newtheorem{definition}{Definition}
\newtheorem{proposition}[theorem]{Proposition}
\newtheorem{example}[theorem]{Example}
\newtheorem{remark}{Remark}[section]
\newtheorem{algo}{Algorithm}
\begin{document}

\begin{center}

{\Large\bf Polynomial representation for orthogonal projections onto subspaces of finite games
\footnote{
}}
\end{center}

\vskip 2mm

\begin{center}
	{\Large Kuize Zhang \dag\ddag }

\vskip 2mm

\textsuperscript{\dag} College of Automation, Harbin Engineering University, Harbin 150001, P.~R.~China\\
Email: zkz0017@163.com\\
\textsuperscript{\ddag}Institute of  Systems Science,
Chinese Academy of Sciences, Beijing 100190, P.~R.~China\\

\vskip 2mm

\end{center}

\vskip \baselineskip

\underline{\bf Abstract.}
The space of finite games can be decomposed into three orthogonal subspaces
\cite{Candogan2011FlowsDecompositionofGames}, which are
the subspaces of pure potential games, nonstrategic games and pure harmonic games.
The orthogonal projections onto these subspaces are represented as the Moore-Penrose inverses
of the corresponding linear operators (i.e., matrices) \cite{Candogan2011FlowsDecompositionofGames}.
Although the representation is compact and nice, no analytic method is given to
calculate Moore-Penrose inverses of these linear operators.
Hence using their results, one cannot verify whether a finite game belongs to
one of these subspaces.
In this paper, jumping over calculating Moore-Penrose inverses of these linear operators directly,
via using group inverses,
in the framework of the semitensor product of matrices,
we give explicit polynomial representation for these orthogonal projections and
for potential functions of potential games.
Using our results,  one not only can determine whether a finite game belongs to
one of these subspaces, but also can find an arbitrary finite game belonging to one of them.
Besides, we give formal definitions for these types of games by using their payoff functions.
Based on these results, more properties of finite games are revealed.

\vskip 2mm

\underline{\bf Key Words.} decomposition of finite game, potential game, nonstrategic game, harmonic game,
Moore-Penrose inverse, group inverse, semitensor product

\underline{\bf MSC2010.} 91A10, 91A70, 15A09

\vskip \baselineskip

\section{Introduction}
\label{sec:Introduction}

Rosenthal \cite{Rosenthal1973PotentialGames} initiates the concept of potential games, and proves that
every potential game has a pure Nash equilibrium.
Monderer and Shapley \cite{Monderer1996PotentialGames} systematically investigate potential games,
give a method to verify whether a given game is potential, and prove that every potential game
is isomorphic to a congestion game.  Intuitively speaking, a potential game is a game with a function
from the set of strategy profiles to the reals satisfying that, the deviation of every player's payoff caused
by the deviation of this player's strategies is equal to that of the function also caused by
the deviation of this player's strategies. Partially due to the fact that
there is one common function describing the deviation of
every player's payoff, potential games have been applied to many fields, e.g.,
traffic networks
\cite{Marden2009JSFPwithInertiaPotentialGames,Xiao2013ASFP,Wang2013DistributedConsensusCongestionGame},
cooperative control \cite{Marden2009CooperativeControlPotentialGame},
optimization of distributed coverage of graphs \cite{Zhu2013DistributedCoverageGame},
etc..

Although potential games possess so good properties and wide applications, there
are other types of games that are not potential but still have good properties and potential
applications. For example, the Rock-Paper-Scissors game is not potential,
but has the uniformly mixed strategy profile
as a mixed Nash equilibrium \cite{Candogan2011FlowsDecompositionofGames}. Hence
it is necessary to give a systematic characterization for these finite games to
investigate properties of other types of games and find their practical applications.
When the number of players and the numbers of their strategies are fixed,
Candogan et al. \cite{Candogan2011FlowsDecompositionofGames} identify the set of finite games with a finite-dimensional
Euclidean space, and decompose this space into three orthogonal subspaces as follows:
\begin{equation}
\rlap{$\underbrace{\phantom{\quad{\mathcal P}\quad\oplus\quad{\mathcal N}}}_{Potential\quad games}$}\quad{\mathcal P}\quad\oplus\quad
\overbrace{{\mathcal N}\quad\oplus\quad{\mathcal H}}^{Harmonic\quad games},
\end{equation}
where these subspaces are (\romannumeral1) the pure potential subspace ${\mathcal P}$,
(\romannumeral2) the nonstrategic subspace ${\mathcal N}$,
and  (\romannumeral3) the pure harmonic subspace ${\mathcal H}$.
It is also demonstrated in \cite{Candogan2011FlowsDecompositionofGames}
that the pure potential subspace plus the nonstrategic subspace
is the potential subspace, denoted as
${\mathcal G}_P={{\mathcal P}}\oplus {{\mathcal N}}$;
and the pure harmonic subspace plus the nonstrategic subspace is the harmonic subspace, denoted as $
{\mathcal G}_H={{\mathcal H}}\oplus {{\mathcal N}}$. Nonstrategic games are such that
every strategy profile is a pure Nash equilibirium.  Harmonic games generically do not have
pure Nash equilibria, but always have the uniformly mixed strategy profiles as mixed Nash
equilibria. Actually Candogan et al. \cite{Candogan2011FlowsDecompositionofGames}
show orthogonal projections onto these subspaces
by using
the {\it Moore-Penrose inverses} of the corresponding linear operators (i.e., matrices).
Although these orthogonal projections are very compact and nice, one cannot use them to
verify whether a finite game belongs to one of these subspaces, because it is not shown how to
calculate the Moore-Penrose inverses of these linear operators in
\cite{Candogan2011FlowsDecompositionofGames}.
It is well known that Moore-Penrose inverses
of matrices can be represented via singular value decompositions of matrices (details are seen
in Subsection \ref{subsec:GI}), and it is impossible to calculate singular value decompositions
of matrices analytically.
Although the corresponding matrices related to decomposition of finite games have special structure,
to the best of our knowledge, there has been no analytic method to calculate their singular value decompositions.
Hence it is very difficult to obtain explicit forms of these orthogonal projections.
It is important to obtain explicit forms of these projections, because if they were obtained,
one could verify whether a given finite game belongs to one of these subspaces, and furthermore,
one could find an arbitrary finite game belonging to one of these subspaces.
Hence explicit forms are very important for further study on properties of games belong to
these subspaces.
In this paper,  we not only show how to calculate these orthogonal projections,
but also give closed (actually polynomial) forms  for them.
Using these results, one not only can verify whether a finite game belongs to one of
these subspaces, but also can construct an arbitrary game belonging to one of these subspaces
(Theorem \ref{thm17:game_decomposition}).
The advantage of Theorem \ref{thm17:game_decomposition} is that
it shows formulae for these orthogonal projections as functions of the number of players
and those of their strategies. Hence no matter whether the
number of players and those of their strategies are given,
one knows what these orthogonal projections ``look like''.

Our results are given in the framework of the {\it semitensor product (STP) of matrices} built
by Cheng \cite{Cheng2014PotentialGame}, in which a linear equation (called potential equation) is
proposed such that a finite game is potential iff the potential equation has a solution; and it is also proved that
if the potential equation has a solution, then the potential function
of the corresponding game can be calculated from any solution. In this paper, based on Theorem
\ref{thm17:game_decomposition}, we will show a further result
(Theorem \ref{thm4:game_decomposition}), i.e., closed (actually polynomial) forms for both the solutions of
potential equations and the corresponding potential functions. Using our results,
to testify whether a finite game is potential
and to calculate its potential function, one only needs to
calculate the multiplication and addition of matrices only depending on the number of players
and those of their strategies.
Specifically, when the number of players and those of their strategies are given, we
can show the orthogonal projection $P_{\Gr_P}$ onto the subspace of potential games,
then every game $u$ with the same number of players and those of their strategies
is potential iff $P_{\Gr_P}u=u$ (details are seen in Section \ref{sec:explicitProjontoGameSubsapce}).
Note that $P_{\Gr_P}$ does not depend on $u$,
but the potential equation depends on $u$ \cite{Cheng2014PotentialGame}.
So our result is an essential improvement compared to \cite{Cheng2014PotentialGame}.
The STP of matrices is for the first time proposed by Cheng \cite{Cheng2001STP}.
STP is a natural generalization of the conventional matrix product,
and has been applied into many fields, e.g.,
analysis and control of Boolean control networks
\cite{Cheng(2011book),Laschov2013ObservabilityofBN:GraphApproach,Fornasini2013ObservabilityReconstructibilityofBCN,Zou2015KalmanDecompositionBCN,Zhang2015Invertibility_BCN}, 
control-theoretic problems
\cite{Cheng2001STP}, symmetry of dynamical systems \cite{Cheng2007NonlinearSys.LinearSymmetry},
differential geometry and Lie algebras \cite{Cheng2003STPtoDifferentialGeometryLieAlgebra}, etc..
Basic knowledge on STP is shown in Section \ref{sec:ERofFGS:preliminary}.

The key tool that we use to obtain the polynomial representation
of these orthogonal projections and potential functions is {\it group inverse}.
Moore-Penrose inverses and group inverses are both generalized inverses.
Actually, the main results of this paper are obtained from a result given in our previous
paper \cite{Zhang(2012)}. Basic knowledge is seen in Subsection \ref{subsec:GI}.

The remainder parts of this paper are arranged as follows.
Section \ref{sec:preli} introduces necessary basic knowledge on finite-dimensional Euclidean spaces,
Moore-Penrose inverses and group inverses.
Section \ref{sec:ERofFGS:preliminary} introduces noncooperative strategic form  finite games
and their vector space structure in the framework of STP.
 Section \ref{sec:ProjontoGameSubsapce} shows orthogonal projections
onto subspaces of finite games based on Moore-Penrose inverses in the framework of STP.
Section \ref{sec:explicitProjontoGameSubsapce} shows the main contribution of this paper, polynomial
representation for these orthogonal projections and for potential functions of potential games.
Some examples are also shown to demonstrate the advantage of our results.
Section \ref{sec:conclusion} ends up with some remar ks.

%

\section{Preliminaries}\label{sec:preli}

In this section, necessary basic knowledge on Euclidean spaces, Moore-Penrose inverses and group inverses of matrices
are introduced. Notations are first shown as below.

\subsection{Notations}

\begin{itemize}
  \item $2^A$: the power set of set $A$
  \item $\im(A)$ (resp. $\ker(A)$): the image (resp. kernel) space of
	  a matrix $A$
  \item $\R$: the set of real numbers (the reals for short)
  \item $\R^m$: the set of $m$-dimensional real column vector space
  \item $\R_{m\times n}$:  the set of $m\times n$ real matrices
  \item $I_n$: the $n\times n$ identity matrix
  \item $\delta_n^i$: the $i$-th column of the identity matrix $I_n$
  \item $\Delta_n$: the set of columns of $I_n$
  \item $[1,p]$: the first $p$ positive integers
  \item $A^T$: the transpose of matrix $A$
  \item ${\bf 1}_k$: $(\underbrace{1,\dots,1}_{k})^T$
  \item ${\bf 1}_{m\times n}$ ($0_{m\times n}$): the $m\times n$ matrix
	  with all entries equal to $1$ ($0$)
  \item $|A|$: the cardinality of set $A$
  \item $A^{\dag}$: the Moore-Penrose inverse of matrix $A$
  \item $A^{\sharp}$: the group inverse of square matrix $A$
  \item $C_n^k$: the binomial coefficient $\frac{n!}{k!(n-k)!}$
  \item $A_1\oplus A_2\oplus\cdots\oplus A_n$: $\begin{bmatrix}
		  A_1 & 0 & \cdots & 0\\
		  0   & A_2 & \cdots & 0\\
		  \vdots & \vdots & \ddots & \vdots\\
		  0   & 0   & \cdots  & A_n
		  \end{bmatrix}$
\end{itemize}

\subsection{Euclidean spaces and orthogonality}

In this paper, we consider the Euclidean space $\R^m$ with the conventional
inner product: for all $x,y\in\R^{m}$,
$\langle x,y\rangle=x^Ty$. Next we introduce some necessary preliminaries
on orthogonality and projections. These results can be found in many textbooks
on matrix theory, e.g., \cite{Horn2012MatrixAnalysis}.
Two vectors $x,y\in\R^m$ are called orthogonal,
denoted by $x\bot y$, if $\langle x,y\rangle=0$.
Two subspaces $\mathcal{A,B}\subset \R^m$ are called orthogonal, also denoted
by $\mathcal{A}\bot\mathcal{B}$, if for all $x\in\mathcal{A}$, all $y\in\mathcal{B}$,
$\langle x,y\rangle=0$. The unique orthogonal component subspace of $\mathcal{A}$
is denoted by $\mathcal{A}^{\bot}$.
An idempotent symmetric matrix $A\in\R_{m\times m}$
is called the orthogonal projection of $\R^m$ onto $\im(A)$.
Note that if $A\in\R_{m\times m}$ is a projection
(i.e., idempotent), then $x\in\im(A)$ iff $x=Ax$.
The following helpful proposition will be used in the main results.

\begin{proposition}\label{prop_orthogonalprojection1}
	Given two orthogonal projections
	$A_1,A_2\in\R_{m\times m}$. $\im(A_1)=\im(A_2)$ iff
	$A_1=A_2$.
\end{proposition}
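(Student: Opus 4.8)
The plan is to dispose of the trivial implication first and then concentrate on the substantive one, which is really the uniqueness of the orthogonal projection onto a fixed subspace. The ``if'' direction is immediate: if $A_1=A_2$ then trivially $\im(A_1)=\im(A_2)$. So I would spend all the effort on showing that $\im(A_1)=\im(A_2)$ forces $A_1=A_2$.

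First I would exploit the idempotency characterization recalled just above the statement, namely that for an idempotent matrix $A$ one has $x\in\im(A)$ iff $x=Ax$. Assuming $\im(A_1)=\im(A_2)$, take an arbitrary $x\in\R^m$. Then $A_2x\in\im(A_2)=\im(A_1)$, so applying the characterization to $A_1$ gives $A_1(A_2x)=A_2x$. Since $x$ is arbitrary, this yields the matrix identity $A_1A_2=A_2$. By the symmetric role of $A_1$ and $A_2$ (simply swapping the indices in the same argument), I likewise obtain $A_2A_1=A_1$. Next I would bring in the symmetry hypothesis $A_1^T=A_1$ and $A_2^T=A_2$: transposing $A_1A_2=A_2$ gives $A_2^TA_1^T=A_2^T$, i.e. $A_2A_1=A_2$ after using symmetry. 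Comparing this with $A_2A_1=A_1$ from the previous step gives $A_1=A_2$, which finishes the nontrivial direction.

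I do not expect a serious obstacle here; the proof is short once the bookkeeping is set up. The one point to get right is that \emph{both} hypotheses must be used: idempotency (together with the image equality) produces the cross relations $A_1A_2=A_2$ and $A_2A_1=A_1$, while symmetry, invoked through transposition, is exactly what collapses these into $A_1=A_2$. Either ingredient alone is insufficient, since idempotency only guarantees that each matrix acts as the identity on the common image $\im(A_1)=\im(A_2)$, and it is symmetry that singles out the \emph{orthogonal} projection uniquely among all (possibly oblique) idempotents with that image.
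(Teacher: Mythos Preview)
Your proof is correct and essentially identical to the paper's: both derive the cross relations $A_1A_2=A_2$ and $A_2A_1=A_1$ from idempotency and the image equality, then invoke symmetry via transposition to conclude $A_1=A_2$. The only cosmetic difference is that the paper writes the final step as the single chain $A_1=A_1^T=(A_2A_1)^T=A_1^TA_2^T=A_1A_2=A_2$, whereas you transpose one relation and compare.
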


\begin{proof}
	The ``if'' part holds obviously.
	Next we prove the ``only if'' part.
	For all $x\in\R^m$, $A_1x\in\im(A_1)=\im(A_2)$, then
	$A_2A_1x=A_1x$. Hence $A_2A_1=A_1$. Symmetrically $A_1A_2=A_2$.
	Then $A_1=A_1^T=(A_2A_1)^T=A_1^TA_2^T=A_1A_2=A_2$.
\end{proof}

\subsection{Moore-Penrose inverses and group inverses}\label{subsec:GI}

In this subsection we introduce necessary basic knowledge on Moore-Penrose inverses
and group inverses (two types of generalized inverses of matrices).
The following Propositions \ref{prop_MoorePenroseinverse},
\ref{prop_linearspace} and \ref{prop_groupinverse} over the complex field
can be found in \cite{Wang2004GeneralizedInverseBook}. Their current version
over the real field can be proved similarly
by using the singular value
decomposition of matrices over the real field.
Their proofs are omitted.

For a matrix $A\in\R_{m\times n}$, a matrix $X\in\R_{n\times m}$ satisfying
$AXA=A,XAX=X,(AX)^T=AX,(XA)^T=XA$ is called the Moore-Penrose inverse
of $A$, and is denoted by $X=A^{\dag}$.

\begin{proposition}\label{prop_MoorePenroseinverse}
	A matrix $A\in\R_{m\times n}$ has one and only one Moore-Penrose inverse.
\end{proposition}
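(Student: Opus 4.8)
The plan is to establish existence and uniqueness separately, since these are logically independent halves of the statement. For uniqueness I would rely purely on the four defining Penrose equations $AXA=A$, $XAX=X$, $(AX)^T=AX$, $(XA)^T=XA$, manipulating them algebraically. For existence I would produce an explicit candidate from the singular value decomposition (SVD) of $A$ and verify that it satisfies all four equations; this is exactly the route hinted at by the remark preceding the statement.

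For existence, I would start from an SVD $A = U\Sigma V^T$, where $U\in\R_{m\times m}$ and $V\in\R_{n\times n}$ are orthogonal and $\Sigma\in\R_{m\times n}$ carries the singular values $\sigma_1\geq\cdots\geq\sigma_r>0$ (with $r=\rank(A)$) padded by zeros. I would define $\Sigma^{\dag}\in\R_{n\times m}$ as the matrix obtained by transposing the shape of $\Sigma$ and replacing each nonzero $\sigma_i$ by $1/\sigma_i$, and then set $X=V\Sigma^{\dag}U^T$. Because $U,V$ are orthogonal, checking the four conditions reduces to the purely diagonal identities $\Sigma\Sigma^{\dag}\Sigma=\Sigma$ and $\Sigma^{\dag}\Sigma\Sigma^{\dag}=\Sigma^{\dag}$, together with the symmetry of $\Sigma\Sigma^{\dag}$ and $\Sigma^{\dag}\Sigma$; all four are immediate from the definition of $\Sigma^{\dag}$. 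This exhibits at least one Moore-Penrose inverse.

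For uniqueness, suppose $X$ and $Y$ both satisfy the four equations for $A$. I would first establish $AX=AY$ via
\begin{align*}
AX &= (AX)^T = X^TA^T = X^T(AYA)^T = X^TA^TY^TA^T\\
&= (AX)^T(AY)^T = (AX)(AY) = (AXA)Y = AY,
\end{align*}
where the symmetry conditions on $AX$ and $AY$ and the outer condition $AYA=A$ are used. By the symmetric computation, now invoking the conditions on $XA$ and $YA$ and $AXA=A$, I would obtain $XA=YA$. Combining these, $X=XAX=X(AY)=(XA)Y=(YA)Y=Y(AY)=YAY=Y$, which gives $X=Y$.

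The conceptual weight sits in the existence half, which is why I would lean on the SVD: once that decomposition is granted, the verification collapses to diagonal bookkeeping. The step I expect to demand the most care is the uniqueness chain, since each equality must invoke exactly one of the four Penrose conditions in precisely the right place; a single misplaced transpose or a wrong choice of which outer identity ($AXA=A$ versus $AYA=A$) to apply derails the argument. Everything else is routine.
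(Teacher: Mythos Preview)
Your proof is correct. The paper itself omits the proof of this proposition, merely remarking that the real case follows the textbook complex case via the singular value decomposition; your existence argument is exactly this SVD construction, and your uniqueness argument is the standard Penrose-equation manipulation, so you are fully in line with what the paper points to.
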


Note that for any matrix $A\in\R_{m\times n}$ with its singular value decomposition
$$A=U\begin{bmatrix}
	\diag(\lambda_1,\dots,\lambda_r) & 0\\
	0 & 0
\end{bmatrix}V^T,$$
where each $\lambda_i$ is a positive singular value of $A$, $U\in\R_{m\times m}$
and $V\in\R_{n\times n}$ are both orthogonal matrices,
$$A^{\dag}=V\begin{bmatrix}
	\diag(\lambda_1^{-1},\dots,\lambda_r^{-1}) & 0\\
	0 & 0
\end{bmatrix}U^T.$$
It is well known that it is impossible to calculate $\lambda_i$'s analytically,
hence it is impossible to calculate $A^{\dag}$ analytically based on the singular value decomposition.

\begin{proposition}\label{prop_linearspace}
	For every matrix $A\in\R_{m\times n}$,
	$\R^m=\im(A)\oplus\ker(A^T)=\im(AA^{\dag})\oplus\im(I_m-AA^{\dag})$,
	$\im(A)=\im(AA^T)=\im(AA^{\dag})$, $\ker(A^T)=\ker(AA^T)=
	\im(I_m-AA^{\dag})$,
	$AA^{\dag}$ (resp. $I_m-AA^{\dag}$) is the orthogonal projection
	of $\R^m$ onto
	$\im(A)$ (resp. $\ker(A^{T})$). 
\end{proposition}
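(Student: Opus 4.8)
The plan is to work directly from the four defining identities of the Moore--Penrose inverse together with the fundamental orthogonality relations $\im(A)=\ker(A^T)^{\bot}$ and, for a symmetric matrix $S$, $\im(S)=\ker(S)^{\bot}$, rather than re-deriving everything from the singular value decomposition; the SVD route alluded to in the text would also work, but the identity-based argument keeps the bookkeeping short. Throughout I write $P:=AA^{\dag}$ and treat the several subspace equalities as a sequence of two-sided inclusions (or complement-taking steps) to be verified one at a time.

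First I would show $P$ is an orthogonal projection. Symmetry is immediate from the third Moore--Penrose identity $(AX)^T=AX$ with $X=A^{\dag}$, giving $P^T=P$; idempotency follows from $P^2=AA^{\dag}AA^{\dag}=(AA^{\dag}A)A^{\dag}=AA^{\dag}=P$, using $AA^{\dag}A=A$. Hence $P$ is idempotent and symmetric, so by the definition recalled above it is the orthogonal projection of $\R^m$ onto $\im(P)=\im(AA^{\dag})$. To identify this image with $\im(A)$, note $\im(AA^{\dag})\subseteq\im(A)$ trivially, while $A=AA^{\dag}A=PA$ forces $\im(A)\subseteq\im(P)$; the two inclusions give $\im(AA^{\dag})=\im(A)$, so $P$ projects onto $\im(A)$.

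Next I would bring $AA^T$ into the chain. The key computation is $\ker(AA^T)=\ker(A^T)$: one inclusion is clear, and conversely if $AA^Tx=0$ then $0=x^TAA^Tx=\|A^Tx\|^2$ forces $A^Tx=0$. Since $AA^T$ is symmetric, $\im(AA^T)=\ker(AA^T)^{\bot}=\ker(A^T)^{\bot}=\im(A)$, the last step being the fundamental relation $\im(A)=\ker(A^T)^{\bot}$. This yields both $\im(A)=\im(AA^T)=\im(AA^{\dag})$ and $\ker(A^T)=\ker(AA^T)$ at once.

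Finally I would treat the complementary projection and the two direct-sum decompositions. The matrix $I_m-P$ is symmetric and satisfies $(I_m-P)^2=I_m-2P+P^2=I_m-P$, so it is again an orthogonal projection, onto $\im(I_m-AA^{\dag})$. Because an orthogonal projection and its complement split the space as $\R^m=\im(P)\oplus\im(I_m-P)$ with the two summands mutually orthogonal, and because $\im(P)=\im(A)$ has orthogonal complement $\ker(A^T)$, I obtain $\im(I_m-AA^{\dag})=\ker(A^T)$ and both stated decompositions $\R^m=\im(A)\oplus\ker(A^T)=\im(AA^{\dag})\oplus\im(I_m-AA^{\dag})$. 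I do not expect a genuine obstacle here; the only delicate point is keeping the complement-taking steps honest, in particular invoking $\im(A)=\ker(A^T)^{\bot}$ and the symmetric-matrix identity $\im(S)=\ker(S)^{\bot}$ exactly where needed.
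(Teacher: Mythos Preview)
Your argument is correct: each step---$P=AA^{\dag}$ is symmetric and idempotent, $\im(P)=\im(A)$ via $A=PA$, $\ker(AA^T)=\ker(A^T)$ via the norm trick, and the complementary projection $I_m-P$ picks up $\ker(A^T)$---is clean and needs no further justification.

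The paper itself does not actually prove this proposition; it is stated as a standard fact taken from \cite{Wang2004GeneralizedInverseBook} with the remark that the real case follows ``similarly by using the singular value decomposition.'' So the route implicitly endorsed there is the SVD one you mention and set aside. Your identity-based approach is a genuinely different path: it works directly from the four Moore--Penrose axioms and the orthogonality relation $\im(A)=\ker(A^T)^{\bot}$, avoiding any need to compute or even invoke singular values. The SVD route has the advantage of making every subspace explicit in coordinates (columns of $U$ and $V$), which is pedagogically transparent; your route has the advantage of being shorter, coordinate-free, and of showing that the conclusions are purely algebraic consequences of the defining equations, independent of any factorization.
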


For a matrix $A\in\R_{n\times n}$, a matrix $X\in\R_{n\times n}$ satisfying
$AXA=A,XAX=X,AX=XA$ is called the group inverse
of $A$, and is denoted by $X=A^{\sharp}$.

\begin{proposition}\label{prop_groupinverse}
	A matrix $A\in\R_{n\times n}$ has at most one group inverse, and
	$A$ has a group inverse iff $\rank(A)=\rank(A^2)$.
	If $A$ has a group inverse $A^{\sharp}$, then $A^{\sharp}$ is
	a polynomial of $A$.
\end{proposition}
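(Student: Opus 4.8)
The plan is to treat the three assertions—uniqueness, the rank characterization of existence, and the polynomial representation—separately, leaving the last for the end since it is the most delicate. For uniqueness I would argue purely algebraically from the defining identities $AXA=A$, $XAX=X$, $AX=XA$. First I would record that $AX$ is idempotent, since $(AX)^2=(AXA)X=AX$, that $\im(AX)=\im(A)$ (because $A=AXA=(AX)A$ forces $\im(A)\subseteq\im(AX)$, the reverse inclusion being automatic), and that $\ker(AX)=\ker(A)$: if $Av=0$ then $AXv=XAv=0$ by commutativity, while if $AXv=0$ then $XAv=0$, whence $Av=AXAv=0$. Since an idempotent is determined by its image and kernel, $AX=XA$ must be the projection onto $\im(A)$ along $\ker(A)$. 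Applying the same to a second group inverse $Y$ yields $AX=XA=AY=YA=:P$, and then the chain $X=XAX=X(AY)=(XA)Y=(YA)Y=YAY=Y$ closes the argument.

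For the existence criterion I would exploit the interplay between $\im(A)$ and $\ker(A)$. The forward direction is short: from $AX=XA$ and $AXA=A$ one gets $A=A(XA)=A(AX)=A^2X$, so every column of $A$ lies in $\im(A^2)$, giving $\im(A)=\im(A^2)$ and hence $\rank(A)=\rank(A^2)$. For the converse, assuming $\rank(A)=\rank(A^2)$, I would first deduce $\ker(A)=\ker(A^2)$ and $\im(A)=\im(A^2)$ from equal dimensions together with the trivial inclusions, and then prove $\R^n=\im(A)\oplus\ker(A)$: the dimension count comes from rank--nullity, and $\im(A)\cap\ker(A)=\{0\}$ because $v=Aw$ with $Av=0$ forces $w\in\ker(A^2)=\ker(A)$, so $v=Aw=0$. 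Both summands are $A$-invariant and $A$ restricts to an invertible map on $\im(A)$; defining $X$ to be this restricted inverse on $\im(A)$ and $0$ on $\ker(A)$, I would verify $AXA=A$, $XAX=X$, $AX=XA$ block-wise on the two summands.

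The main obstacle is the polynomial representation, where I expect to spend the real effort. Writing $X=p(A)$ makes $AX=XA$ automatic, so the task reduces to finding a polynomial $p$ for which the remaining two identities hold as operator equations, i.e. as congruences modulo the minimal polynomial $m(t)$ of $A$: concretely $t^{2}p(t)\equiv t$ and $t\,p(t)^{2}\equiv p(t)\pmod{m(t)}$. The hypothesis $\rank(A)=\rank(A^{2})$ is exactly $\ker(A)=\ker(A^{2})$, i.e. $0$ is a root of $m$ of multiplicity at most one, so I may write $m(t)=t^{\epsilon}h(t)$ with $\epsilon\in\{0,1\}$ and $h(0)\neq0$. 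Since $t$ and $h(t)$ are then coprime, the Chinese Remainder Theorem in $\R[t]$ delivers a polynomial $p$ with $p\equiv0\pmod{t^{\epsilon}}$ and $p\equiv t^{-1}\pmod{h}$, the inverse of $t$ modulo $h$ existing precisely because $h(0)\neq0$. Checking the two required congruences in each CRT component is then routine, and by the uniqueness already established the resulting $p(A)$ must equal $A^{\sharp}$. The delicate points to get right are the index-one reduction $m(t)=t^{\epsilon}h(t)$ forced by the rank condition and the verification that the single $p$ produced by CRT simultaneously satisfies both congruences; everything else is bookkeeping.
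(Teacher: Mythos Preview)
Your proof is correct in all three parts. The uniqueness argument via the idempotent $AX$ being the projection onto $\im(A)$ along $\ker(A)$ is clean; the existence argument via the direct-sum decomposition $\R^n=\im(A)\oplus\ker(A)$ is standard and complete; and the CRT construction of $p(t)$ modulo $m(t)=t^{\epsilon}h(t)$ works exactly as you describe, the key point being that $\ker(A)=\ker(A^2)$ forces the index of $0$ in $m$ to be at most one.

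There is no proof in the paper to compare against: the authors simply cite the textbook of Wang, Wei and Qiao and remark that the real case ``can be proved similarly by using the singular value decomposition,'' then omit the argument. Your route is different from what that hint suggests and in some respects preferable here: it is purely algebraic, uses no spectral or SVD machinery, and the minimal-polynomial/CRT construction of $A^{\sharp}=p(A)$ makes the polynomial assertion constructive rather than merely existential. An SVD-based proof would give the projections $AA^{\sharp}$ more explicitly in an orthonormal frame but would obscure the polynomial nature of $A^{\sharp}$, which is precisely the property the paper later exploits.
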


By Proposition \ref{prop_groupinverse},
every symmetric square real matrix has a group inverse.
By Proposition \ref{prop_linearspace},
for every matrix $A\in\R_{m\times n}$,
$AA^T(AA^T)^{\sharp}$ is the orthogonal projection
of $\R^m$ onto $\im(A)$, 
because $(AA^T)^{\sharp}=(AA^{T})^{\dag}$.
Then the following Proposition \ref{prop_groupMPinverse} holds.

\begin{proposition}\label{prop_groupMPinverse}
	For every matrix $A\in\R_{m\times n}$, $AA^{\dag}=AA^T(AA^T)^{\sharp}=
	A(A^TA)^{\sharp}A^T$.
\end{proposition}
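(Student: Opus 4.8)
The plan is to recognize all three matrices as the orthogonal projection of $\R^m$ onto $\im(A)$, and then appeal to Proposition \ref{prop_orthogonalprojection1}, which forces two orthogonal projections with the same image to coincide. Proposition \ref{prop_linearspace} already identifies $AA^{\dag}$ as the orthogonal projection onto $\im(A)$, so the whole task reduces to showing that the two group-inverse expressions are orthogonal projections with image exactly $\im(A)$.

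For $AA^T(AA^T)^{\sharp}$, I would first record that for a symmetric matrix $M$ the group inverse agrees with the Moore-Penrose inverse: since $M^{\sharp}$ is a polynomial in $M$ (Proposition \ref{prop_groupinverse}) it is symmetric, and then the three defining relations $MM^{\sharp}M=M$, $M^{\sharp}MM^{\sharp}=M^{\sharp}$, $MM^{\sharp}=M^{\sharp}M$ immediately verify the four Moore-Penrose conditions, so $M^{\sharp}=M^{\dag}$ by uniqueness (Proposition \ref{prop_MoorePenroseinverse}). Taking $M=AA^T$ gives $(AA^T)^{\sharp}=(AA^T)^{\dag}$, and Proposition \ref{prop_linearspace} applied with $M$ in the role of $A$ then identifies $MM^{\sharp}=AA^T(AA^T)^{\sharp}$ as the orthogonal projection onto $\im(AA^T)=\im(A)$. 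Proposition \ref{prop_orthogonalprojection1} finishes the first equality.

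The substantive step is the third matrix $P:=A(A^TA)^{\sharp}A^T$, where the group inverse is sandwiched rather than appearing in a product of the form $MM^{\sharp}$. Writing $N=A^TA$ (symmetric, so $N^{\sharp}=N^{\dag}$ is symmetric), symmetry of $P$ is immediate, and idempotency follows from $N^{\sharp}NN^{\sharp}=N^{\sharp}$ after replacing $A^TA$ by $N$ inside $P^2$. Thus $P$ is an orthogonal projection, and $\im(P)\subseteq\im(A)$ is clear; the point is the reverse inclusion. For this I would show $PA=A$, using that $I_n-N^{\sharp}N$ is the orthogonal projection onto $\ker(N)=\ker(A^TA)=\ker(A)$ --- the last equality being Proposition \ref{prop_linearspace} applied to $A^T$ --- so that $A(I_n-N^{\sharp}N)=0$ and hence $PA=AN^{\sharp}(A^TA)=AN^{\sharp}N=A$. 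Then any $x=Ay\in\im(A)$ satisfies $Px=PAy=Ay=x$, giving $\im(A)\subseteq\im(P)$ and therefore $\im(P)=\im(A)$; a further application of Proposition \ref{prop_orthogonalprojection1} yields $P=AA^{\dag}$.

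I expect the only genuine obstacle to be the image computation for $P$, namely upgrading $\im(P)\subseteq\im(A)$ to equality; everything else (symmetry, idempotency, and the symmetric-matrix identity $M^{\sharp}=M^{\dag}$) is routine. The two facts that drive the argument are that a symmetric matrix has a symmetric group inverse and that $\ker(A^TA)=\ker(A)$, the latter being the transpose-analogue of the identities $\im(A)=\im(AA^T)$ recorded in Proposition \ref{prop_linearspace}.
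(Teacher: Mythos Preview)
Your argument is correct and, for the first equality $AA^{\dag}=AA^T(AA^T)^{\sharp}$, matches the paper's justification exactly: the paper's entire proof is the remark preceding the proposition that $(AA^T)^{\sharp}=(AA^T)^{\dag}$ for the symmetric matrix $AA^T$, whence $AA^T(AA^T)^{\sharp}$ is the orthogonal projection onto $\im(A)$ and coincides with $AA^{\dag}$. The paper does not spell out the second equality $AA^{\dag}=A(A^TA)^{\sharp}A^T$ at all, so your treatment of $P=A(A^TA)^{\sharp}A^T$ --- verifying symmetry and idempotency directly and obtaining $\im(P)=\im(A)$ via $PA=A$ from $\ker(A^TA)=\ker(A)$ --- actually supplies a detail the paper omits; it is correct and uses only the propositions already available.
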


%

Proposition \ref{prop_groupinverse_Oredomain} is a special case of
our previous result \cite[Theorem 4.1]{Zhang(2012)}, and is
a central proposition that leads to the main results of the current paper.

\begin{proposition}[\cite{Zhang(2012)}]\label{prop_groupinverse_Oredomain}
	A matrix $A\in\R_{n\times n}$ has
	a group inverse iff there is a matrix $X\in\R_{n\times n}$ such that
	$A^2X=A$. If $A$ has a group inverse, then for every $X$ satisfying
	$A^2X=A$, $A^{\sharp}=AX^2$.
\end{proposition}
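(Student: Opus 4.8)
The plan is to reduce both halves of the equivalence to the rank criterion already supplied by Proposition \ref{prop_groupinverse}, namely that $A$ has a group inverse iff $\rank(A)=\rank(A^2)$. For the direction assuming a group inverse $A^{\sharp}$ exists, I would simply exhibit a solution: from the defining relations $AA^{\sharp}A=A$ and $AA^{\sharp}=A^{\sharp}A$ one gets $A^2A^{\sharp}=A(A^{\sharp}A)=(AA^{\sharp})A=AA^{\sharp}A=A$, so $X=A^{\sharp}$ satisfies $A^2X=A$. Conversely, if some $X$ satisfies $A^2X=A$, then $\im(A)=\im(A^2X)\subseteq\im(A^2)\subseteq\im(A)$, forcing $\im(A)=\im(A^2)$ and hence $\rank(A)=\rank(A^2)$; Proposition \ref{prop_groupinverse} then yields the group inverse. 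This half is routine.

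The substantive part is the closed formula $A^{\sharp}=AX^2$ for an arbitrary solution $X$ of $A^2X=A$, and here I would work with the spectral projection $P:=AA^{\sharp}=A^{\sharp}A$. First I would record the standard consequences of the three group-inverse axioms: $P$ is idempotent, $PA=AP=A$, and $PA^{\sharp}=A^{\sharp}P=A^{\sharp}$, together with the identity $A^{\sharp}A^2=PA=A$. Each of these follows by a one-line manipulation of $AA^{\sharp}A=A$, $A^{\sharp}AA^{\sharp}=A^{\sharp}$, and $AA^{\sharp}=A^{\sharp}A$.

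The crux is to show that $AX=P$. The idea is to left-multiply the hypothesis $A^2X=A$ by $A^{\sharp}$: the left side becomes $(A^{\sharp}A^2)X=AX$ using $A^{\sharp}A^2=A$, while the right side becomes $A^{\sharp}A=P$. Once $AX=P$ is in hand, the formula falls out immediately, since $AX^2=(AX)X=PX$, and left-multiplying $AX=P$ by $A^{\sharp}$ gives $PX=(A^{\sharp}A)X=A^{\sharp}(AX)=A^{\sharp}P=A^{\sharp}$; hence $AX^2=A^{\sharp}$. Uniqueness of the group inverse (Proposition \ref{prop_groupinverse}) guarantees this candidate really is $A^{\sharp}$. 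The main obstacle to anticipate is precisely the identity $AX=P$: because an arbitrary solution $X$ need neither be a polynomial in $A$ nor commute with $A$, one cannot manipulate $X$ freely, and the whole argument hinges on multiplying on the correct side by $A^{\sharp}$ so that $X$ is sandwiched against the known quantity $A^{\sharp}A^2=A$.
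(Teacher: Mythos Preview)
Your argument is correct. The equivalence is handled cleanly via the rank criterion of Proposition~\ref{prop_groupinverse}, and the derivation of $A^{\sharp}=AX^2$ is valid: the key identity $AX=P$ follows exactly as you say from left-multiplying $A^2X=A$ by $A^{\sharp}$ and using $A^{\sharp}A^2=A$, and then $PX=A^{\sharp}$ follows by one more left-multiplication by $A^{\sharp}$.

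Note, however, that the paper does not actually supply its own proof of this proposition; it is quoted as a special case of \cite[Theorem~4.1]{Zhang(2012)} and left unproved in the text. So there is no in-paper argument to compare against. Your proof is a self-contained verification over $\R$ that relies only on the group-inverse axioms and Proposition~\ref{prop_groupinverse}; the cited result in \cite{Zhang(2012)} is stated in the more general setting of matrices over right Ore domains, so the original proof presumably avoids rank arguments in favour of purely algebraic manipulations valid in that broader context. For the purposes of this paper (real matrices), your approach is entirely adequate and arguably the most transparent route.
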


The following Proposition
\ref{prop_MPinverse_rangeinclusion} will also play an important role
in the main results.

\begin{proposition}\label{prop_MPinverse_rangeinclusion}
	Let $A\in\R_{p\times q}$ and $B\in\R_{p\times r}$ and $\im(A)\supset
	\im(B)$. Then $BB^{\dag}AA^{\dag}=AA^{\dag}BB^{\dag}=BB^{\dag}$.
\end{proposition}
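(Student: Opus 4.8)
The plan is to read the statement through Proposition~\ref{prop_linearspace}, which identifies $AA^{\dag}$ as the orthogonal projection of $\R^p$ onto $\im(A)$ and, applied to $B$, identifies $BB^{\dag}$ as the orthogonal projection onto $\im(B)$; in particular both are symmetric idempotent matrices, and $\im(AA^{\dag})=\im(A)$, $\im(BB^{\dag})=\im(B)$. Once this is in place, the whole claim reduces to the elementary fact that composing two orthogonal projections with nested ranges, in either order, returns the projection onto the smaller range.

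First I would establish the single identity $AA^{\dag}BB^{\dag}=BB^{\dag}$ by a short vector-chasing argument. Fix an arbitrary $x\in\R^p$. Then $BB^{\dag}x\in\im(BB^{\dag})=\im(B)$, and by hypothesis $\im(B)\subset\im(A)=\im(AA^{\dag})$. Since $AA^{\dag}$ is idempotent, every vector $y$ in $\im(AA^{\dag})$ satisfies $AA^{\dag}y=y$; applying this with $y=BB^{\dag}x$ gives $AA^{\dag}BB^{\dag}x=BB^{\dag}x$. As $x$ is arbitrary, $AA^{\dag}BB^{\dag}=BB^{\dag}$.

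The reverse-order identity $BB^{\dag}AA^{\dag}=BB^{\dag}$ I would then obtain for free by transposition, exploiting the symmetry of both factors. Transposing the equality just proved and using $(AA^{\dag})^T=AA^{\dag}$ and $(BB^{\dag})^T=BB^{\dag}$ yields $BB^{\dag}AA^{\dag}=(AA^{\dag}BB^{\dag})^T=(BB^{\dag})^T=BB^{\dag}$. Combining the two equalities gives $BB^{\dag}AA^{\dag}=AA^{\dag}BB^{\dag}=BB^{\dag}$, which is exactly the claim.

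I do not anticipate any real obstacle: the argument is essentially a one-line observation once the projections are correctly identified. The only points demanding care are (i) invoking Proposition~\ref{prop_linearspace} precisely enough to pin down both the ranges $\im(AA^{\dag})=\im(A)$, $\im(BB^{\dag})=\im(B)$ and the symmetry of the two projection matrices, and (ii) resisting the temptation to re-prove the second equality by a separate vector chase, handling it instead by the transpose trick, which is cleaner and makes the symmetry of the roles transparent.
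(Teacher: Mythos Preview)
Your proposal is correct and follows essentially the same route as the paper's own proof: both establish $AA^{\dag}BB^{\dag}=BB^{\dag}$ by the vector-chasing argument (using that $BB^{\dag}x\in\im(B)\subset\im(A)$ is fixed by the projection $AA^{\dag}$), and then obtain $BB^{\dag}AA^{\dag}=BB^{\dag}$ by transposing and using the symmetry of $AA^{\dag}$ and $BB^{\dag}$. The only cosmetic difference is that the paper cites the definition of the Moore--Penrose inverse directly for the symmetry, whereas you route it through Proposition~\ref{prop_linearspace}.
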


\begin{proof}
	Note that $BB^{\dag}$ (resp. $AA^{\dag}$)
	is an orthogonal projection of $\R^p$ onto $\im(B)$
	(resp. $\im(A)$).

	For an arbitrary $x\in\R^p$, $BB^{\dag}x\in\im(B)\subset\im(A)$,
	then $AA^{\dag}(BB^{\dag}x)=BB^{\dag}x$. Hence $AA^{\dag}BB^{\dag}=BB^{\dag}$.
	On the other hand,
	from the definition of Moore-Penrose inverses,
	$BB^{\dag}=(BB^{\dag})^T=(AA^{\dag}BB^{\dag})^T
	=(BB^{\dag})^T(AA^{\dag})^T=
	BB^{\dag}AA^{\dag}$.
\end{proof}

\section{Finite games and the semitensor product of matrices}
\label{sec:ERofFGS:preliminary}

%
%

A noncooperative  strategic form finite game can be described as a triple $(N,S,c)$, where
\begin{enumerate}
\item $N=\{1,\dots,n\}$ is the set of players;
\item $S^i=\{1,\dots,k_i\}$ is the set of strategies of player $i$, $i=1,\dots,n$;
$S=\prod_{i=1}^nS^i$ is the set of strategy profiles;
\item $c=\{c_1,c_2,\dots,c_n\}$, where $c_i:~S\to \R$ is the payoff function of player $i$,
	$i=1,\dots,n$.
\end{enumerate}

Hereinafter $S^{-i}$ denotes $\prod_{j=1,j\ne i}^{n}S^j$, and similarly for a strategy profile $s=(s_1,\dots,s_n)
\in S$, $s^{-i}$ denotes $(s_1,\dots,s_{i-1},s_{i+1},\dots,s_n)$.

A pure Nash equilibrium (cf. \cite{Monderer1996PotentialGames}, etc.)
is a strategy profile $(s_1^*,\dots,s_n^*)\in S$ such that
for all $i\in N$, all $s_i\in S^i$, $c_i(s_1^*,\dots,s_{i-1}^*,s_i,s_{i+1}^*,\dots,s_n^*)\le
c_i(s_1^*,\dots,s_n^*)$. That is, if the players play a pure Nash equilibrium,
none of the players will improve their payoffs if only one player changes his/her strategy.
Note that a finite game $(N,S,c)$ may have no pure Nash equilibria.
However, if mixed strategies are considered, every finite game has a mixed strategy Nash equilibrium
\cite{Nash1950EquilibriumPoint}. As usual, for each player $i\in N$, a mixed strategy $x_i$ is a probability
distribution $S^i\to \R$, i.e., for all $s\in S^i$, $x_i(s)\ge0$ and $\sum_{s\in S^i}x_i(s)=1$,
which means that player $i$ plays strategy $s$ with probability $x_i(s)$.
For each $i\in N$, the set of mixed strategies is denoted by $\Dt S^i$.
Then the product probability distribution is $\prod_{i=1}^n\Dt S^i$.
In particular,
a mixed strategy profile $(x_1^*,\dots,x_n^*)\in\prod_{i=1}^{n}{\Dt S^i}$ is called uniformly
mixed strategy profile, if
for all $i\in N$, all $s\in S^i$, $x_i^*(s)=\frac{1}{k_i}$.
Under the product probability distribution, payoffs $c_i$ of players $i\in N$ are naturally
extended to the expected payoffs over all pure strategy profiles
$c_i^*:\prod_{j=1}^n\Dt S^j\to\R,x=(x_1,\dots,x_n)\mapsto \sum_{s\in S}c_i(s) x(s)
=\sum_{s\in S}c_i(s)\prod_{j=1}^n x_j(s)$
that is linear in each $x_i$. Hereinafter we will use $c_i$ to denote $c_i^*$ since no confusion will occur.
A mixed Nash equilibrium is a mixed strategy profile $x^*=(x^*_1,\dots,x_n^*)\in\prod_{i=1}^n\Dt S^i$
such that for all players $i\in N$, all mixed strategies $x_i\in \Dt S^i$,
$c_i(x^*)\ge c_i(x^*_1,\dots,x^*_{i-1},x_i,x^*_{i+1},\dots,x^*_{n})$.
Note that $x^*\in\prod_{i=1}^n\Dt S^i$ is a mixed strategy Nash equilibrium iff,
for all players $i\in N$, all pure strategies $s_i\in S^i$,
$c_i(x^*)\ge c_i(x^*_1,\dots,x^*_{i-1},s_i,x^*_{i+1},\dots,x^*_{n})$.


Next we introduce the vector space structure of finite games based on the STP
of matrices built in \cite{Cheng2014PotentialGame}.
In this framework, mixed strategies and mixed strategy profiles 
can be represented as real vectors, hence payoffs can be represented as linear mapping forms.
Then all properties
of finite games can be revealed intuitively from matrices.

\begin{definition}\cite{Cheng(2011book)}
	Let $A\in \R_{m\times n}$, $B\in \R
	_{p\times q}$, and $\alpha=\mbox{lcm}
	(n,p)$ be the least common multiple of $n$ and $p$. The STP of $A$
	and $B$ is defined as
	\begin{equation*}
		A\ltimes B = (A\otimes I_{\frac{\alpha}{n}})(B\otimes I_{\frac
		{\alpha}{p}}),
		\label{def_of_stp}
	\end{equation*}
	where $\otimes$ denotes the Kronecker product.
\end{definition}

The STP of matrices is a generalization of conventional matrix product,
and many properties of the conventional matrix product remain valid,
e.g., associative law, distributive law, reverse-order law ($(A\ltimes B)^T=B^T\ltimes A^T$), etc..
Besides, for all $x\in\R^{t}$ and $A\in\R_{m\times n}$, it holds that $x\ltimes A=(I_t\otimes A)x$.
Throughout this paper, the default matrix product is STP,
so the product of two arbitrary matrices is well defined, and the symbol $\ltimes$ is usually omitted.


To use matrix expression to games, we identify
$$
j\sim \dt_{k_i}^j,\quad j=1,\dots,k_i,
$$
then $S^i\sim \D_{k_i}$, $i=1,\dots,n$. It follows that the payoff functions can be expressed as
\begin{align}\label{1.2}
c_i(x_1,\dots,x_n)=V^c_i\ltimes_{j=1}^nx_j,\quad i=1,\dots,n,
\end{align}
where $(V^c_i)^T\in \R^{k}$ is called the structure vector of $c_i$, $x_j\in\Dt_{k_j}$,
$j=1,\dots,n$, hereinafter  $k:=\prod_{i=1}^nk_i$.
Define the structure vector of a game $G$ by
\begin{align}\label{1.201}
(V^c_G)^T=(V^c_1,V^c_2,\dots,V^c_n)^T\in \R^{nk}.
\end{align}
Then it is clear that the set of strategic form finite games with $|N|=n$, and  $|S^i|=k_i$, $i=1,\dots,n$,
denoted by ${\mathcal G}_{[n;k_1,\dots,k_n]}$, has a natural vector space structure as
\begin{align}\label{1.202}
{\mathcal G}_{[n;k_1,\dots,k_n]}\sim \R^{nk}.
\end{align}
For a given game $G\in {\mathcal G}_{[n;k_1,\dots,k_n]}$, its structure vector $V^c_G$ completely determines $G$. So the vector space structure (\ref{1.202}) is very natural and reasonable.

%
%

\begin{remark}
	Consider the Euclidean space $\R^{nk}$ with the weighted inner product: for all $x,y\in\R^{nk}$,
$\langle x,y\rangle=x^TQy$, where $Q\in\R_{nk\times nk}$ is a positive definite symmetric
matrix. Then for a linear operator (i.e., a matrix in $\R_{nk\times p}$) $A:\R^{nk}\to\R^p$,
the orthogonal projection onto $\im(A)$ is $A(A^TQA)^{\sharp}A^TQ$.
In this paper, we consider the convetional inner product, that is, the case that $Q=I_{nk}$.
While the inner product considered in \cite{Candogan2011FlowsDecompositionofGames}
is with
$$
Q=\diag\left(\underbrace{k_1,\dots,k_1}_{k},\underbrace{k_2,\dots,k_2}_{k},\dots,\underbrace{k_n,\dots,k_n}_{k}\right).
$$
Hence the (pure) harmonic games considered in this paper are a little bit different from the ones
considered in \cite{Candogan2011FlowsDecompositionofGames}.
Despite of this difference, we can still prove that Harmonic games of this paper also have the
uniformly mixed strategy profiles as mixed Nash equilibria.
\end{remark}

\section{Orthogonal projections onto subspaces of finite games}
\label{sec:ProjontoGameSubsapce}

Based on the above preliminaries, we are ready to show the orthogonal projections
onto subspaces of finite games in the framework of STP.


\subsection{Subspaces of nonstrategic games}

Let us define some notations. Part of these notations 
for the first time appear in \cite{Cheng2014PotentialGame}.

Define

\begin{equation}
	\begin{split}
		k^{[p,q]}:=\left\{
		\begin{array}[]{ll}
			\prod_{j=p}^{q}k_j, &\text{ if } q\ge p,\\
			1, &\text{ if } q<p,
		\end{array}\right.
	\end{split}
	\label{eqn:game_kpq}
\end{equation}

\begin{equation}
	\begin{split}
		E_i :=& I_{k^{[1,i-1]}}\otimes {\bf1}_{k_i}\otimes I_{k^{[i+1,n]}}
		\in\R_{k\times \frac{k}{k_i}},\\
		{\bf e}_i :=& E_iE_i^T=I_{k^{[1,i-1]}}\otimes {\bf1}_{k_i\times
		k_i}\otimes I_{k^{[i+1,n]}}
		\in\R_{k\times k}, \\& i\in[1,n].
	\end{split}
	\label{eqn:game_E_i}
\end{equation}

It is easy to verify the following proposition.

\begin{proposition}\label{prop_EMPinverse}
	For all $i\in[1,n]$,
	\begin{equation}
		E_i^{\dag}=\frac{1}{k_i}E_i^T,\quad E_iE_i^{\dag}=\frac{1}{k_i}{\bf e}_i.
	\end{equation}
\end{proposition}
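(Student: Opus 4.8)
The plan is to check directly that $X := \frac{1}{k_i}E_i^T$ satisfies the four defining equations of the Moore-Penrose inverse, namely $E_iXE_i=E_i$, $XE_iX=X$, $(E_iX)^T=E_iX$ and $(XE_i)^T=XE_i$. Since by Proposition \ref{prop_MoorePenroseinverse} the Moore-Penrose inverse is unique, verifying these four identities identifies $E_i^{\dag}=\frac{1}{k_i}E_i^T$. The second claimed identity $E_iE_i^{\dag}=\frac{1}{k_i}{\bf e}_i$ is then immediate from the definition ${\bf e}_i=E_iE_i^T$, since $E_iE_i^{\dag}=E_i\cdot\frac{1}{k_i}E_i^T=\frac{1}{k_i}E_iE_i^T$.

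Everything rests on the single computation $E_i^TE_i=k_iI_{k/k_i}$. First I would transpose the Kronecker form of $E_i$ to obtain $E_i^T=I_{k^{[1,i-1]}}\otimes{\bf 1}_{k_i}^T\otimes I_{k^{[i+1,n]}}$, and then apply the mixed-product rule $(A\otimes B)(C\otimes D)=(AC)\otimes(BD)$ for the Kronecker product together with the scalar identity ${\bf 1}_{k_i}^T{\bf 1}_{k_i}=k_i$. This yields $E_i^TE_i=I_{k^{[1,i-1]}}\otimes k_i\otimes I_{k^{[i+1,n]}}=k_iI_{k/k_i}$; geometrically, the columns of $E_i$ are pairwise orthogonal, each of squared Euclidean norm $k_i$.

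With $E_i^TE_i=k_iI_{k/k_i}$ in hand, the remaining verifications are mechanical. Observe first that $XE_i=\frac{1}{k_i}E_i^TE_i=I_{k/k_i}$, which instantly gives $E_iXE_i=E_i(XE_i)=E_i$, then $XE_iX=(XE_i)X=X$, and the symmetry $(XE_i)^T=I_{k/k_i}^T=XE_i$. For the last condition, $E_iX=\frac{1}{k_i}E_iE_i^T=\frac{1}{k_i}{\bf e}_i$ is symmetric because ${\bf e}_i=E_iE_i^T$ is a Gram matrix, so $(E_iX)^T=E_iX$. The only place demanding any care is the Kronecker bookkeeping in the computation of $E_i^TE_i$; once that tensor factorization is carried out correctly there is no further obstacle, which is precisely why the statement can be labelled as easy to verify.
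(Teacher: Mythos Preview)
Your proof is correct and is exactly the direct verification the paper has in mind; the paper itself omits the proof, stating only that the proposition ``is easy to verify.'' Your identification of the key computation $E_i^TE_i=k_iI_{k/k_i}$ via the mixed-product rule is the natural (and essentially only) way to carry this out.
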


Define

\begin{equation}
	\begin{split}
		B_N:=E_1\oplus\cdots\oplus E_n\in\R_{(nk)\times(\sum_{i=1}^{n}\frac
		{k}{k_i})},
	\end{split}
	\label{eqn:game_B_N}
\end{equation}
then the following proposition holds.
\begin{proposition}\label{prop_BNMPinverse}
	\begin{equation}
		B_N^{\dag}=E_1^{\dag}\oplus\cdots\oplus E_n^{\dag},\quad B_NB_N^{\dag}=
\frac{1}{k_1}{\bf e}_1\oplus\cdots\oplus\frac{1}{k_n}{\bf e}_n.
	\end{equation}
\end{proposition}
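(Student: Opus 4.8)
The plan is to verify directly that the block-diagonal matrix $X := E_1^{\dag}\oplus\cdots\oplus E_n^{\dag}$ satisfies the four defining equations of the Moore-Penrose inverse of $B_N$, and then to invoke uniqueness (Proposition \ref{prop_MoorePenroseinverse}) to conclude $B_N^{\dag}=X$. The key structural observation is that the product of two block-diagonal matrices with matching block sizes is again block-diagonal and is computed blockwise, while the transpose of a block-diagonal matrix is the block-diagonal of the transposes. Note also that the dimensions are compatible, since $E_i\in\R_{k\times\frac{k}{k_i}}$ and the candidate block $E_i^{\dag}\in\R_{\frac{k}{k_i}\times k}$; hence all the products below are ordinary matrix products (so the STP convention causes no issue), and each of the four Penrose conditions for the pair $(B_N,X)$ decouples into the corresponding condition for each single pair $(E_i,E_i^{\dag})$.

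Concretely, first I would record that
$$B_NX = (E_1E_1^{\dag})\oplus\cdots\oplus(E_nE_n^{\dag}), \qquad XB_N = (E_1^{\dag}E_1)\oplus\cdots\oplus(E_n^{\dag}E_n).$$
From these, block-diagonal multiplication gives $B_NXB_N = (E_1E_1^{\dag}E_1)\oplus\cdots\oplus(E_nE_n^{\dag}E_n) = E_1\oplus\cdots\oplus E_n = B_N$, and likewise $XB_NX=X$, using only that each $E_i^{\dag}$ is the Moore-Penrose inverse of $E_i$. For the two symmetry conditions, I would note that each $E_iE_i^{\dag}=\frac{1}{k_i}{\bf e}_i$ and each $E_i^{\dag}E_i=\frac{1}{k_i}E_i^TE_i$ is symmetric, so the block-diagonal matrices $B_NX$ and $XB_N$ are symmetric as well. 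This establishes all four defining equations, and uniqueness then yields the first claimed identity $B_N^{\dag}=E_1^{\dag}\oplus\cdots\oplus E_n^{\dag}$.

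The second identity is then immediate: block-diagonal multiplication gives $B_NB_N^{\dag} = (E_1E_1^{\dag})\oplus\cdots\oplus(E_nE_n^{\dag})$, and substituting $E_iE_i^{\dag}=\frac{1}{k_i}{\bf e}_i$ from Proposition \ref{prop_EMPinverse} produces exactly $\frac{1}{k_1}{\bf e}_1\oplus\cdots\oplus\frac{1}{k_n}{\bf e}_n$.

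I do not expect any genuine obstacle here; the proof is a routine blockwise reduction. The only point deserving a moment's care is confirming that the block sizes line up so that every block-diagonal product is well-defined and evaluated entrywise per block, which holds by the stated dimensions of $E_i$ and $E_i^{\dag}$. Everything else reduces to the already-established facts about the single blocks $E_i$ recorded in Proposition \ref{prop_EMPinverse}.
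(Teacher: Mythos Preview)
Your proposal is correct and matches the paper's approach: the paper states this proposition without proof, treating it as an immediate consequence of the block-diagonal structure of $B_N$ together with Proposition~\ref{prop_EMPinverse}, and your blockwise verification of the four Penrose conditions is exactly the routine check that justifies this.
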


It follows directly that each $E_i$ is of full column rank, hence so is $B_N$, i.e.,
	\begin{equation}
		\rank(B_N)=\sum_{i=1}^{n}\frac{k}{k_i}.
		\label{eqn:rankBN}
	\end{equation}

%
%

From the results in \cite{Candogan2011FlowsDecompositionofGames}, nonstrategic games
are exactly the games such that the payoff of each player does not depend on the strategy played
by the player himself/herself.
Then their formal definition is obtained as below.

\begin{theorem}\label{thm7:game_decomposition}
	The nonstrategic games are exactly the games $(N,S,c)$ in $\Gr_{[n;k_1,\dots,k_n]}$
	satisfying that
	\begin{equation}\begin{split}
	 	&\forall i\in[1,n], \forall y\in S^i,
		\forall s\in S^{-i}, \\&
		\frac{1}{k_i}\sum_{x\in S^i}c_i(x,s)-
		c_i(y,s)=0.
		\end{split}
		\label{eqn:game_Nonstrategicgame}
	\end{equation}
\end{theorem}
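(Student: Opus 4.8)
The plan is to prove the equivalence by translating ``nonstrategic'' into the payoff-function identity (\ref{eqn:game_Nonstrategicgame}). I would take as the starting point the characterization recalled immediately above the theorem and attributed to \cite{Candogan2011FlowsDecompositionofGames}: a game $(N,S,c)$ is nonstrategic exactly when, for every player $i$, the payoff $c_i$ is independent of player $i$'s own strategy, i.e. $c_i(x,s)=c_i(y,s)$ for all $x,y\in S^i$ and all $s\in S^{-i}$. The theorem then reduces, player by player, to showing that this coordinate-independence is equivalent to the averaging identity (\ref{eqn:game_Nonstrategicgame}).

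For the ``only if'' direction I would fix $i$ and $s\in S^{-i}$: if $c_i(x,s)$ takes a common value over all $x\in S^i$, then $\frac{1}{k_i}\sum_{x\in S^i}c_i(x,s)$ is the mean of $k_i$ equal numbers and so equals that common value, which is $c_i(y,s)$ for every $y$; this is precisely (\ref{eqn:game_Nonstrategicgame}). For the ``if'' direction I would read (\ref{eqn:game_Nonstrategicgame}) as $c_i(y,s)=\frac{1}{k_i}\sum_{x\in S^i}c_i(x,s)$, whose right-hand side depends only on $i$ and $s$; hence $c_i(y,s)$ is constant in $y$, which is exactly the coordinate-independence.

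To phrase this inside the matrix framework of the paper I would pass to structure vectors. Independence of $c_i$ from player $i$'s strategy is the membership $(V^c_i)^T\in\im(E_i)$, because the columns of $E_i=I_{k^{[1,i-1]}}\otimes{\bf1}_{k_i}\otimes I_{k^{[i+1,n]}}$ span exactly those vectors in $\R^k$ that are constant along coordinate $i$ (under $S^i\sim\Delta_{k_i}$). Since $E_iE_i^{\dag}$ is the orthogonal projection onto $\im(E_i)$, this membership is equivalent to $E_iE_i^{\dag}(V^c_i)^T=(V^c_i)^T$, and by Proposition \ref{prop_EMPinverse} the left-hand side equals $\frac{1}{k_i}{\bf e}_i(V^c_i)^T$. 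Writing $\frac{1}{k_i}{\bf e}_i(V^c_i)^T=(V^c_i)^T$ entrywise reproduces (\ref{eqn:game_Nonstrategicgame}), since ${\bf e}_i=I_{k^{[1,i-1]}}\otimes{\bf1}_{k_i\times k_i}\otimes I_{k^{[i+1,n]}}$ acts by summing $c_i$ over player $i$'s strategies and broadcasting the result.

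I do not expect a genuine computational obstacle: each direction is a one-line averaging argument. The only points needing care are bookkeeping the componentwise action of $\frac{1}{k_i}{\bf e}_i$ against the mean $\frac{1}{k_i}\sum_{x\in S^i}c_i(x,s)$, and being explicit about what is quoted from \cite{Candogan2011FlowsDecompositionofGames} versus derived here. If one preferred a self-contained route avoiding that citation, one could instead start from ``every strategy profile is a pure Nash equilibrium'': applying the equilibrium inequality with the roles of two strategies $x,y\in S^i$ interchanged forces $c_i(x,s)=c_i(y,s)$, recovering the same coordinate-independence and hence (\ref{eqn:game_Nonstrategicgame}).
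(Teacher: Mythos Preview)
Your proposal is correct and matches the paper's approach: the paper does not give a separate proof of this theorem at all, but simply states it as the immediate reformulation of the characterization from \cite{Candogan2011FlowsDecompositionofGames} that $c_i$ is independent of player $i$'s own strategy. Your averaging argument makes explicit the one-line step the paper leaves to the reader, and your matrix translation via $\frac{1}{k_i}{\bf e}_i$ is precisely the computation the paper carries out in the proof of the next theorem (Theorem~\ref{thm12:game_decomposition}).
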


In the framework of STP, Theorem \ref{thm7:game_decomposition} can be represented as
the following Theorem \ref{thm12:game_decomposition}.

\begin{theorem}\label{thm12:game_decomposition}
	Consider the finite game space $\Gr_{[n;k_1,\dots,k_n]}$.
	The nonstrategic subspace is
	\begin{equation}
		\begin{split}
			\mathcal{N} &= \im(B_N) = \im(E_1\oplus\cdots\oplus E_n)\\
			&= \im(B_NB_N^{\dag}) = \im(E_1E_1^{\dag}\oplus\cdots\oplus
			E_nE_n^{\dag}) \\
			&= \im\left(\frac{1}{k_1}{\bf e}_1\oplus\cdots\oplus
			\frac{1}{k_n}{\bf e}_n\right),
		\end{split}
		\label{}
	\end{equation}
	and satisfies
	\begin{equation}
		\dim(\Na)=\sum_{i=1}^n{\frac{k}{k_i}}.
		\label{eqn:dim_Na}
	\end{equation}
\end{theorem}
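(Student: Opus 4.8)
The plan is to reduce everything to the characterization of nonstrategic games already obtained in Theorem \ref{thm7:game_decomposition}, and then to run the chain of image equalities mechanically using the Moore--Penrose identities collected in Section \ref{sec:preli} together with the explicit formulas of Propositions \ref{prop_EMPinverse} and \ref{prop_BNMPinverse}.

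First I would reinterpret the defining condition \eqref{eqn:game_Nonstrategicgame}. Rewriting it as $c_i(y,s)=\frac{1}{k_i}\sum_{x\in S^i}c_i(x,s)$ for all $y\in S^i$ and $s\in S^{-i}$, the right-hand side is independent of $y$; conversely, if $c_i(y,s)$ does not depend on $y$, then averaging over $y$ returns the same value. Hence \eqref{eqn:game_Nonstrategicgame} holds iff each payoff $c_i$ is constant in its own ($i$-th) argument. Next I would translate ``$c_i$ is constant in its $i$-th argument'' into a statement about the structure vector $(V^c_i)^T\in\R^k$. Recall from \eqref{1.2} that $(V^c_i)^T$ lists the values of $c_i$ over all strategy profiles in the mixed-radix order imposed by the STP, and that $E_i=I_{k^{[1,i-1]}}\otimes{\bf 1}_{k_i}\otimes I_{k^{[i+1,n]}}$ has exactly the Kronecker pattern that duplicates a coordinate across the $k_i$ values of the $i$-th index. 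Concretely, a vector lies in $\im(E_i)$ iff its entries are constant as the $i$-th strategy index ranges over $S^i$ with the remaining indices held fixed. Therefore $c_i$ is constant in its $i$-th argument iff $(V^c_i)^T\in\im(E_i)$.

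I expect this identification of $\im(E_i)$ with the space of ``functions constant in the $i$-th slot'' to be the main obstacle, since it requires matching the block-Kronecker structure of $E_i$ against the ordering convention of $\ltimes$; once stated cleanly, it is immediate. With it in hand, the assembly is routine. Since $(V^c_G)^T=(V^c_1,\dots,V^c_n)^T$ stacks the blocks $(V^c_i)^T$ and $B_N=E_1\oplus\cdots\oplus E_n$ is block diagonal, membership $(V^c_G)^T\in\im(B_N)$ is equivalent to $(V^c_i)^T\in\im(E_i)$ for every $i$. Combining the three steps yields $\mathcal{N}=\im(B_N)=\im(E_1\oplus\cdots\oplus E_n)$, the first two displayed expressions.

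The remaining equalities follow by substitution: $\im(B_N)=\im(B_NB_N^{\dag})$ by Proposition \ref{prop_linearspace}; $B_NB_N^{\dag}=E_1E_1^{\dag}\oplus\cdots\oplus E_nE_n^{\dag}$ by Proposition \ref{prop_BNMPinverse}; and $E_iE_i^{\dag}=\frac{1}{k_i}{\bf e}_i$ by Proposition \ref{prop_EMPinverse}, which delivers the final expression $\im\left(\frac{1}{k_1}{\bf e}_1\oplus\cdots\oplus\frac{1}{k_n}{\bf e}_n\right)$. Finally, $\dim(\mathcal{N})=\dim(\im(B_N))=\rank(B_N)=\sum_{i=1}^n\frac{k}{k_i}$ by \eqref{eqn:rankBN}, establishing \eqref{eqn:dim_Na} and completing the argument.
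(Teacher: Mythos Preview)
Your proposal is correct and follows essentially the same approach as the paper: both reduce to Theorem \ref{thm7:game_decomposition}, translate the averaging condition \eqref{eqn:game_Nonstrategicgame} into a linear-subspace membership via the Kronecker structure of the $E_i$, and then chain the remaining equalities using Propositions \ref{prop_linearspace}, \ref{prop_BNMPinverse}, \ref{prop_EMPinverse} and \eqref{eqn:rankBN}. The only cosmetic difference is that the paper establishes $\mathcal{N}=\im\left(\frac{1}{k_1}{\bf e}_1\oplus\cdots\oplus\frac{1}{k_n}{\bf e}_n\right)$ directly via an explicit STP computation showing $u_i\in\Na_i\Leftrightarrow \frac{1}{k_i}u_i{\bf e}_i=u_i$, whereas you first identify $\im(E_i)$ with ``functions constant in the $i$-th slot'' to get $\mathcal{N}=\im(B_N)$ and then descend; both orderings are equivalent.
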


\begin{proof}
	By Propositions \ref{prop_linearspace} and \ref{prop_BNMPinverse}, we only need to prove
	$\Na=\im\left(\frac{1}{k_1}{\bf e}_1\oplus\cdots\oplus\frac{1}{k_n}{\bf e}_n\right)$. Then
	\eqref{eqn:dim_Na} naturally holds by \eqref{eqn:rankBN}.
	It is easy to verify that for all $i\in[1,n]$, $\frac{1}{k_i}{\bf e}_i$ are symmetric,
	and $(\frac{1}{k_i}{\bf e}_i)^2=\frac{1}{k_i}{\bf e}_i$,
	hence $\left(\frac{1}{k_1}{\bf e}_1\oplus\cdots\oplus\frac{1}{k_n}{\bf e}_n\right)$ is an orthogonal projection.


	Arbitrarily choose a game
	$u=(u_1,\dots,u_n)^T\in\Gr_{[n;k_1,\dots,k_n]}$, where each $u_i^T$ belongs to $\R^{k}$.
	From \eqref{eqn:game_Nonstrategicgame},
	one has $u\in\Na$ iff, for all $i\in[1,n]$, all $X\in\Dt_{k^{[1,i-1]}}$, all $y\in\Dt_{k_i}$,
	all $Y\in\Dt_{k^{[i+1,n]}}$, $\frac{1}{k_i}\sum_{z\in\Dt_{k_i}}u_iXzY=\frac{1}{k_i}u_iX(\sum_{z\in\Dt_{k_i}}z)Y
	=\frac{1}{k_i}u_iX{\bf1}_{k_i}Y=\frac{1}{k_i}u_i(X\otimes{\bf1}_{k_i}\otimes Y)=u_iXyY=u_i(X\otimes y\otimes Y)$
	iff, for all $i\in[1,n]$, $\frac{1}{k_i}u_i(I_{k^{[1,i-1]}}\otimes {\bf1}_{k_i\times k_i}\otimes
	I_{k^{[i+1,n]}})=u_i\left(\frac{1}{k_i}{\bf e}_{i}\right)=u_i(I_{k^{[1,i-1]}}\otimes {I}_{k_i}\otimes
	I_{k^{[i+1,n]}})=u_i$ iff, $u\in\\\im\left(\frac{1}{k_1}{\bf e}_1\oplus\cdots\oplus\frac{1}{k_n}{\bf e}_n\right)$.
	Hence $\Na=\im(B_N)$.

\end{proof}

Next we characterize the projection of a game onto $\Na$.
By Theorem \ref{thm12:game_decomposition}, the following theorem holds.
\begin{theorem}\label{thm19:game_decomposition}
	The projection of a game $(N,S,c)\in\Gr_{[n;k_1,\dots,k_n]}$ onto the nonstrategic subspace
	$\Na$ is $(N,S,c')$, where $c=\{c_1,\dots,c_n\}$, $c'=\{c_1',\dots,c_n'\}$,
	for all $i\in[1,n]$, all $x\in S^i$, all $y\in S^{-i}$, $c_i'(x,y)=\frac{1}{k_i}\sum_{z\in S^i}c_i(z,y)$.
\end{theorem}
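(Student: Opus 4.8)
The plan is to read the projection straight off the matrix already identified in Theorem \ref{thm12:game_decomposition}, without ever recomputing a Moore-Penrose inverse. By Proposition \ref{prop_linearspace}, $B_NB_N^{\dag}$ is the orthogonal projection of $\R^{nk}$ onto $\im(B_N)=\Na$, and by Proposition \ref{prop_BNMPinverse} it equals the block-diagonal matrix $P_{\Na}:=\frac{1}{k_1}{\bf e}_1\oplus\cdots\oplus\frac{1}{k_n}{\bf e}_n$. Hence the projection of the game $(N,S,c)$, whose structure vector is the stacked column $(V^c_G)^T$ with blocks $(V^c_i)^T\in\R^{k}$, is obtained simply by applying $P_{\Na}$ to $(V^c_G)^T$.

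First I would exploit the block-diagonal form to isolate one player: the $i$-th block of $P_{\Na}(V^c_G)^T$ is $\frac{1}{k_i}{\bf e}_i(V^c_i)^T$, so the projected game $(N,S,c')$ has structure vectors $(V^{c'}_i)^T=\frac{1}{k_i}{\bf e}_i(V^c_i)^T$, equivalently $V^{c'}_i=\frac{1}{k_i}V^c_i{\bf e}_i$ since ${\bf e}_i$ is symmetric. It then remains only to translate this single-block identity back into payoff values, i.e., to show that $V^{c'}_i$ evaluated at a pure profile with player $i$ playing $x$ and the others playing $y$ returns $\frac{1}{k_i}\sum_{z\in S^i}c_i(z,y)$.

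This last step is the same tensor bookkeeping already performed inside the proof of Theorem \ref{thm12:game_decomposition}. Writing ${\bf e}_i=I_{k^{[1,i-1]}}\otimes{\bf1}_{k_i\times k_i}\otimes I_{k^{[i+1,n]}}$ and feeding it the pure profile $X\otimes x\otimes Y$ with $X\in\Dt_{k^{[1,i-1]}}$, $x\in\Dt_{k_i}$, $Y\in\Dt_{k^{[i+1,n]}}$, the central factor ${\bf1}_{k_i\times k_i}$ sends $x$ to ${\bf1}_{k_i}=\sum_{z\in\Dt_{k_i}}z$, so that ${\bf e}_i(X\otimes x\otimes Y)=\sum_{z\in\Dt_{k_i}}X\otimes z\otimes Y$. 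Multiplying by $\frac{1}{k_i}V^c_i$ then yields $\frac{1}{k_i}\sum_{z\in S^i}c_i(z,y)$, independent of $x$, which is exactly $c_i'(x,y)$. The only point needing care is purely notational, namely keeping the lexicographic ordering of strategy profiles aligned with the $I$--${\bf1}_{k_i\times k_i}$--$I$ placement so that the Kronecker sum averages over player $i$'s own strategies and nothing else; there is no analytic obstacle, since the projection matrix is already fully explicit.
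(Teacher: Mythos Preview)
Your proposal is correct and follows essentially the same approach as the paper: identify the projection as the block-diagonal matrix $\frac{1}{k_1}{\bf e}_1\oplus\cdots\oplus\frac{1}{k_n}{\bf e}_n$, reduce to a single block $\frac{1}{k_i}V^c_i{\bf e}_i$, and then use the Kronecker structure of ${\bf e}_i$ to see that evaluating at a pure profile replaces player $i$'s strategy by the average over $S^i$. The only difference is cosmetic---you cite Propositions \ref{prop_linearspace} and \ref{prop_BNMPinverse} explicitly and reference the bookkeeping already done in Theorem \ref{thm12:game_decomposition}, whereas the paper rewrites that tensor computation in full.
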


\begin{proof}
	Consider the vector form of game $(N,S,c)$, i.e., $u^T=(u_1,\dots,u_n)^T\in\R^{nk}$, where each
	$u_i^T$ belongs to $\R^n$. Then the projection of $u^T$ onto $\Na$, i.e., $(\frac{1}{k_1}u_1{\bf e}_{1},
	\dots,\frac{1}{k_n}u_n{\bf e}_{n})^T$ satisfies that for all $i\in[1,n]$, all $X=\ltimes_{j=1}^nX_j\in\Dt_{k}$,
	$X^{[1,i-1]}:=\ltimes_{j=1}^{i-1}X_j$, $X^{[j+1,n]}:=\ltimes_{j=i+1}^{n}X_j$,
	\begin{equation}
		\begin{split}
			&\frac{1}{k_i}u_i{\bf e}_{i} X\\
			=&\frac{1}{k_i}u_i\left(I_{k^{[1,i-1]}}\otimes{\bf1}_{k_i\times k_i}\otimes I_{k^{[i+1,n]}}\right)\\
			&\left(X^{[1,i-1]}\ltimes X_i\ltimes X^{[i+1,n]}\right)\\
			=&\frac{1}{k_i}u_i\left(I_{k^{[1,i-1]}}\otimes{\bf1}_{k_i\times k_i}\otimes I_{k^{[i+1,n]}}\right)\\
			&\left(X^{[1,i-1]}\otimes X_i\otimes X^{[i+1,n]}\right)\\
			=&\frac{1}{k_i}u_i\left(\left(I_{k^{[1,i-1]}}X^{[1,i-1]}\right)\otimes\left({\bf1}_{k_i\times k_i}X_i\right)\otimes\right.\\
			&\left. \left(I_{k^{[i+1,n]}}X^{[i+1,n]}\right)\right)\\
			=&\frac{1}{k_i}u_i \left( X^{[1,i-1]} \otimes {\bf1}_{k_i}
			\otimes X^{[i+1,n]}\right)\\
			=&\frac{1}{k_i}u_i \left( X^{[1,i-1]} \ltimes \left(\sum_{z\in\Dt_{k_i}}z\right)
			\ltimes X^{[i+1,n]}\right)\\
			=&\frac{1}{k_i}\sum_{z\in\Dt_{k_i}}u_i \left( X^{[1,i-1]} \ltimes z
			\ltimes X^{[i+1,n]}\right).
		\end{split}
		\label{}
	\end{equation}
	That is, the conclusion holds.
\end{proof}

\subsection{Subspaces of potential games}

In \cite{Monderer1996PotentialGames}\label{thm11:game_decomposition},
potential games are defined as the games $(N,S,c)$
in $\Gr_{[n;k_1,\dots,k_n]}$ satisfying
	\begin{equation}
		\begin{split}
			&\exists \phi:S\to\R, \forall i\in[1,n],
			\forall x,y\in S^i,\forall z\in S^{-i},\\
			&c_i(x,z)-c_i(y,z)=\phi(x,z)-\phi(y,z),
		\end{split}
		\label{eqn:game_Potentialgames}
	\end{equation}
where $\phi$ is called potential function.

From this definition, nonstrategic games are exactly the potential games that have constant potential functions.
For every nonstrategic game, every strategy profile is a pure Nash equilibrium.


Necessary notations are given as follows.
Regard $2^{[1,n]}$ as an index set, for all $N_s\subset [1,n]$,
\begin{equation}
	\begin{split}
		{\bf e}_{N_s} :=\left\{
		\begin{array}[]{ll}
			\prod_{i\in N_s}{\bf e}_i, &\text{ if } N_s\ne\emptyset,\\
			I_k, & \text{otherwise}.
		\end{array}\right.
	\end{split}
	\label{eqn:game_eNs}
\end{equation}

Then
\begin{equation}
	\begin{split}
		{\bf e}_{N_s}=A_1\otimes A_2\otimes\cdots\otimes A_n,
	\end{split}
	\label{}
\end{equation}
where
\begin{equation}
	A_i = \left\{
	\begin{array}[]{ll}
		I_{k_i}, &\text{ if }i\notin N_s,\\
		{\bf1}_{k_i\times k_i}, &\text{ if }i\in N_s.
	\end{array}
	\right.
	\label{}
\end{equation}

Given a finite game $(V_G^c)^T=(V_1^c,\dots,V_n^c)^T\in\R^{nk}$,
where each $(V_i^c)^T$ belongs to $\R^k$,
the corresponding potential equation constructed in \cite{Cheng2014PotentialGame} is
\begin{equation}
	\begin{bmatrix}
		-E_1 & E_2 & 0 & \cdots & 0\\
		-E_1 & 0 & E_3 & \cdots & 0\\
		\vdots &\vdots &\vdots & \ddots & \vdots \\
		-E_1 & 0 & 0 & \cdots & E_n
	\end{bmatrix}\begin{bmatrix}
		\xi_1\\ \vdots\\ \xi_n
	\end{bmatrix}=\begin{bmatrix}
		(V_2^c)^T-(V_1^c)^T \\\vdots\\ (V_n^c)^T-(V_1^c)^T
	\end{bmatrix},
	\label{eqn:potentialequation}
\end{equation}
where $E_1,\dots,E_n$ are defined in \eqref{eqn:game_E_i},
for each $i\in[1,n]$, $\xi_i\in\R^{\frac{k}{k_i}}$.
It is proved in \cite{Cheng2014PotentialGame} that game $V_G^c$ is potential iff Eqn.
\eqref{eqn:potentialequation} has solution, and if Eqn. \eqref{eqn:potentialequation}
has a solution $\xi_1^*,\dots,\xi_n^*$, then $V_1^c-(\xi_1^*)^TE_1^T$ is the structure
vector of a potential function of $V_G^c$.

Putting a free vector $y\in\R^k$ into Eqn. \eqref{eqn:potentialequation}, Eqn. \eqref{eqn:potentialequation}
is equivalent to
\begin{equation}
	\begin{bmatrix}
		I_k & 0 & 0 & 0 & \cdots & 0\\
		0 & -E_1 & E_2 & 0 & \cdots & 0\\
		0 & -E_1 & 0 & E_3 & \cdots & 0\\
		\vdots & \vdots &\vdots &\vdots & \ddots & \vdots \\
		0 & -E_1 & 0 & 0 & \cdots & E_n
	\end{bmatrix}\begin{bmatrix}
		y\\\xi_1\\ \vdots\\ \xi_n
	\end{bmatrix}=\begin{bmatrix}
		y\\(V_2^c)^T-(V_1^c)^T \\\vdots\\ (V_n^c)^T-(V_1^c)^T
	\end{bmatrix},
	\label{eqn:potentialequation1}
\end{equation}
equivalent to
\begin{equation}
	\begin{bmatrix}
		I_k & 0 & 0 & 0 & \cdots & 0\\
		I_k & -E_1 & E_2 & 0 & \cdots & 0\\
		I_k & -E_1 & 0 & E_3 & \cdots & 0\\
		\vdots & \vdots &\vdots &\vdots & \ddots & \vdots \\
		I_k & -E_1 & 0 & 0 & \cdots & E_n
	\end{bmatrix}\begin{bmatrix}
		y\\\xi_1\\ \vdots\\ \xi_n
	\end{bmatrix}=\begin{bmatrix}
		y\\y+(V_2^c)^T-(V_1^c)^T \\\vdots\\y+ (V_n^c)^T-(V_1^c)^T
	\end{bmatrix},
	\label{eqn:potentialequation2}
\end{equation}
and equivalent to
\begin{equation}\begin{split}
	&\begin{bmatrix}
		I_k & E_1  & 0 & 0 & \cdots & 0\\
		I_k & 0 & E_2 & 0 & \cdots & 0\\
		I_k & 0 & 0 & E_3 & \cdots & 0\\
		\vdots & \vdots &\vdots &\vdots & \ddots & \vdots \\
		I_k & 0 & 0 & 0 & \cdots & E_n
	\end{bmatrix}\begin{bmatrix}
		y-E_1\xi_1\\\xi_1\\ \vdots\\ \xi_n
	\end{bmatrix}\\
	=&\begin{bmatrix}
		y\\y+(V_2^c)^T-(V_1^c)^T \\\vdots\\y+ (V_n^c)^T-(V_1^c)^T
	\end{bmatrix}.
	\end{split}
	\label{eqn:potentialequation3}
\end{equation}

Substituting $y=(V_1^c)^T$ into Eqn. \eqref{eqn:potentialequation3}, one has
\begin{equation}
	\begin{bmatrix}
		I_k & E_1  & 0 & 0 & \cdots & 0\\
		I_k & 0 & E_2 & 0 & \cdots & 0\\
		I_k & 0 & 0 & E_3 & \cdots & 0\\
		\vdots & \vdots &\vdots &\vdots & \ddots & \vdots \\
		I_k & 0 & 0 & 0 & \cdots & E_n
	\end{bmatrix}\begin{bmatrix}
		(V_1^c)^T-E_1\xi_1\\\xi_1\\ \vdots\\ \xi_n
	\end{bmatrix}=\begin{bmatrix}
		(V_1^c)^T\\(V_2^c)^T \\\vdots\\ (V_n^c)^T
	\end{bmatrix},
	\label{eqn:potentialequation4}
\end{equation}
hereinafter
\begin{equation}
	B_P:=\begin{bmatrix}
		I_k & E_1  & 0 & 0 & \cdots & 0\\
		I_k & 0 & E_2 & 0 & \cdots & 0\\
		I_k & 0 & 0 & E_3 & \cdots & 0\\
		\vdots & \vdots &\vdots &\vdots & \ddots & \vdots \\
		I_k & 0 & 0 & 0 & \cdots & E_n
	\end{bmatrix}
	\in\R_{(nk)\times (k+\sum_{i=1}^{n}\frac{k}{k_i})}.
	\label{eqn:game_Bp}
\end{equation}

Then from Eqn. \eqref{eqn:potentialequation4}, a game $V_G^c$ is potential
iff $(V_G^c)^T$ belongs to $\im(B_P)$. \cite[Lemma 22]{Cheng2014PotentialGame}
shows that
$\rank\begin{bmatrix}
		-E_1 & E_2 & 0 & \cdots & 0\\
		-E_1 & 0 & E_3 & \cdots & 0\\
		\vdots &\vdots &\vdots & \ddots & \vdots \\
		-E_1 & 0 & 0 & \cdots & E_n
\end{bmatrix}=\sum_{i=1}^n\frac{k}{k_i}-1$.
Then the following theorem holds.

\begin{theorem}\label{thm:game_Potentialgamesubspace}
	Consider the finite game space $\Gr_{[n;k_1,\dots,k_n]}$.
	The potential subspace is $\Gr_P=\im(B_P)$,
	and satisfies that
	$\dim(\Gr_P)=k+\sum_{i=1}^{n}\frac{k}{k_i}-1$.
\end{theorem}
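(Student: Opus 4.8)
The plan is to establish the two assertions separately: the identity $\Gr_P=\im(B_P)$, and then the rank formula $\dim(\Gr_P)=\rank(B_P)=k+\sum_{i=1}^{n}\frac{k}{k_i}-1$.

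For the first assertion I would simply collect the chain of equivalences already derived. By definition $\Gr_P$ is the set of potential games in $\Gr_{[n;k_1,\dots,k_n]}$, and by \cite{Cheng2014PotentialGame} a game $V_G^c$ is potential iff the potential equation \eqref{eqn:potentialequation} is solvable. Since \eqref{eqn:potentialequation} was shown to be equivalent in turn to \eqref{eqn:potentialequation1}, \eqref{eqn:potentialequation2}, \eqref{eqn:potentialequation3}, and finally, after substituting $y=(V_1^c)^T$, to \eqref{eqn:potentialequation4}, solvability of \eqref{eqn:potentialequation} is equivalent to the existence of a vector that $B_P$ maps to $(V_G^c)^T$, i.e.\ to $(V_G^c)^T\in\im(B_P)$. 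Identifying games with their (transposed) structure vectors via \eqref{1.202} then gives $\Gr_P=\im(B_P)$ at once.

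For the dimension, since $\dim(\Gr_P)=\dim(\im(B_P))=\rank(B_P)$, it suffices to compute $\rank(B_P)$, and here I would invoke \cite[Lemma 22]{Cheng2014PotentialGame}. Subtracting the first block row of $B_P$ (see \eqref{eqn:game_Bp}) from each of the remaining $n-1$ block rows is rank-preserving and brings $B_P$ into the block form
$$\begin{bmatrix} I_k & E_1 & 0 & \cdots & 0\\ 0 & -E_1 & E_2 & \cdots & 0\\ \vdots & \vdots & \vdots & \ddots & \vdots\\ 0 & -E_1 & 0 & \cdots & E_n\end{bmatrix}=\begin{bmatrix} I_k & * \\ 0 & M\end{bmatrix},$$
where $M$ is exactly the coefficient matrix of \eqref{eqn:potentialequation}. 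Because the top-left block is $I_k$ and the bottom-left block is $0$, column operations using the first $k$ columns clear the block $*$ without disturbing $M$, whence $\rank(B_P)=k+\rank(M)$. By \cite[Lemma 22]{Cheng2014PotentialGame}, $\rank(M)=\sum_{i=1}^{n}\frac{k}{k_i}-1$, giving the claimed dimension.

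I expect no genuine obstacle, only bookkeeping: one must check that the listed system transformations are honest reversible equivalences and that the block reduction isolates precisely the matrix $M$ of Lemma 22. A self-contained alternative for the dimension would compute $\ker(B_P)$ directly: $B_P[z,\xi_1,\dots,\xi_n]^T=0$ forces $E_1\xi_1=\cdots=E_n\xi_n=-z\in\bigcap_{i=1}^{n}\im(E_i)=\Span({\bf 1}_k)$, a one-dimensional space, and full column rank of each $E_i$ then determines each $\xi_i$ uniquely, so $\dim\ker(B_P)=1$ and $\rank(B_P)=\bigl(k+\sum_{i=1}^{n}\frac{k}{k_i}\bigr)-1$. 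This route avoids Lemma 22 but requires the intersection identity $\bigcap_{i=1}^{n}\im(E_i)=\Span({\bf 1}_k)$, which is the only substantive step in it.
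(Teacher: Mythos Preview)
Your proposal is correct and follows essentially the same approach as the paper: the paper derives $\Gr_P=\im(B_P)$ from the chain of equivalences culminating in \eqref{eqn:potentialequation4}, and for the dimension simply cites \cite[Lemma~22]{Cheng2014PotentialGame} for $\rank(M)=\sum_{i=1}^n\frac{k}{k_i}-1$ and asserts the theorem, leaving the block reduction $\rank(B_P)=k+\rank(M)$ implicit, which you have spelled out. Your self-contained alternative via $\dim\ker(B_P)=1$ is not in the paper but is a nice bonus.
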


By Theorem \ref{thm:game_Potentialgamesubspace} and Proposition \ref{prop_linearspace},
$B_P(B_P)^{\dag}$ is the orthogonal projection onto $\Gr_P$.
Later we will give polynomial representation for $B_P(B_P)^{\dag}$. This part is much more difficult
than the case of nonstrategic games, and we will finish it in next section.
For a potential game $V_G^c$, using Eqn. \eqref{eqn:potentialequation4},
in next section we will also find polynomial representation for its potential functions.
Before proceeding it, we turn to the study of pure harmonic games.

\subsection{Subspaces of pure harmonic games}

The following theorem follows from Theorem \ref{thm:game_Potentialgamesubspace}.

\begin{theorem}\label{prop_game_Purepotentialsubspace}
	Consider the finite game space $\Gr_{[n;k_1,\dots,k_n]}$.
	The pure harmonic subspace is $\mathcal{H}=\im(B_P)^{\bot}$,
	and satisfies $\dim(\mathcal{H})=(n-1)k-\sum_{i=1}^{n}\frac{k}{k_i}+1$.
\end{theorem}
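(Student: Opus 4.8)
The plan is to deduce both assertions directly from Theorem \ref{thm:game_Potentialgamesubspace} together with the orthogonal decomposition of the game space. Recall that $\R^{nk}$ splits as the orthogonal direct sum $\mathcal{P}\oplus\Na\oplus\mathcal{H}$ and that the potential subspace satisfies $\Gr_P=\mathcal{P}\oplus\Na$. Since the pure harmonic subspace $\mathcal{H}$ is orthogonal to both $\mathcal{P}$ and $\Na$, it is orthogonal to their sum $\Gr_P$; and because the three subspaces together exhaust $\R^{nk}$, a dimension count forces $\mathcal{H}$ to be the \emph{entire} orthogonal complement of $\Gr_P$, i.e.\ $\mathcal{H}=\Gr_P^{\bot}$. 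Invoking $\Gr_P=\im(B_P)$ from Theorem \ref{thm:game_Potentialgamesubspace} then gives the first assertion $\mathcal{H}=\im(B_P)^{\bot}$.

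For the dimension I would use that $\R^{nk}=\Gr_P\oplus\mathcal{H}$ is an orthogonal splitting, so $\dim(\mathcal{H})=nk-\dim(\Gr_P)$. Substituting $\dim(\Gr_P)=k+\sum_{i=1}^{n}\frac{k}{k_i}-1$ from Theorem \ref{thm:game_Potentialgamesubspace} yields $\dim(\mathcal{H})=nk-k-\sum_{i=1}^{n}\frac{k}{k_i}+1=(n-1)k-\sum_{i=1}^{n}\frac{k}{k_i}+1$, exactly as claimed. No computation beyond this codimension arithmetic is required.

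The single point deserving care is the choice of inner product: the decomposition of \cite{Candogan2011FlowsDecompositionofGames} is stated for their weighted inner product, whereas here everything is with respect to the conventional one $\langle x,y\rangle=x^Ty$. I would therefore stress that $\Gr_P$ and $\Na$ are subspaces intrinsic to the games (independent of the inner product), while $\mathcal{H}$ is, in the present setting, taken to be the orthogonal complement of $\Gr_P$ under $\langle\cdot,\cdot\rangle=x^Ty$; with this understanding the identity $\mathcal{H}=\im(B_P)^{\bot}$ holds essentially by definition of the orthogonal complement combined with $\Gr_P=\im(B_P)$. Given this, I do not anticipate any genuine obstacle — the statement is a short consequence of the previously established value of $\dim(\Gr_P)$.
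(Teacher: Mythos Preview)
Your proposal is correct and matches the paper's approach: the paper simply states that this theorem ``follows from Theorem \ref{thm:game_Potentialgamesubspace}'' without further detail, and your argument---taking the orthogonal complement of $\Gr_P=\im(B_P)$ and subtracting $\dim(\Gr_P)$ from $nk$---is exactly the intended one-line deduction. Your remark on the inner-product convention is apt and consistent with the paper's own comment that its pure harmonic subspace differs slightly from that of \cite{Candogan2011FlowsDecompositionofGames}.
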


By Proposition \ref{prop_linearspace}, the following Proposition
\ref{prop_game_Pureharmonicsubspace} holds.

\begin{proposition}\label{prop_game_Pureharmonicsubspace}
	$\im(B_P)=\im(B_P(B_P)^{\dag})$, $\im(B_P)^{\bot}=\ker((B_P)^T)=\im(I_{nk}-B_PB_P^{\dag})$.
\end{proposition}

Then from Theorem \ref{prop_game_Purepotentialsubspace},
Propositions \ref{prop_linearspace} and \ref{prop_game_Pureharmonicsubspace},
the following theorem follows.

\begin{theorem}\label{thm5:game_decomposition}
	\begin{equation}
		\begin{split}
			\Ha =& \ker({\bf1}_n^T\otimes I_k)\cap
			\ker\left(\frac{1}{k_1}{\bf e}_1\oplus\cdots\oplus
			\frac{1}{k_n}{\bf e}_n\right)\\
			=& \im\left(I_{nk}-\frac{1}{n}\left(
			{\bf1}_{n\times n}\otimes I_k\right)\right) \cap \\&
			\im\left(\left(I_k-\frac{1}{k_1}{\bf e}_1\right)\oplus\cdots\oplus
			\left(I_k-\frac{1}{k_n}{\bf e}_n\right)\right),\\
		\end{split}
		\label{eqn:game_decomposition2}
	\end{equation}
	where ${\bf e}_1,\dots,{\bf e}_n$ are defined in \eqref{eqn:game_E_i}.
\end{theorem}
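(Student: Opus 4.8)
The plan is to start from the identification $\Ha=\im(B_P)^{\bot}=\ker(B_P^T)$, which is already supplied by Theorem \ref{prop_game_Purepotentialsubspace} together with Proposition \ref{prop_game_Pureharmonicsubspace}, and then to unpack the single condition $B_P^Tx=0$ into the two separate membership conditions appearing in \eqref{eqn:game_decomposition2}. Writing a general element as $x=(x_1,\dots,x_n)^T$ with each block $x_i\in\R^{k}$, and reading off the column blocks of $B_P$ from \eqref{eqn:game_Bp}, the first column block is ${\bf1}_n\otimes I_k$ (the stacked copies of $I_k$), while the remaining $i$-th block is $E_i$ sitting in row block $i$. Hence $B_P^Tx=0$ is equivalent to the pair of requirements $({\bf1}_n^T\otimes I_k)x=\sum_{i=1}^{n}x_i=0$ together with $E_i^Tx_i=0$ for every $i\in[1,n]$.

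The first requirement is literally $x\in\ker({\bf1}_n^T\otimes I_k)$. For the second, I would invoke Proposition \ref{prop_linearspace}, which gives $\ker(E_i^T)=\ker(E_iE_i^T)$; since $E_iE_i^T={\bf e}_i$ by \eqref{eqn:game_E_i}, the condition $E_i^Tx_i=0$ is the same as $\frac{1}{k_i}{\bf e}_ix_i=0$. Collecting these across $i$ yields $x\in\ker\left(\frac{1}{k_1}{\bf e}_1\oplus\cdots\oplus\frac{1}{k_n}{\bf e}_n\right)$, which establishes the first displayed equality.

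For the second displayed equality I would exhibit both matrices inside the kernels as orthogonal projections and then apply the elementary fact that $\ker(Q)=\im(I-Q)$ whenever $Q$ is an orthogonal projection. The block-diagonal matrix $\frac{1}{k_1}{\bf e}_1\oplus\cdots\oplus\frac{1}{k_n}{\bf e}_n$ is already known to be an orthogonal projection from the proof of Theorem \ref{thm12:game_decomposition}, so its kernel equals $\im\left(\left(I_k-\frac{1}{k_1}{\bf e}_1\right)\oplus\cdots\oplus\left(I_k-\frac{1}{k_n}{\bf e}_n\right)\right)$. For $\frac{1}{n}({\bf1}_{n\times n}\otimes I_k)$ I would verify symmetry and idempotency directly, using ${\bf1}_{n\times n}^2=n{\bf1}_{n\times n}$, and note that it equals $({\bf1}_n\otimes I_k)\left(({\bf1}_n^T\otimes I_k)({\bf1}_n\otimes I_k)\right)^{-1}({\bf1}_n^T\otimes I_k)$ because $({\bf1}_n^T\otimes I_k)({\bf1}_n\otimes I_k)=nI_k$; this is precisely the orthogonal projection onto $\im({\bf1}_n\otimes I_k)=\im(({\bf1}_n^T\otimes I_k)^T)$, whose kernel is therefore $\im\left(I_{nk}-\frac{1}{n}({\bf1}_{n\times n}\otimes I_k)\right)$.

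I do not expect a serious obstacle; the proof is essentially bookkeeping layered on top of the already-available structural facts. The one point deserving care is in the first paragraph: correctly matching the column blocks of $B_P$ to the orthogonality conditions, and justifying the passage from $\ker(E_i^T)$ to $\ker({\bf e}_i)$ via Proposition \ref{prop_linearspace} rather than silently assuming $E_i^Tx_i=0\Leftrightarrow{\bf e}_ix_i=0$. Everything else reduces to the projection identity $\ker(Q)=\im(I-Q)$ and the known descriptions of $E_i$, ${\bf e}_i$, and $B_P$.
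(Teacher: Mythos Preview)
Your proposal is correct and follows exactly the route the paper indicates: the paper simply asserts that the theorem follows from Theorem~\ref{prop_game_Purepotentialsubspace}, Proposition~\ref{prop_linearspace}, and Proposition~\ref{prop_game_Pureharmonicsubspace}, and what you have written is a faithful unpacking of that assertion. Your care in passing from $\ker(E_i^T)$ to $\ker({\bf e}_i)$ via Proposition~\ref{prop_linearspace}, and in verifying that $\frac{1}{n}({\bf1}_{n\times n}\otimes I_k)$ is the orthogonal projection onto $\im({\bf1}_n\otimes I_k)$, fills in precisely the details the paper leaves implicit.
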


From Theorem \ref{thm5:game_decomposition}, Theorem
\ref{thm6:game_decomposition} 
which follows shows a direct definition for pure harmonic games.

\begin{theorem}\label{thm6:game_decomposition}
	The pure harmonic games are exactly the games $(N,S,c)$
	in $\Gr_{[n;k_1,\dots,k_n]}$
	satisfying the following two conditions
	\begin{eqnarray}
			\forall s\in S,\quad \sum_{i=1}^n{c_i(s)=0},
			\label{eqn:game_Pureharmonicgame1}\\
			\forall i\in[1,n], \forall y\in S^{-i},\quad
			\sum_{x\in S^i}c_i(x,y)=0.\label{eqn:game_Pureharmonicgame4}
	\end{eqnarray}
\end{theorem}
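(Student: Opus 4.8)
The plan is to read both defining conditions directly off the two kernel descriptions of $\Ha$ supplied by Theorem \ref{thm5:game_decomposition}, namely $\Ha=\ker({\bf1}_n^T\otimes I_k)\cap\ker\left(\frac{1}{k_1}{\bf e}_1\oplus\cdots\oplus\frac{1}{k_n}{\bf e}_n\right)$, after identifying a game with its structure vector. First I would write a game $(N,S,c)$ as $u^T=(u_1,\dots,u_n)^T\in\R^{nk}$, where each $u_i=V_i^c$ is the structure vector of $c_i$, so that under the identification $S\sim\Dt_k$ the $s$-entry of $u_i^T$ equals $c_i(s)$. Membership $u^T\in\Ha$ is then equivalent to the conjunction of $({\bf1}_n^T\otimes I_k)u^T=0$ and $\left(\frac{1}{k_1}{\bf e}_1\oplus\cdots\oplus\frac{1}{k_n}{\bf e}_n\right)u^T=0$, and it remains to translate each into payoff language.

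For the first kernel, I would observe that ${\bf1}_n^T\otimes I_k$ is the $k\times nk$ block row $[I_k\ \cdots\ I_k]$ consisting of $n$ copies of $I_k$, so that $({\bf1}_n^T\otimes I_k)u^T=\sum_{i=1}^n u_i^T$. This vanishes iff $\sum_{i=1}^n u_i^T=0$, which read entrywise over $s\in S$ is exactly $\sum_{i=1}^n c_i(s)=0$ for every $s$, that is, condition \eqref{eqn:game_Pureharmonicgame1}.

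For the second kernel, the block-diagonal structure decouples the single condition into the $n$ separate conditions $\frac{1}{k_i}{\bf e}_i u_i^T=0$, equivalently ${\bf e}_i u_i^T=0$, one for each $i\in[1,n]$. Here I would reuse verbatim the computation already carried out in the proof of Theorem \ref{thm19:game_decomposition}: since ${\bf e}_i$ is symmetric, ${\bf e}_i u_i^T=0$ iff $u_i{\bf e}_i=0$ iff $u_i{\bf e}_i X=0$ for every $X\in\Dt_k$, and writing $X=X^{[1,i-1]}\ltimes X_i\ltimes X^{[i+1,n]}$ that proof shows $u_i{\bf e}_i X=\sum_{z\in\Dt_{k_i}}u_i(X^{[1,i-1]}\ltimes z\ltimes X^{[i+1,n]})$. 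Because this quantity is independent of $X_i$, requiring it to vanish for all $X$ is the same as requiring $\sum_{x\in S^i}c_i(x,y)=0$ for all $y\in S^{-i}$, i.e. condition \eqref{eqn:game_Pureharmonicgame4}. Combining the two equivalences yields $u^T\in\Ha$ iff both \eqref{eqn:game_Pureharmonicgame1} and \eqref{eqn:game_Pureharmonicgame4} hold.

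I expect the only delicate point to be the index bookkeeping in the second kernel, namely matching ${\bf e}_i u_i^T=0$ against the scalar sum $\sum_{x\in S^i}c_i(x,y)$ over precisely the $i$-th coordinate while keeping $s^{-i}$ fixed; but since this is a direct transcription of the identity already established in Theorem \ref{thm19:game_decomposition}, no new idea is required beyond careful transposition and the symmetry of ${\bf e}_i$.
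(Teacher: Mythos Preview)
Your proposal is correct and follows essentially the same route as the paper: both start from the kernel description in Theorem~\ref{thm5:game_decomposition}, translate $({\bf1}_n^T\otimes I_k)u^T=0$ into \eqref{eqn:game_Pureharmonicgame1}, and translate the block-diagonal kernel condition into \eqref{eqn:game_Pureharmonicgame4} via an STP computation. The only cosmetic difference is that the paper reduces ${\bf e}_iu_i^T=0$ to $u_iE_i=0$ (using that $E_i$ has full column rank) and evaluates that directly, whereas you invoke the identity for $u_i{\bf e}_iX$ established in Theorem~\ref{thm19:game_decomposition}; the underlying calculation is the same.
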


\begin{proof}
	We use Theorem \ref{thm5:game_decomposition} to prove it.
	Arbitrarily choose a game
	$u=(u_1,\dots,u_n)^T\in\Gr_{[n;k_1,\dots,k_n]}$, where each $u_i^T$ belongs to $\R^{k}$.

	$u\in\ker({\bf1}_n^T\otimes I_k)$ iff, $\sum_{i=1}^nu_i^T=0$ iff, \eqref{eqn:game_Pureharmonicgame1} holds.

	$u\in\ker\left(\frac{1}{k_1}{\bf e}_1\oplus\cdots\oplus\frac{1}{k_n}{\bf e}_n\right)$ iff,
	for all $i\in[1,n]$, ${\bf e}_iu_i^T=E_iE_i^Tu_i^T=0$ iff, for all $i\in[1,n]$, $u_iE_i=0$ iff,
	for all $i\in[1,n]$, $0=u_i(I_{k^{[1,i-1]}}\otimes{\bf1}_{k_i}\otimes I_{k^{[i+1,n]}})$ iff,
	for all $i\in[1,n]$, all $X\in\Dt_{k^{[1,i-1]}}$, all $Y\in\Dt_{k^{[i+1,n]}}$, $u_i(X\otimes{\bf1}_{k_i}\otimes Y)=
	u_i(X\ltimes{\bf1}_{k_i}\ltimes Y)=
	u_iX(\sum_{y\in\Dt_{k_i}}y)Y=\sum_{y\in\Dt_{k_i}}u_iXyY=0$ iff, \eqref{eqn:game_Pureharmonicgame4} holds.
%
\end{proof}

Note that finite games satisfying \eqref{eqn:game_Pureharmonicgame4} are called
{\it normalized} in \cite{Candogan2011FlowsDecompositionofGames},
and the subspace of normalized games is the orthogonal complement of
the nonstrategic subspace.

\cite[Proposition 5.1]{Candogan2011FlowsDecompositionofGames} shows that ``Harmonic
 games generically do not
have pure Nash equilibria.''
Next we prove a stronger result,
i.e., the following Theorem \ref{thm16:game_decomposition}. Note that the proofs
of \cite[Proposition 5.1]{Candogan2011FlowsDecompositionofGames} and the following
Theorem \ref{thm16:game_decomposition} are essentially different.
The following Proposition \ref{thm13:game_decomposition} will be used in Theorem
\ref{thm16:game_decomposition}.

\begin{proposition}
	Consider the finite game space $\Gr_{[n;k_1,\dots,k_n]}$.
	A pure harmonic game $(N,S,c)\in\Ha$ possesses a pure Nash equilibrium
	$s^*=(s_1^*,\dots,s_n^*)\in S$ iff
	for all $i\in[1,n]$, all $s\in S^i$, $c_i(s_1^*,\dots,s_{i-1}^*,s,s_{i+1}^*,\dots,s_n^*)=0$.
	\label{thm13:game_decomposition}
\end{proposition}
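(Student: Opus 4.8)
The plan is to translate both sides of the claimed equivalence into statements about the single-player payoff slices at $s^*$, and then exploit the two defining conditions of pure harmonic games recorded in Theorem \ref{thm6:game_decomposition}. Fix the candidate profile $s^*=(s_1^*,\dots,s_n^*)$ and, for each player $i$, define the slice $f_i\colon S^i\to\R$ by $f_i(s)=c_i(s_1^*,\dots,s_{i-1}^*,s,s_{i+1}^*,\dots,s_n^*)$. By the definition of a pure Nash equilibrium, $s^*$ is an equilibrium exactly when $s_i^*$ maximizes $f_i$ over $S^i$ for every $i$, i.e. $f_i(s)\le f_i(s_i^*)$ for all $s\in S^i$. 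The right-hand side of the claimed equivalence is simply $f_i\equiv 0$ for all $i$.

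The ``if'' direction is immediate: if $f_i(s)=0$ for all $i$ and all $s\in S^i$, then in particular $f_i(s)=f_i(s_i^*)=0$, so the equilibrium inequalities hold with equality and $s^*$ is a pure Nash equilibrium.

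For the ``only if'' direction I would combine the two harmonic conditions. First, applying the normalization condition \eqref{eqn:game_Pureharmonicgame4} with $y=(s_1^*,\dots,s_{i-1}^*,s_{i+1}^*,\dots,s_n^*)$ gives $\sum_{s\in S^i}f_i(s)=0$ for each $i$. Writing $M_i:=f_i(s_i^*)$ for the maximal slice value, the equilibrium inequality $f_i(s)\le M_i$ together with the zero-sum $\sum_{s\in S^i}f_i(s)=0$ forces $M_i\ge 0$, since the maximum of $k_i$ reals whose average is $0$ cannot be negative. Next, applying the other harmonic condition \eqref{eqn:game_Pureharmonicgame1} at the profile $s^*$ itself yields $\sum_{i=1}^n M_i=\sum_{i=1}^n c_i(s^*)=0$. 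As each $M_i\ge 0$ and they sum to $0$, every $M_i=0$. Finally, with $M_i=0$ the slice satisfies $f_i(s)\le 0$ for all $s$, and again $\sum_{s\in S^i}f_i(s)=0$ with all summands nonpositive forces $f_i(s)=0$ for every $s\in S^i$, which is exactly the desired conclusion.

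The only real subtlety is recognizing that neither harmonic condition suffices on its own: the normalization \eqref{eqn:game_Pureharmonicgame4} controls each player's slice sum but not its sign, while the across-players zero-sum \eqref{eqn:game_Pureharmonicgame1} only constrains the value at $s^*$. The argument hinges on coupling them through the sign bound $M_i\ge 0$, so that the single global relation $\sum_i M_i=0$ collapses all the per-player maxima to zero simultaneously. Everything else is elementary arithmetic about finite lists of reals, and no machinery beyond Theorem \ref{thm6:game_decomposition} and the definition of a pure Nash equilibrium is needed.
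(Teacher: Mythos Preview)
Your proof is correct and follows essentially the same route as the paper's own argument: both use the normalization condition \eqref{eqn:game_Pureharmonicgame4} to deduce $c_i(s^*)\ge 0$ from the Nash inequalities, then invoke the zero-sum condition \eqref{eqn:game_Pureharmonicgame1} at $s^*$ to force each $c_i(s^*)=0$, and finally reapply \eqref{eqn:game_Pureharmonicgame4} to collapse all slice values to zero. Your notation $f_i$, $M_i$ makes the structure slightly more transparent, but the logical skeleton is identical.
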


\begin{proof}
	The ``if'' part holds naturally. Next we prove the ``only if'' part.
	
	From the definition of pure Nash equilibrium, for all $i\in[1,n]$, all $s\in S^i$,
	$c_i(s_1^*,\dots,s_{i-1}^*,s,s_{i+1}^*,\dots,s_n^*)\le c_i(s^*)$.
	Then from \eqref{eqn:game_Pureharmonicgame4}, for all $i\in[1,n]$,
	$0=\sum_{t\in S^i}c_i(s_1^*,\dots,\\s_{i-1}^*,t,s_{i+1}^*,\dots,s_n^*)\le k_i c_i(s^*)$.
	From \eqref{eqn:game_Pureharmonicgame1}, for all $i\in[1,n]$, $c_i(s^*)=0$.
	Then for all $i\in[1,n]$, all $s\in S^{i}$, $c_i(s_1^*,\dots,s_{i-1}^*,s,s_{i+1}^*,\dots,s_n^*)\le 0$.
	Again from \eqref{eqn:game_Pureharmonicgame4},
	for all $i\in[1,n]$, all $s\in S^{i}$, $c_i(s_1^*,\dots,s_{i-1}^*,s,s_{i+1}^*,\dots,s_n^*)= 0$.
\end{proof}


\begin{theorem}
	Pure harmonic games generically do not have pure Nash equilibria.
	\label{thm16:game_decomposition}
\end{theorem}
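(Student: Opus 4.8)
The plan is to show that the set of pure harmonic games possessing at least one pure Nash equilibrium is contained in a finite union of proper linear subspaces of $\Ha$, hence is negligible (Lebesgue-null and nowhere dense); its complement is then the generic set asserted by the theorem.

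First I would fix a strategy profile $s^*=(s_1^*,\dots,s_n^*)\in S$ and consider the set $\Ha_{s^*}$ of those pure harmonic games $(N,S,c)\in\Ha$ for which $s^*$ is a pure Nash equilibrium. By Proposition \ref{thm13:game_decomposition}, $s^*$ is a pure Nash equilibrium of a pure harmonic game exactly when $c_i(s_1^*,\dots,s_{i-1}^*,s,s_{i+1}^*,\dots,s_n^*)=0$ for all $i\in[1,n]$ and all $s\in S^i$. Each of these is a linear condition on the structure vector $u\in\Ha\subset\R^{nk}$, so $\Ha_{s^*}$ is the intersection of $\Ha$ with a linear subspace and is therefore itself a linear subspace of $\Ha$. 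Since $S$ is finite, the set of pure harmonic games with some pure Nash equilibrium is $\bigcup_{s^*\in S}\Ha_{s^*}$, a finite union of subspaces.

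The reduction is then to prove that each $\Ha_{s^*}$ is a \emph{proper} subspace of $\Ha$. Granting this, I would invoke two standard facts over the infinite field $\R$: a finite union of proper subspaces of a space of positive dimension is a proper subset, and is both Lebesgue-null and nowhere dense. Positivity of $\dim\Ha$ in the nondegenerate case follows from Theorem \ref{prop_game_Purepotentialsubspace}; when $\dim\Ha=0$ the subspace is trivial and the statement is vacuous. Hence the complement of $\bigcup_{s^*}\Ha_{s^*}$ is dense and of full measure, which is exactly the genericity claim.

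The crux, and the step I expect to be the main obstacle, is exhibiting for each $s^*$ a single pure harmonic game lying outside $\Ha_{s^*}$. My plan is an explicit matching-pennies construction. One checks from Theorem \ref{prop_game_Purepotentialsubspace} that $\dim\Ha>0$ forces at least two players, say $1$ and $2$, to have $k_1,k_2\ge2$; pick $b_1\in S^1$ with $b_1\ne s_1^*$ and $b_2\in S^2$ with $b_2\ne s_2^*$. Define $c_1$ to equal $+1$ at $(s_1^*,s_2^*,s_3^*,\dots,s_n^*)$ and at $(b_1,b_2,s_3^*,\dots,s_n^*)$, to equal $-1$ at $(s_1^*,b_2,s_3^*,\dots,s_n^*)$ and at $(b_1,s_2^*,s_3^*,\dots,s_n^*)$, and to be $0$ elsewhere; then set $c_2=-c_1$ and $c_i\equiv0$ for $i\ge3$. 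A direct check shows that on every affected line of the strategy lattice the two nonzero entries cancel, so \eqref{eqn:game_Pureharmonicgame1} and \eqref{eqn:game_Pureharmonicgame4} hold and the game lies in $\Ha$ by Theorem \ref{thm6:game_decomposition}; yet $c_1(s_1^*,\dots,s_n^*)=1\ne0$, so by Proposition \ref{thm13:game_decomposition} the profile $s^*$ is not a pure Nash equilibrium. Thus this game lies in $\Ha\setminus\Ha_{s^*}$, proving $\Ha_{s^*}\subsetneq\Ha$ and closing the argument. The delicate point is verifying that the localized perturbation respects \eqref{eqn:game_Pureharmonicgame4} for \emph{every} line, including those through the auxiliary players $j\ge3$, which is precisely why the cancellation is arranged antisymmetrically within one $2\times2$ block and the remaining payoffs are set to zero.
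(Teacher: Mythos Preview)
Your proof is correct and shares the paper's overarching strategy: for each profile $s^*$, the set $\Ha_{s^*}$ of pure harmonic games having $s^*$ as a pure Nash equilibrium is, by Proposition~\ref{thm13:game_decomposition}, a linear subspace of $\Ha$, and a finite union of proper subspaces is Lebesgue-null. The difference lies entirely in how properness of $\Ha_{s^*}$ is established. The paper writes $\Ha_{s^*}$ as the kernel of a matrix $G_{s^*}$ obtained by stacking $B_P^T$ on top of block-diagonal rows built from $F_i^T=(\dt_{k^{[1,i-1]}}^1)^T\otimes I_{k_i}\otimes(\dt_{k^{[i+1,n]}}^1)^T$, and then argues that these appended rows, read as games in $\R^{nk}$, are not potential and hence lie outside the row space of $B_P^T$; this forces $\rank(G_{s^*})>\rank(B_P)$ and therefore $\dim\Ha_{s^*}<\dim\Ha$. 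You instead exhibit an explicit witness in $\Ha\setminus\Ha_{s^*}$ via a matching-pennies block supported on a $2\times2$ grid in two coordinates, which is more elementary and self-contained: it sidesteps the rank bookkeeping and the auxiliary (unproved in the paper) claim that the single-entry payoff rows are non-potential. The paper's route, by contrast, keeps the argument inside the STP/linear-algebra framework developed for the projections and ties the result directly to the operator $B_P$.
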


\begin{proof}
	Consider the finite game space $\Gr_{[n;k_1,\dots,k_n]}$. For each strategy profile $s\in S$,
	denote $\Gr_{s}\subset \Ha$ by the set of pure harmonic games that have $s$ as a pure Nash equilibrium.
	By Proposition \ref{thm13:game_decomposition}, for every two games $(N,S,c'),(N,S,c'')$ both in $\Gr_s$,
	$(N,S,c'-c'')$ is also in $\Gr_s$. Then for each $s\in S$,
	$\Gr_s$ is a subspace of $\Ha$. Next we prove for each $s\in S$, $\dim(\Gr_s)
	<\dim(\Ha)$, then $\cup_{s\in S}\Gr_s$ is a measure $0$ subset of $\Ha$. That is, pure harmonic
	games generically do not have pure Nash equilibria.

	Without loss of generality, we assume that $s=\dt_{k}^1\in\Dt_{k}$.
	Choose an arbitrary pure harmonic game $u^T=(u_1,\dots,u_n)^T\in\Ha$, where each $u_i^T$ belongs to
	$\R^k$. Then by Proposition \ref{thm13:game_decomposition},
	$u^T$ has a pure Nash equilibrium $s$ iff, for all $i\in[1,n]$, all $x\in \Dt_{k_i}$,
	$0=u_i\dt_{k^{[1,i-1]}}^1 x \dt_{k^{[i+1,n]}}^1=u_i(\dt_{k^{[1,i-1]}}^1\otimes x \otimes \dt_{k^{[i+1,n]}}^1)$
	iff, for all $i\in[1,n]$, $0=u_i(\dt_{k^{[1,i-1]}}^1\otimes I_{k_i} \otimes \dt_{k^{[i+1,n]}}^1)$.
	Then by Theorem \ref{thm5:game_decomposition},
	\begin{equation}
		\begin{split}
			\Gr_{\dt_k^1} = \ker(G_{\dt_k^1}),
		\end{split}
		\label{}
	\end{equation}
	where  $$G_{\dt_k^1}:=\begin{bmatrix}
				I_k & \cdots & I_k\\
				E_1^T \\
				 & \ddots\\
				 &  & E_n^T\\
				F_1^T\\
				 & \ddots\\
				 & & F_n^T
			 \end{bmatrix}\in\R_{(k+\sum_{i=1}^k{\frac{k}{k_i}}+\sum_{i=1}^n{k_i})\times(nk)},$$
	$E_1,\dots,E_n$ are defined in \eqref{eqn:game_E_i},
	$F_i^T=(\dt_{k^{[1,i-1]}}^1)^T \otimes I_{k_i} \otimes (\dt_{k^{[i+1,n]}}^1)^T$, $i\in[1,n]$.

	Note that each of the last $\sum_{i=1}^n{k_i}$ rows of $G_{\dt_k^1}$ correspondes to a game
	that is not potential. Then $\rank(G_{\dt_k^1})>\rank(B_P)=\dim(\Po\oplus\Na)$.
	Hence $\dim(\Gr_{\dt_k^1})=nk-\rank(G_{\dt_k^1})<\dim(\Ha)=nk-\dim(\Po\oplus\Na)$.
\end{proof}

Theorem \ref{thm16:game_decomposition} naturally yields the following corollary.

\begin{corollary}\label{cor1:game_decomposition}
	A pure harmonic game in $\Gr_{[2;2,2]}$ has a pure Nash equilibrium iff it is trivial.
\end{corollary}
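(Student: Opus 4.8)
The plan is to specialize the general characterization of pure harmonic games and the pure-Nash-equilibrium criterion to the smallest nontrivial case $\Gr_{[2;2,2]}$. First I would recall that for $n=2$ and $k_1=k_2=2$, Theorem \ref{thm6:game_decomposition} says a game $(N,S,c)\in\Ha$ satisfies, for every strategy profile $s\in S$, $c_1(s)+c_2(s)=0$, together with the normalization condition \eqref{eqn:game_Pureharmonicgame4}, namely $\sum_{x\in S^i}c_i(x,y)=0$ for each player $i$ and each opponent profile $y\in S^{-i}$. Since each player has exactly two strategies, the normalization condition forces $c_i(1,y)=-c_i(2,y)$ for each fixed $y$. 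By Theorem \ref{prop_game_Purepotentialsubspace}, $\dim(\Ha)=(n-1)k-\sum_{i=1}^n\frac{k}{k_i}+1=1\cdot 4-(2+2)+1=1$, so $\Ha$ is one-dimensional; I would exhibit an explicit generator (the Matching-Pennies-type game) to make the subsequent analysis completely concrete.

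Next I would invoke Proposition \ref{thm13:game_decomposition}: a pure harmonic game $(N,S,c)$ has a pure Nash equilibrium $s^*=(s_1^*,s_2^*)$ if and only if, for every player $i$ and every strategy $s\in S^i$, the payoff $c_i$ with player $i$ deviating to $s$ while the opponent plays $s_{-i}^*$ equals zero. In the $2\times2$ setting this means, for the fixed opponent profile, that $c_i$ takes the value $0$ on both of player $i$'s strategies. Combined with the normalization constraint $c_i(1,y)=-c_i(2,y)$, which is automatically consistent with both values being zero, the condition $c_i(\cdot,s_{-i}^*)\equiv 0$ says that the entire row/column of the payoff indexed by the opponent's equilibrium choice vanishes.

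Then I would argue that imposing these vanishing conditions on the one-dimensional space $\Ha$ forces the unique (up to scalar) generator to have a zero coefficient, i.e., to be the zero game. Concretely, in the explicit generator the payoffs are nonzero on every strategy profile, so requiring $c_i(\cdot,s_{-i}^*)\equiv 0$ for the two players at any candidate $s^*$ is only possible when the scalar multiple defining the game is $0$. Hence any pure harmonic game in $\Gr_{[2;2,2]}$ possessing a pure Nash equilibrium must be the trivial (zero) game, and conversely the trivial game trivially has every profile as a pure Nash equilibrium, giving the stated equivalence. This is essentially the content of the dimension-counting argument in Theorem \ref{thm16:game_decomposition} made fully explicit: each choice of $s^*$ cuts out a proper subspace $\Gr_{s^*}$, and in this tiny example the intersection of $\Gr_{s^*}$ with the $1$-dimensional $\Ha$ collapses to $\{0\}$.

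The main obstacle, though it is minor here, is bookkeeping the correspondence between the abstract vanishing conditions from Proposition \ref{thm13:game_decomposition} and the concrete entries of a $2\times2$ payoff matrix, and verifying that the normalization and zero-sum constraints together with the equilibrium constraints really leave no room except the zero game. Since $\dim(\Ha)=1$, once I write down one nonzero generator and check that no pure profile can make both players' deviation payoffs vanish without scaling the generator to zero, the corollary follows immediately; the only care needed is to confirm that ``trivial'' is interpreted as the zero element of $\Ha$ (equivalently a game with all payoffs zero), for which every strategy profile is a pure Nash equilibrium.
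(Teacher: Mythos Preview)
Your proposal is correct, and the reasoning is sound: computing $\dim(\Ha)=1$, exhibiting the Matching-Pennies generator, and then applying Proposition~\ref{thm13:game_decomposition} to see that the vanishing conditions at any candidate $s^*$ kill the single free parameter is a complete argument.

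The paper takes a somewhat different route. Rather than working inside the one-dimensional $\Ha$ with an explicit generator, it reuses the matrix $G_s$ constructed in the proof of Theorem~\ref{thm16:game_decomposition} (which encodes simultaneously the pure-harmonic constraints and the Nash-equilibrium constraints for the profile $s$) and simply observes that in the $[2;2,2]$ case $\rank(G_s)=8=nk$ for every $s\in\{1,2\}\times\{1,2\}$, so $\Gr_s=\ker(G_s)=\{0\}$. Your approach is more self-contained and hands-on: it exploits the lucky accident that $\dim(\Ha)=1$ here, so one explicit nonzero example with no pure equilibrium settles everything without any rank computation. The paper's approach, by contrast, is a mechanical specialization of the general machinery already set up for Theorem~\ref{thm16:game_decomposition}; it does not need to know $\dim(\Ha)$ or to produce a generator, but it does ask the reader to accept (or verify) the rank claim. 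Both arguments are short; yours is arguably more transparent for this particular case, while the paper's fits more uniformly into the surrounding development.
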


\begin{proof}
	For this case, for all $s\in\{1,2\}\times\{1,2\}$, $\rank(G_{s})=8$, then
	$\dim(\Gr_{s})=8-\rank(G_{s})=0$. That is, the conclusion holds.
\end{proof}

\subsection{Subspaces of pure potential games}

Define
\begin{equation}
	P_N := \begin{bmatrix}
			I_k-\frac{1}{k_1}{\bf e}_1 \\ \vdots \\
			I_k-\frac{1}{k_n}{\bf e}_n
		\end{bmatrix}\in\R_{(nk)\times k},
	\label{eqn:PN}
\end{equation}
where ${\bf e}_1,\dots,{\bf e}_n$ are defined in \eqref{eqn:game_E_i}.

It follows that
\begin{eqnarray}
	P_N^TB_N=0,\label{eqn:game_Purepotential2}\\
	\begin{bmatrix}
	P_N,B_N
\end{bmatrix}=B_P\begin{bmatrix}
	I_k & \\
	-\frac{1}{k_1}E_1^T & I_{\frac{k}{k_1}} \\
	\vdots && \ddots\\
	-\frac{1}{k_n}E_n^T & && I_{\frac{k}{k_n}} \\
\end{bmatrix},
	\label{eqn:game_Purepotential3}
\end{eqnarray}
where $B_N$ is defined in \eqref{eqn:game_B_N},
and $E_1,\dots,E_n$ are defined in \eqref{eqn:game_E_i}.

In view of \eqref{eqn:game_Purepotential2} and
\eqref{eqn:game_Purepotential3}, the following theorem holds.
\begin{theorem}
	Consider the finite game space $\Gr_{[n;k_1,\dots,k_n]}$.
	The pure potential subspace is $\Po=\im(P_N)=\im(P_NP_N^{\dag})$, satisfies
	$\dim(\Po)=k-1$, and every $k-1$ distinct columns of $P_N$ is a basis of $\Po$.
	\label{thm18:game_decomposition}
\end{theorem}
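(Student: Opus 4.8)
The plan is to read everything off the two structural identities \eqref{eqn:game_Purepotential2} and \eqref{eqn:game_Purepotential3} together with the dimension counts already established for $\Gr_P$ and $\Na$. First I would show $\Po=\im(P_N)$. Denote by $M$ the right factor in \eqref{eqn:game_Purepotential3}; it is block lower-triangular with identity diagonal blocks $I_k,I_{\frac{k}{k_1}},\dots,I_{\frac{k}{k_n}}$, hence invertible, so $\im([P_N,B_N])=\im(B_PM)=\im(B_P)=\Gr_P$ by Theorem \ref{thm:game_Potentialgamesubspace}. Since the column space of a concatenation is the sum of the column spaces, $\Gr_P=\im(P_N)+\im(B_N)=\im(P_N)+\Na$. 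Identity \eqref{eqn:game_Purepotential2}, $P_N^TB_N=0$, says every column of $P_N$ is orthogonal to every column of $B_N$, so $\im(P_N)\bot\Na$; thus the sum is an orthogonal direct sum $\Gr_P=\im(P_N)\oplus\Na$. Taking $\Po=\Gr_P\cap\Na^{\bot}$ (the part of the potential subspace orthogonal to the nonstrategic subspace), the inclusion $\im(P_N)\subseteq\Po$ is immediate, and the reverse inclusion follows by writing any $v\in\Gr_P\cap\Na^{\bot}$ as $v=p+m$ with $p\in\im(P_N)$ and $m\in\Na$, then using $v,p\bot\Na$ to force $m=0$. Hence $\Po=\im(P_N)$, and $\im(P_N)=\im(P_NP_N^{\dag})$ by Proposition \ref{prop_linearspace}.

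For the dimension, since the decomposition $\Gr_P=\im(P_N)\oplus\Na$ is orthogonal, $\dim(\Po)=\dim(\Gr_P)-\dim(\Na)=\left(k+\sum_{i=1}^n\frac{k}{k_i}-1\right)-\sum_{i=1}^n\frac{k}{k_i}=k-1$, using Theorems \ref{thm:game_Potentialgamesubspace} and \ref{thm12:game_decomposition}.

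For the basis claim I would compute $\ker(P_N)$. A vector $x$ lies in $\ker(P_N)$ iff $\left(I_k-\frac{1}{k_i}{\bf e}_i\right)x=0$ for every $i$, i.e. iff $x$ is fixed by each orthogonal projection $\frac{1}{k_i}{\bf e}_i=E_iE_i^{\dag}$ onto $\im(E_i)$ (Proposition \ref{prop_EMPinverse}). A vector fixed by $\frac{1}{k_i}{\bf e}_i$ is exactly one that does not depend on the $i$-th strategy coordinate; being simultaneously independent of every coordinate forces $x$ to be constant, so $\ker(P_N)=\Span\{{\bf 1}_k\}$, of dimension $1$ (consistent with $\rank(P_N)=k-1$). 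Writing $P_N=[p_1,\dots,p_k]$, the relation ${\bf 1}_k\in\ker(P_N)$ reads $\sum_{j=1}^k p_j=0$, the unique-up-to-scale dependency among the columns, and all its coefficients are nonzero. Consequently, after deleting any single column, a dependency among the remaining $k-1$ columns would have to be a scalar multiple of ${\bf 1}_k$ having a zero entry, hence the zero vector; so any $k-1$ columns are linearly independent and, being $k-1$ vectors inside the $(k-1)$-dimensional space $\Po=\im(P_N)$, form a basis.

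The main obstacle is identifying $\ker(P_N)$ exactly, specifically the step that a vector independent of each coordinate separately is globally constant (the inclusion $\bigcap_i\im(E_i)\subseteq\Span\{{\bf 1}_k\}$); everything else is bookkeeping with the two displayed identities and the known dimensions. The observation that makes the ``every $k-1$ columns'' statement work is simply that the kernel generator ${\bf 1}_k$ has full support, so no genuine dependency can avoid any given column.
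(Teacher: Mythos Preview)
Your argument is correct and follows the paper's line almost verbatim: orthogonality from \eqref{eqn:game_Purepotential2}, the identity $\im([P_N,B_N])=\im(B_P)$ from the invertibility of the triangular factor in \eqref{eqn:game_Purepotential3}, and then the dimension count $\dim(\Po)=\dim(\Gr_P)-\dim(\Na)=k-1$.

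The one place you work harder than the paper is the basis claim. You identify $\ker(P_N)$ by the ``independent of every coordinate $\Rightarrow$ constant'' argument and flag it as the main obstacle. The paper sidesteps this entirely: once $\dim(\im(P_N))=k-1$ is established, rank--nullity gives $\dim(\ker(P_N))=1$, and the single observation $P_N{\bf 1}_k=0$ (each block $(I_k-\tfrac{1}{k_i}{\bf e}_i){\bf 1}_k={\bf 1}_k-{\bf 1}_k=0$) pins down $\ker(P_N)=\Span\{{\bf 1}_k\}$ without ever intersecting the $\im(E_i)$'s. From there your full-support argument finishes exactly as you wrote. So what you call the obstacle is not one; your direct computation of $\bigcap_i\im(E_i)$ is correct but unnecessary.
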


\begin{proof}
	Eqn. \eqref{eqn:game_Purepotential2} implies that $\im(P_N)\bot\Na$.
	Eqn. \eqref{eqn:game_Purepotential3} implies that $\im(P_N)\oplus\Na=\im(B_P)=\Gr_P$.
	Then $\im(P_N)=\Po$, and $\dim(\Po)=\dim(\Gr_P)-\dim(\Na)=k-1$.
	Note that the sum of all columns of $P_N$ are the zero vector of $\R^{nk}$, then
	every distinct $k-1$ vectors of $P_N$ is a basis of $\Po$.
\end{proof}

By Theorem \ref{thm18:game_decomposition}, $P_N(P_N)^{\dag}$ is the orthogonal projection
onto $\Po$.
Next we give a direct definition for pure potential games, i.e., the following Theorem
\ref{thm10:game_decomposition}. This theorem also shows that pure potential games are exactly
the potential games that are normalized.

\begin{theorem}\label{thm10:game_decomposition}
	The pure potential games are exactly the games $(N,S,c)$
	in $\Gr_{[n;k_1,\dots,k_n]}$
	satisfying Eqns. \eqref{eqn:game_Potentialgames} and
	\eqref{eqn:game_Pureharmonicgame4}.
\end{theorem}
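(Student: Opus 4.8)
The plan is to show that the pure potential subspace $\Po$ is precisely the intersection of the potential subspace $\Gr_P$ with the orthogonal complement $\Na^{\bot}$ of the nonstrategic subspace, and then to translate each of these two subspace memberships back into its payoff-function characterization. First I would recall that a game $(N,S,c)$ satisfies \eqref{eqn:game_Potentialgames} iff it is potential, iff its structure vector lies in $\im(B_P)=\Gr_P$, which is exactly what is recorded after Eqn.~\eqref{eqn:potentialequation4} together with Theorem~\ref{thm:game_Potentialgamesubspace}. Second I would recall, from the proof of Theorem~\ref{thm6:game_decomposition}, that a game satisfies the normalization condition \eqref{eqn:game_Pureharmonicgame4} iff its structure vector lies in $\ker(\frac{1}{k_1}{\bf e}_1\oplus\cdots\oplus\frac{1}{k_n}{\bf e}_n)$; since this operator is the symmetric idempotent orthogonal projection onto $\Na$ (Theorem~\ref{thm12:game_decomposition}), its kernel is exactly $\Na^{\bot}$. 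Thus the set of games satisfying both \eqref{eqn:game_Potentialgames} and \eqref{eqn:game_Pureharmonicgame4} is $\Gr_P\cap\Na^{\bot}$.

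It then remains to prove $\Po=\Gr_P\cap\Na^{\bot}$, and the key inputs are already supplied by Theorem~\ref{thm18:game_decomposition}: Eqn.~\eqref{eqn:game_Purepotential2} gives $\Po=\im(P_N)\bot\Na$, while Eqn.~\eqref{eqn:game_Purepotential3} gives the orthogonal direct sum $\Gr_P=\Po\oplus\Na$. The inclusion $\Po\subset\Gr_P\cap\Na^{\bot}$ is immediate: $\Po\subset\Gr_P$ by the direct sum, and $\Po\subset\Na^{\bot}$ by orthogonality. For the reverse inclusion I would take any $x\in\Gr_P\cap\Na^{\bot}$ and decompose $x=p+q$ with $p\in\Po$ and $q\in\Na$ using $\Gr_P=\Po\oplus\Na$; since $x\in\Na^{\bot}$ and $p\in\Po\subset\Na^{\bot}$, the difference $q=x-p$ lies in $\Na\cap\Na^{\bot}=\{0\}$, forcing $x=p\in\Po$.

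Combining the two steps yields that $(N,S,c)\in\Po$ iff it satisfies both \eqref{eqn:game_Potentialgames} and \eqref{eqn:game_Pureharmonicgame4}, which is the claim. I do not expect a genuine obstacle, since all the structural facts are in hand; the only point requiring care is the bookkeeping identification of the two payoff-function conditions with the two algebraic subspace conditions, and in particular confirming that the kernel of the nonstrategic projection equals $\Na^{\bot}$ rather than some proper subspace of it. The remark following Theorem~\ref{thm6:game_decomposition}, stating that normalized games form the orthogonal complement of $\Na$, is exactly what closes that gap.
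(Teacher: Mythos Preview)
Your proposal is correct and follows essentially the same route as the paper: both identify \eqref{eqn:game_Potentialgames} with membership in $\Gr_P=\Po\oplus\Na$ and \eqref{eqn:game_Pureharmonicgame4} with membership in $\Na^{\bot}$ (the paper writes this as $\Po\oplus\Ha$, which is the same subspace), and then prove $\Po=(\Po\oplus\Na)\cap\Na^{\bot}$ by decomposing an element of the intersection and showing its $\Na$-component vanishes. The paper carries out this last step by applying the projection $B_NB_N^{\dag}$, which is exactly your observation $q\in\Na\cap\Na^{\bot}=\{0\}$ in operator form.
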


\begin{proof}
	The subspace consisting of games satisfying \eqref{eqn:game_Potentialgames} are $\Gr_P=\Po\oplus\Na$,
	and the subspace consisting of games satisfying \eqref{eqn:game_Pureharmonicgame4} are $\Po\oplus\Ha$.

	To prove this theorem, we only need to prove
	\begin{equation}
		(\Po\oplus\Na)\cap(\Po\oplus\Ha)=\Po.
		\label{}
	\end{equation}

	$(\Po\oplus\Na)\cap(\Po\oplus\Ha)\supset\Po$ holds naturally.
	Next we prove $(\Po\oplus\Na)\cap(\Po\oplus\Ha)\subset\Po$.

	Arbitrarily given $u\in(\Po\oplus\Na)\cap(\Po\oplus\Ha)$.
	Since $u\in\Po\oplus\Na$, $u=u_P+u_N$, where $u_P\in\Po$, $u_N\in\Na$.
	On the other hand, $u\in\Po\oplus\Ha=\Na^{\bot}$, then
	$0=B_NB_N^{\dag}u=B_NB_N^{\dag}u_P+B_NB_N^{\dag}u_N=0+u_N$. Hence
	$u=u_P$. That is, $(\Po\oplus\Na)\cap(\Po\oplus\Ha)\subset\Po$.
\end{proof}

\subsection{Subspaces of harmonic games}

\begin{theorem}\label{thm8:game_decomposition}
	The harmonic games are exactly the games $(N,S,c)$ in $\Gr_{[n;k_1,\dots,k_n]}$
	satisfying that
	\begin{equation}
		\forall s\in S,\quad
		\sum_{i=1}^n\left(\frac{1}{k_i}\sum_{x\in S^i}
		c_i(x,s^{-i})-
		c_i(s)\right)=0.
		\label{eqn:game_Harmonicgame}
	\end{equation}

	The harmonic subspace is
	\begin{equation}
		\Gr_H=\Na\oplus\Ha=\ker\left(
		\begin{bmatrix}
			I_k-\frac{1}{k_1}{\bf e}_1 & \cdots &
			I_k-\frac{1}{k_n}{\bf e}_n
		\end{bmatrix}
		\right),
		\label{eqn:game_Harmonicsubspace}
	\end{equation}
	and satisfies $\dim(\Gr_H)=(n-1)k+1$.
\end{theorem}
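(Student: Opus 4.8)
The plan is to reduce the whole statement to the matrix $P_N$ of \eqref{eqn:PN}, whose image is the pure potential subspace $\Po$ by Theorem~\ref{thm18:game_decomposition}. First I would observe that the matrix in \eqref{eqn:game_Harmonicsubspace}, namely $M:=\begin{bmatrix} I_k-\frac{1}{k_1}{\bf e}_1 & \cdots & I_k-\frac{1}{k_n}{\bf e}_n\end{bmatrix}$, is exactly $P_N^T$, because each block $I_k-\frac{1}{k_i}{\bf e}_i$ is symmetric (${\bf e}_i$ being symmetric). Hence by Proposition~\ref{prop_linearspace}, $\ker(M)=\ker(P_N^T)=\im(P_N)^{\bot}=\Po^{\bot}$. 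It then remains to identify $\Po^{\bot}$ with $\Na\oplus\Ha$: since $\Ha=\im(B_P)^{\bot}=\Gr_P^{\bot}=(\Po\oplus\Na)^{\bot}$ and $\Po\bot\Na$ by \eqref{eqn:game_Purepotential2}, the three subspaces $\Po,\Na,\Ha$ are mutually orthogonal, and their dimensions (from Theorems~\ref{thm12:game_decomposition}, \ref{prop_game_Purepotentialsubspace} and \ref{thm18:game_decomposition}) sum to $nk$, so they give an orthogonal decomposition of $\R^{nk}$ and $\Po^{\bot}=\Na\oplus\Ha=\Gr_H$. This settles the middle and right-hand equalities of \eqref{eqn:game_Harmonicsubspace} at once.

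Next I would translate the membership condition $u\in\ker(M)$ into the payoff form \eqref{eqn:game_Harmonicgame}. Writing a game as $u=(u_1,\dots,u_n)^T$ with each $u_i^T\in\R^k$, the equation $Mu=0$ is equivalent, after transposing, to the vanishing of the row vector $\sum_{i=1}^n u_i\bigl(I_k-\frac{1}{k_i}{\bf e}_i\bigr)$, i.e. to $\sum_{i=1}^n u_i\bigl(I_k-\frac{1}{k_i}{\bf e}_i\bigr)s=0$ for every strategy profile $s\in\Dt_k$. Here $u_is=c_i(s)$, and the computation already carried out in the proof of Theorem~\ref{thm19:game_decomposition} shows that $\frac{1}{k_i}u_i{\bf e}_i$ is the structure vector of $s\mapsto\frac{1}{k_i}\sum_{x\in S^i}c_i(x,s^{-i})$. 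Substituting these two facts turns the condition into $\sum_{i=1}^n\bigl(\frac{1}{k_i}\sum_{x\in S^i}c_i(x,s^{-i})-c_i(s)\bigr)=0$ for all $s\in S$, which is exactly \eqref{eqn:game_Harmonicgame}. Thus the games satisfying \eqref{eqn:game_Harmonicgame} coincide with $\ker(M)=\Gr_H$.

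Finally, the dimension is immediate: either from the orthogonal sum, $\dim(\Gr_H)=\dim(\Na)+\dim(\Ha)=\sum_{i=1}^n\frac{k}{k_i}+\bigl((n-1)k-\sum_{i=1}^n\frac{k}{k_i}+1\bigr)=(n-1)k+1$, or directly by rank--nullity, $\dim\ker(M)=nk-\rank(P_N)=nk-\dim(\Po)=nk-(k-1)=(n-1)k+1$. I expect no serious obstacle here. The only point requiring any care is the bookkeeping in the payoff translation, and that reduces verbatim to the already-established identity of Theorem~\ref{thm19:game_decomposition}; everything else is a short chain of orthogonality facts inherited from the preceding results, with the single structural insight being the identification $M=P_N^T$.
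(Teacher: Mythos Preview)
Your proposal is correct and follows essentially the same route as the paper. The paper also reduces \eqref{eqn:game_Harmonicsubspace} to Theorem~\ref{thm18:game_decomposition} (i.e., to $\Po=\im(P_N)$ and hence $\Gr_H=\Po^{\bot}=\ker(P_N^T)$) and then verifies the payoff condition by an STP computation; the only cosmetic difference is that the paper redoes the averaging identity for $\frac{1}{k_i}u_i{\bf e}_i$ in place, whereas you invoke Theorem~\ref{thm19:game_decomposition} to shortcut that step.
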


\begin{proof}
	Theorem \ref{thm18:game_decomposition} directly yields that \eqref{eqn:game_Harmonicsubspace} holds,
	and $\dim(\Gr_H)=(n-1)k+1$.

	Arbitrarily choose a game
	$u=(u_1,\dots,u_n)^T\in\Gr_{[n;k_1,\dots,k_n]}$, where each $u_i^T$ belongs to $\R^{k}$.
	For all $X=\ltimes_{j=1}^{n}X_j\in\Dt_k$, where each $X_j$ belongs to $\Dt_{k_j}$, we denote $X^{[1,i-1]}:=\ltimes
	_{j=1}^{i-1}X_j$, $X^{[i+1,n]}:=\ltimes_{j=i+1}^{n}X_j$.
	Then $u$ satisfies \eqref{eqn:game_Harmonicgame} iff, for all $X\in\Dt_k$,
	\begin{equation}
		\begin{split}
			0 &= \sum_{i=1}^n\left( \frac{1}{k_i}\sum_{z\in\Dt_{k_i}}u_iX^{[1,i-1]}zX^{[i+1,n]}
			-u_iX\right)\\
			&= \sum_{i=1}^n\left( \frac{1}{k_i}u_iX^{[1,i-1]}\left(\sum_{z\in\Dt_{k_i}}z\right)X^{[i+1,n]}
			-u_iX\right)\\
			&= \sum_{i=1}^n\left( \frac{1}{k_i}u_iX^{[1,i-1]}\left({\bf1}_{k_i}\right)X^{[i+1,n]}
			-u_iX\right)\\
			&= \sum_{i=1}^n\left( \frac{1}{k_i}u_iX^{[1,i-1]}\left({\bf1}_{k_i}{\bf1}^T_{k_i}X_i\right)X^{[i+1,n]}
			-u_iX\right)\\
			&= \sum_{i=1}^n\left( \frac{1}{k_i}u_i\left( I_{k^{[1,i-1]}}\otimes {\bf1}_{k_i\times k_i}
			\right) X -u_iX\right)\\
			&= \sum_{i=1}^n\left( \frac{1}{k_i}u_i\left( I_{k^{[1,i-1]}}\otimes {\bf1}_{k_i\times k_i}
			\otimes I_{k^{[i+1,n]}}\right) X -u_iX\right)\\
			&= \sum_{i=1}^{n}\left( \frac{1}{k_i}u_i{\bf e}_i-u_i \right)X
		\end{split}
		\label{}
	\end{equation}
	iff, $u\in\ker\left(\begin{bmatrix}
		I_k-\frac{1}{k_1}{\bf e}_1 & \cdots &
		I_k-\frac{1}{k_n}{\bf e}_n
		\end{bmatrix}\right)$.
\end{proof}

At the end of this subsection, we show that all harmonic games share a common mixed strategy Nash equilibrium ---
the uniformly mixed strategy profile.

\begin{theorem}\label{thm1:game_decomposition}
	Every harmonic game has the uniformly mixed strategy profile as its mixed strategy Nash equilibrium.
\end{theorem}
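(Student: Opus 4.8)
The plan is to reduce the claim to a linear \emph{indifference} condition and then verify that condition on each summand of the harmonic subspace.

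First I would recall the characterization of mixed Nash equilibria stated earlier: the uniformly mixed profile $x^*$ (with $x_i^*(s)=\frac1{k_i}$ for all $i$ and $s\in S^i$) is a mixed strategy Nash equilibrium iff for every player $i\in N$ and every pure strategy $s_i\in S^i$,
\begin{equation*}
c_i(x^*)\ge c_i(x_1^*,\dots,x_{i-1}^*,s_i,x_{i+1}^*,\dots,x_n^*).
\end{equation*}
Since each $x_i^*$ is a probability distribution, expanding $c_i(x^*)$ in player $i$'s coordinate via multilinearity gives $c_i(x^*)=\frac1{k_i}\sum_{s_i\in S^i}c_i(x_1^*,\dots,s_i,\dots,x_n^*)$, i.e.\ $c_i(x^*)$ is precisely the arithmetic mean over $s_i$ of the pure-deviation payoffs. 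A mean never exceeds its maximum, so the required inequalities can hold only with equality; conversely, if for each $i$ the deviation payoff $c_i(x_1^*,\dots,s_i,\dots,x_n^*)$ does not depend on $s_i$, then $x^*$ is a Nash equilibrium. Thus it suffices to prove the indifference statement: for every harmonic game and every $i$, the quantity $c_i(x_1^*,\dots,s_i,\dots,x_n^*)$ is constant in $s_i\in S^i$.

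The key observation is that, although ``$x^*$ is a Nash equilibrium'' is not a linear condition on the game, the indifference statement \emph{is}: for fixed $i$, the map sending a game to the vector $(c_i(x_1^*,\dots,s_i,\dots,x_n^*))_{s_i\in S^i}$ is linear in the structure vector, and ``this vector is constant'' cuts out a linear subspace of $\Gr_{[n;k_1,\dots,k_n]}$. Hence the set of games for which $x^*$ is a full-support equilibrium is a linear subspace, and by Theorem \ref{thm8:game_decomposition}, $\Gr_H=\Na\oplus\Ha$, it is enough to check indifference separately for nonstrategic games and for pure harmonic games.

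For a nonstrategic game, Theorem \ref{thm7:game_decomposition} says that $c_i(y,s)$ is independent of player $i$'s own strategy $y\in S^i$, so the deviation payoff is trivially constant in $s_i$. For a pure harmonic game I would show the deviation payoff is identically $0$: writing it as $\frac{k_i}{k}\sum_{y\in S^{-i}}c_i(s_i,y)$, I use \eqref{eqn:game_Pureharmonicgame1} to substitute $c_i(s_i,y)=-\sum_{j\ne i}c_j(s_i,y)$, and for each $j\ne i$ split the sum over $y\in S^{-i}$ into player $j$'s coordinate and the rest; the inner sum over player $j$'s strategies vanishes by the normalization \eqref{eqn:game_Pureharmonicgame4} applied to player $j$. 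Summing over $j$ yields $\sum_{y\in S^{-i}}c_i(s_i,y)=0$ for every $s_i$, giving constant (in fact zero) deviation payoffs, and linearity combines the two cases. The step I expect to require the most care is the reduction above: recognizing that the genuinely nonlinear Nash condition collapses, for a full-support profile, to the linear indifference condition, which is exactly what makes the decomposition $\Gr_H=\Na\oplus\Ha$ applicable; the only nonroutine calculation is the pure-harmonic case, where one must combine \eqref{eqn:game_Pureharmonicgame1} and \eqref{eqn:game_Pureharmonicgame4} and control $c_i$ by summing the \emph{other} players' payoffs over \emph{their} own strategies.
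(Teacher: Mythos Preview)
Your proof is correct, but it takes a genuinely different route from the paper. The paper works directly with the single harmonic identity \eqref{eqn:game_Harmonicgame} of Theorem~\ref{thm8:game_decomposition}: fixing a player $j$ and a pure deviation $r\in S^j$, it sums that identity over all $s^{-j}\in S^{-j}$; the terms for $i\ne j$ telescope (because the average over $t\in S^i$ is restored by the outer sum over $s^{-j}$), and what remains is exactly $c_j(x^*)-c_j(r,(x^*)^{-j})=0$. So the paper gets indifference in one computation, without ever splitting $\Gr_H$ into $\Na\oplus\Ha$.

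Your approach instead first isolates the linear indifference condition and then uses the orthogonal decomposition to reduce to the two extreme cases: for nonstrategic games the conclusion is immediate from Theorem~\ref{thm7:game_decomposition}, while for pure harmonic games you combine \eqref{eqn:game_Pureharmonicgame1} and \eqref{eqn:game_Pureharmonicgame4} to show the deviation payoff is actually zero. This buys a cleaner conceptual structure---the observation that full-support equilibria form a linear subspace of games is worth stating in its own right---and it makes transparent \emph{why} the result holds on each summand. The paper's single-calculation proof is shorter and avoids invoking the finer decomposition, but your argument is equally valid and arguably more illuminating.
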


\begin{proof}
	Arbitrarily given a harmonic game $(N,S,c)$. Let the uniformly mixed strategy profile be $x^*=(x_1^*,\dots,
	x_n^*)\in\prod_{i=1}^{n}{\Dt S^i}$.

	From Theorem \ref{thm8:game_decomposition}, for all $s\in S$,
	\begin{equation}\begin{split}
		0 =& \sum_{i=1}^n\left(\frac{1}{k_i}\sum_{t\in S^i}
		c_i(t,s^{-i})-c_i(s)\right)\\
		=&  \frac{1}{k_j}\sum_{t\in S^j}
		c_j(t,s^{-j})-c_j(s)  + \\
		&\sum_{i=1,i\ne j}^n\left(\frac{1}{k_i}\sum_{t\in S^i}
		c_i(t,s^{-i})-c_i(s)\right).
	\end{split}
	\end{equation}

	Then for all players $j\in N$, all strategies $r\in S^j$,
	\begin{equation}
		\begin{split}
			0 =& \sum_{s^{-j}\in S^{-j}} \left(\frac{1}{k_j}\sum_{t\in S^j} c_j(t,s^{-j})-c_j(r,s^{-j}) \right)
			+\\& \sum_{s^{-j}\in S^{-j}}  \sum_{i=1,i\ne j}^n\left(\frac{1}{k_i}\sum_{t\in S^i}
			c_i(t,s^{-i})-c_i(s)\right)\\
			=& \frac{1}{k_j} \sum_{s^{-j}\in S^{-j}} \sum_{t\in S^j} c_j(t,s^{-j})- \sum_{s^{-j}\in S^{-j}}c_j(r,s^{-j}) +\\
			&\sum_{i=1,i\ne j}^n \left(\frac{1}{k_i} \sum_{s^{-j}\in S^{-j}} \sum_{t\in S^i}
			c_i(t,s^{-i})- \sum_{s^{-j}\in S^{-j}}c_i(s)\right)\\
			=& \frac{1}{k_j}\sum_{s\in S}c_j(s)-\sum_{s^{-j}\in S^{-j}}c_j(r,s^{-j})
			+\\& \sum_{i=1,i\ne j}^n \left(\frac{k_i}{k_i} \sum_{s^{-j}\in S^{-j}}
			c_i(s)- \sum_{s^{-j}\in S^{-j}}c_i(s)\right)\\
			=& \frac{1}{k_j}\sum_{s\in S}c_j(s)-\sum_{s^{-j}\in S^{-j}}c_j(r,s^{-j})\\
			=& \frac{k}{k_j} \left(\frac{1}{k}\sum_{s\in S}c_j(s)
			-\frac{1}{\frac{k}{k_j}}\sum_{s^{-j}\in S^{-j}}c_j(r,s^{-j})\right)\\
			=& \frac{k}{k_j} \left(c_j(x^*)-c_j(r,(x^*)^{-j})\right).
		\end{split}
	\end{equation}

	Hence $x^*$ is a mixed strategy Nash equilibrium of game $(N,S,c)$.
\end{proof}

From Theorem \ref{thm1:game_decomposition} and \cite[Theorem 5.5]{Candogan2011FlowsDecompositionofGames},
it follows that these two different harmonic games have similar properties.

\subsection{Decomposition of subspaces of finite games}

\begin{theorem}\label{thm3:game_decomposition}
	Consider the finite game space $\Gr_{[n;k_1,\dots,k_n]}$.
	The pure potential subspace $\Po$, the nonstrategic subspace
	$\Na$ and the pure harmonic subspace $\Ha$ satisfy that
	\begin{equation}
		\begin{split}
			\mathcal{P} &= \im(B_PB_P^{\dag}-B_NB_N^{\dag}),\\
			\mathcal{N} &= \im(B_NB_N^{\dag}),\\
			\mathcal{H} &= \im(I_{nk}-B_PB_P^{\dag}).
		\end{split}
		\label{eqn:game_subspace}
	\end{equation}
	$B_PB_P^{\dag}-B_NB_N^{\dag},B_NB_N^{\dag},I_{nk}-B_PB_P^{\dag}$
	are orthogonal projections onto these three subspaces.
	Under the conventional inner product,
	$\Po\bot\Na$, $\Po\bot\Ha$, $\Na\bot\Ha$.

	Every game $u\in\Gr_{[n;k_1,\dots,k_n]}$ has a unique direct-sum
	decomposition
	$u=u_{\mathcal{P}}+u_{\mathcal{N}}+u_{\mathcal{H}}$,
	where $u_{\mathcal{P}}\in\Po,u_{\mathcal{N}}\in\Na,u_{\mathcal{H}}\in\Ha$.
	Actually,
	$u_{\mathcal{P}}=(B_PB_P^{\dag}-B_NB_N^{\dag})u\in\mathcal{P}$,
	$u_{\mathcal{N}}=B_NB_N^{\dag}u\in\mathcal{N}$,
	$u_{\mathcal{H}}=(I_{nk}-B_PB_P^{\dag})u\in\mathcal{H}$.

\end{theorem}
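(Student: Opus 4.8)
The plan is to reduce everything to facts already established about the orthogonal projections $B_NB_N^{\dag}$ and $B_PB_P^{\dag}$. By Proposition~\ref{prop_linearspace}, $B_NB_N^{\dag}$ is the orthogonal projection of $\R^{nk}$ onto $\im(B_N)=\Na$, $B_PB_P^{\dag}$ is the orthogonal projection onto $\im(B_P)=\Gr_P$, and $I_{nk}-B_PB_P^{\dag}$ is the orthogonal projection onto $\ker(B_P^T)=\im(B_P)^{\bot}=\Ha$. This already yields the second and third lines of \eqref{eqn:game_subspace} together with the fact that $B_NB_N^{\dag}$ and $I_{nk}-B_PB_P^{\dag}$ are the claimed projections, so the real work concentrates on the pure potential part.

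First I would record the containment $\Na\subset\Gr_P$, which follows from \eqref{eqn:game_Purepotential3}: the columns of $B_N$ sit among those of $[P_N,B_N]=B_P M$ with $M$ square and invertible (lower block-triangular with identity diagonal blocks), so $\im(B_N)\subset\im(B_P)$. Feeding this into Proposition~\ref{prop_MPinverse_rangeinclusion} with $A=B_P$, $B=B_N$ gives the commutation identity $B_NB_N^{\dag}B_PB_P^{\dag}=B_PB_P^{\dag}B_NB_N^{\dag}=B_NB_N^{\dag}$. With this in hand, $B_PB_P^{\dag}-B_NB_N^{\dag}$ is symmetric (a difference of symmetric matrices) and idempotent (expand the square and cancel the cross terms using the commutation identity), hence it is an orthogonal projection.

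The crux is identifying the image of $B_PB_P^{\dag}-B_NB_N^{\dag}$ with $\Po$, and this is where I expect the only real obstacle to lie, since it is where the \emph{orthogonal} rather than merely direct character of the splitting $\Gr_P=\Po\oplus\Na$ is essential. I would use $\Po\bot\Na$ from \eqref{eqn:game_Purepotential2} and $\Po\oplus\Na=\Gr_P$ from \eqref{eqn:game_Purepotential3}, both as exploited in Theorem~\ref{thm18:game_decomposition}. For arbitrary $x$, write the projection $B_PB_P^{\dag}x=p+q$ with $p\in\Po$, $q\in\Na$; then $B_NB_N^{\dag}(p+q)=q$ because $B_NB_N^{\dag}$ annihilates $\Po$ (orthogonal to $\Na$) and fixes $\Na$, so $(B_PB_P^{\dag}-B_NB_N^{\dag})x=p\in\Po$. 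Conversely any $p\in\Po\subset\Gr_P$ is fixed by $B_PB_P^{\dag}$ and killed by $B_NB_N^{\dag}$, so $(B_PB_P^{\dag}-B_NB_N^{\dag})p=p$; thus the image is exactly $\Po$, proving the first line of \eqref{eqn:game_subspace}.

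Finally the orthogonality relations and the decomposition follow formally. Since $\Po,\Na\subset\Gr_P$ and $\Ha=\Gr_P^{\bot}$, we get $\Po\bot\Ha$ and $\Na\bot\Ha$, while $\Po\bot\Na$ is \eqref{eqn:game_Purepotential2}. The three orthogonal projections sum to the identity, $(B_PB_P^{\dag}-B_NB_N^{\dag})+B_NB_N^{\dag}+(I_{nk}-B_PB_P^{\dag})=I_{nk}$, so applying them to any $u$ gives $u=u_{\Po}+u_{\Na}+u_{\Ha}$ with the stated formulas. Mutual orthogonality of $\Po$, $\Na$, $\Ha$ (whose dimensions also add to $nk$ by the earlier theorems) forces the sum to be direct, hence the decomposition is unique.
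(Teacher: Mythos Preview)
Your proof is correct and follows essentially the same route as the paper: both hinge on Proposition~\ref{prop_MPinverse_rangeinclusion} (using $\im(B_N)\subset\im(B_P)$) to obtain $B_PB_P^{\dag}B_NB_N^{\dag}=B_NB_N^{\dag}=B_NB_N^{\dag}B_PB_P^{\dag}$, from which the idempotence of $B_PB_P^{\dag}-B_NB_N^{\dag}$ and all the orthogonality relations drop out. Your identification of $\im(B_PB_P^{\dag}-B_NB_N^{\dag})$ with $\Po$ via the splitting $B_PB_P^{\dag}x=p+q$ is a bit more explicit than the paper's, and for uniqueness you invoke mutual orthogonality directly whereas the paper applies each projection to a putative decomposition---but these are stylistic variants of the same argument.
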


\begin{proof}
	By Theorems  \ref{thm12:game_decomposition},
	\ref{prop_game_Purepotentialsubspace} and Proposition
	\ref{prop_game_Pureharmonicsubspace},
	$\Na=\im(B_NB_N^{\dag})$, $\Ha=\im(I_{nk}-B_PB_P^{\dag})$.

	By the definition of Moore-Penrose inverses, $B_NB_N^{\dag}$ and
	$I_{nk}-B_PB_P^{\dag}$ are both orthogonal projections.
	Since $\im(B_P)\supset\im(B_N)$, from Proposition
	\ref{prop_MPinverse_rangeinclusion}, $B_PB_P^{\dag}B_NB_N^{\dag}=
	B_NB_N^{\dag}B_PB_P^{\dag}=B_NB_N^{\dag}$.
	Then $(B_PB_P^{\dag}-B_NB_N^{\dag})^2=(B_PB_P^{\dag}-B_NB_N^{\dag})$, i.e.,
	$B_PB_P^{\dag}-B_NB_N^{\dag}$ is also an orthogonal projection.
	One also has $B_NB_N^{\dag}(I_{nk}-B_PB_P^{\dag})=
	(I_{nk}-B_PB_P^{\dag})B_NB_N^{\dag}=0_{(nk)\times(nk)}$,
	$B_NB_N^{\dag}(
	B_PB_P^{\dag}-B_NB_N^{\dag})=(B_PB_P^{\dag}-B_NB_N^{\dag})
	B_NB_N^{\dag}=(I_{nk}-B_PB_P^{\dag})(
	B_PB_P^{\dag}-B_NB_N^{\dag})=(B_PB_P^{\dag}-B_NB_N^{\dag})
	(I_{nk}-B_PB_P^{\dag})
	=0_{(nk)\times(nk)}$. Hence $\im(B_PB_P^{\dag}-B_NB_N^{\dag})=
	\Po$, and $\Po\bot\Na$, $\Po\bot\Ha$, $\Na\bot\Ha$.

	Arbitrarily given $u\in\Gr_{[n;k_1,\dots,k_n]}$, it is obviously that
	$u=(B_PB_P^{\dag}-B_NB_N^{\dag})u+
	B_NB_N^{\dag}u+(I_{nk}-B_PB_P^{\dag})u$ is such a decomposition.
	Next we prove that the decomposition is unique.
	There are $u_{\Po}\in
	\Po$, $u_{\Na}\in\Na$ and $u_{\Ha}\in\Ha$ such that
	$u=u_{\Po}+u_{\Na}+u_{\Ha}$. From Eqn.
	\eqref{eqn:game_subspace},
	\begin{equation}
		\begin{split}
			&(B_PB_P^{\dag}-B_NB_N^{\dag})u \\
			=& (B_PB_P^{\dag}-B_NB_N^{\dag})u_{\Po} +
			(B_PB_P^{\dag}-B_NB_N^{\dag})u_{\Na} + \\&
			(B_PB_P^{\dag}-B_NB_N^{\dag})u_{\Ha}\\
			=& u_{\Po}+0+0=u_{\Po},\\
			&B_NB_N^{\dag}u \\
			=& B_NB_N^{\dag}u_{\Po} +
			B_NB_N^{\dag}u_{\Na} +
			B_NB_N^{\dag}u_{\Ha}\\
			=& 0+u_{\Na}+0=u_{\Na},\\
			&(I_{nk}-B_PB_P^{\dag})u \\
			=& (I_{nk}-B_PB_P^{\dag})u_{\Po} +
			(I_{nk}-B_PB_P^{\dag})u_{\Na} + \\&
			(I_{nk}-B_PB_P^{\dag})u_{\Ha}\\
			=& 0+0+u_{\Ha}=u_{\Ha}.
		\end{split}
		\label{}
	\end{equation}
	That is, the decomposition is unique.
\end{proof}

From Proposition \ref{prop_orthogonalprojection1}, Theorems \ref{thm18:game_decomposition} and
\ref{thm3:game_decomposition}, the following Proposition \ref{thm15:game_decomposition} follows.

\begin{proposition}\label{thm15:game_decomposition}
	\begin{equation}
		\begin{split}
			B_PB_P^{\dag} &= B_NB_N^{\dag}+P_NP_N^{\dag}.
		\end{split}
		\label{}
	\end{equation}
\end{proposition}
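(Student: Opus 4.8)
The plan is to recognize both sides of the claimed identity as orthogonal projections onto one and the same subspace, namely the pure potential subspace $\Po$, and then to invoke the uniqueness statement of Proposition \ref{prop_orthogonalprojection1}. Since that proposition asserts that two orthogonal projections with equal image must be equal, the entire argument reduces to identifying the images.

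First I would read off from Theorem \ref{thm3:game_decomposition} that the matrix $B_PB_P^{\dag}-B_NB_N^{\dag}$ is an orthogonal projection and that $\im(B_PB_P^{\dag}-B_NB_N^{\dag})=\Po$; both facts are established there, so I may cite them directly rather than re-verify symmetry and idempotency. Next I would note that $P_NP_N^{\dag}$ is also an orthogonal projection: by Proposition \ref{prop_linearspace}, applied with $A=P_N$, the matrix $P_NP_N^{\dag}$ is precisely the orthogonal projection of $\R^{nk}$ onto $\im(P_N)$. Finally, Theorem \ref{thm18:game_decomposition} gives $\im(P_NP_N^{\dag})=\im(P_N)=\Po$.

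Putting these together, both $B_PB_P^{\dag}-B_NB_N^{\dag}$ and $P_NP_N^{\dag}$ are orthogonal projections whose common image is $\Po$. By Proposition \ref{prop_orthogonalprojection1}, equality of images forces equality of the projections, so
\[
	B_PB_P^{\dag}-B_NB_N^{\dag}=P_NP_N^{\dag},
\]
and rearranging yields the desired identity $B_PB_P^{\dag}=B_NB_N^{\dag}+P_NP_N^{\dag}$.

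I do not expect any genuine obstacle in this argument; its content is entirely the bookkeeping of which matrices represent the orthogonal projections onto $\Po$ and $\Na$, all of which has already been done in the cited theorems. The one point I would be careful to state explicitly, rather than take for granted, is that $B_PB_P^{\dag}-B_NB_N^{\dag}$ really is a bona fide orthogonal projection (symmetric and idempotent) with image exactly $\Po$; but since this is precisely the conclusion of Theorem \ref{thm3:game_decomposition}, invoking that theorem suffices and no separate computation is needed.
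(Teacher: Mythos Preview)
Your proposal is correct and matches the paper's own argument exactly: the paper simply states that the proposition follows from Proposition~\ref{prop_orthogonalprojection1}, Theorem~\ref{thm18:game_decomposition}, and Theorem~\ref{thm3:game_decomposition}, which is precisely the chain of citations you have spelled out in detail.
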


\section{Polynomial representation for orthogonal projections onto subspaces of finite games}
\label{sec:explicitProjontoGameSubsapce}

In Section \ref{sec:ProjontoGameSubsapce},
polynomial representation for the orthogonal projection onto
the nonstrategic subspace $\Na$, i.e.,
$\left(\frac{1}{k_1}{\bf e}_1\oplus\cdots\oplus\frac{1}{k_n}{\bf e}_n\right)$,
is given in Theorem \ref{thm12:game_decomposition}.
In the sequel, based on the results in Section \ref{sec:ProjontoGameSubsapce},
we give polynomial representation for orthogonal projections onto
all the other subspaces of finite games and for potential functions of potential games.
We start with the pure potential subspace.

\subsection{Algorithms for calculating the orthogonal projections onto the pure potential subspace}

By Propositions \ref{prop_linearspace}, \ref{prop_groupMPinverse}, and
Theorem \ref{thm18:game_decomposition}, the following theorem holds.

\begin{theorem}\label{thm9:game_decomposition}
	\begin{equation}
		\begin{split}
		&\Po =\im\left(
		\begin{bmatrix}
			I_k-\frac{1}{k_1}{\bf e}_1 \\ \vdots \\
			I_k-\frac{1}{k_n}{\bf e}_n
		\end{bmatrix}
		\right) =\\& \im\left(
		\begin{bmatrix}
			I_k-\frac{1}{k_1}{\bf e}_1 \\ \vdots \\
			I_k-\frac{1}{k_n}{\bf e}_n
		\end{bmatrix}
		\begin{bmatrix}
			I_k-\frac{1}{k_1}{\bf e}_1 \\ \vdots \\
			I_k-\frac{1}{k_n}{\bf e}_n
		\end{bmatrix}^{\dag}
		\right)=\\
		& \im\left(
		\begin{bmatrix}
			I_k-\frac{1}{k_1}{\bf e}_1 \\ \vdots \\
			I_k-\frac{1}{k_n}{\bf e}_n
		\end{bmatrix}\left(
		\sum_{i=1}^{n}\left(I_k-\frac{1}{k_i}{\bf e}_i\right)
		\right)^{\sharp} 
		\begin{bmatrix}
			I_k-\frac{1}{k_1}{\bf e}_1 \\ \vdots \\
			I_k-\frac{1}{k_n}{\bf e}_n
		\end{bmatrix}^T\right),
		\end{split}
		\label{eqn:game_Purepotentialgame}
	\end{equation}
	where ${\bf e}_1,\dots,{\bf e}_n$ are defined in \eqref{eqn:game_E_i}.
\end{theorem}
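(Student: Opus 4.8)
The plan is to observe that the first two displayed equalities are simply a restatement of Theorem~\ref{thm18:game_decomposition}, so that the entire content lies in the third equality. Write $P_N\in\R_{(nk)\times k}$ for the tall matrix defined in \eqref{eqn:PN}. Theorem~\ref{thm18:game_decomposition} already asserts $\Po=\im(P_N)=\im(P_NP_N^{\dag})$, and by Proposition~\ref{prop_linearspace} we have $\im(P_NP_N^{\dag})=\im(P_N)$ with $P_NP_N^{\dag}$ the orthogonal projection of $\R^{nk}$ onto $\Po$. Hence the first two images both equal $\Po$ with no further work.

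For the third equality I would apply Proposition~\ref{prop_groupMPinverse} to $A=P_N$, which gives the identity $P_NP_N^{\dag}=P_N(P_N^TP_N)^{\sharp}P_N^T$. Consequently $\im\bigl(P_N(P_N^TP_N)^{\sharp}P_N^T\bigr)=\im(P_NP_N^{\dag})=\Po$, so the third image coincides with the first two. It therefore only remains to recognize that the small $k\times k$ Gram matrix $P_N^TP_N$ is exactly the sum $\sum_{i=1}^n\bigl(I_k-\frac{1}{k_i}{\bf e}_i\bigr)$ that appears inside the group inverse in the statement.

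The single computation that matters is $P_N^TP_N=\sum_{i=1}^n\bigl(I_k-\frac{1}{k_i}{\bf e}_i\bigr)^T\bigl(I_k-\frac{1}{k_i}{\bf e}_i\bigr)$, obtained by multiplying the block column $P_N$ by its transpose. Here I would reuse the fact, already checked in the proof of Theorem~\ref{thm12:game_decomposition}, that each $\frac{1}{k_i}{\bf e}_i$ is symmetric and idempotent; then each block $I_k-\frac{1}{k_i}{\bf e}_i$ is itself a symmetric projection, so $\bigl(I_k-\frac{1}{k_i}{\bf e}_i\bigr)^T\bigl(I_k-\frac{1}{k_i}{\bf e}_i\bigr)=I_k-\frac{1}{k_i}{\bf e}_i$, and summing over $i$ yields the claimed identity. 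The group inverse $(P_N^TP_N)^{\sharp}$ is well defined because $P_N^TP_N$ is a real symmetric square matrix, and every such matrix admits a group inverse by Proposition~\ref{prop_groupinverse}.

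I do not anticipate a genuine obstacle: the argument is a short chain of results already in hand, and the only mild subtlety is tracking the symmetry and idempotency of the diagonal blocks so that the block-wise collapse of $P_N^TP_N$ is justified. The value of the statement is that it exhibits the orthogonal projection onto $\Po$ via the group inverse of a single $k\times k$ symmetric matrix, which by Proposition~\ref{prop_groupinverse} is a polynomial in that matrix and hence can be computed analytically.
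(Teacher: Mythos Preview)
Your proposal is correct and follows essentially the same route as the paper, which simply cites Theorem~\ref{thm18:game_decomposition} together with Propositions~\ref{prop_linearspace} and~\ref{prop_groupMPinverse}. You have merely spelled out the one implicit step, namely that each $I_k-\frac{1}{k_i}{\bf e}_i$ is a symmetric idempotent so that $P_N^TP_N=\sum_{i=1}^{n}\bigl(I_k-\frac{1}{k_i}{\bf e}_i\bigr)$.
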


If we obtain the representation for $\left(\sum_{i=1}^{n}\left(I_k-
\frac{1}{k_i}{\bf e}_i\right)\right)^{\sharp}$, we will obtain the polynomial representation
for the orthogonal projection onto $\Po$.

First we design an algorithm to calculate\\ $\left(\sum_{i=1}^{n}\left(I_k-
\frac{1}{k_i}{\bf e}_i\right)\right)^{\sharp}$.
The following proposition plays an important role in designing this algorithm.

\begin{proposition}\label{prop_game_eNs}
	Consider the finite game space $\Gr_{[n;k_1,\dots,k_n]}$.
	${\bf e}_{N_s}$, $N_s\subset[1,n]$ (defined in \eqref{eqn:game_eNs}) are linearly independent.
\end{proposition}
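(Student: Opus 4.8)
The plan is to exploit the Kronecker-product form ${\bf e}_{N_s}=A_1\otimes\cdots\otimes A_n$ already recorded in the excerpt, where $A_i=I_{k_i}$ when $i\notin N_s$ and $A_i={\bf 1}_{k_i\times k_i}$ when $i\in N_s$. I would set up an arbitrary vanishing linear combination $\sum_{N_s\subset[1,n]}c_{N_s}{\bf e}_{N_s}=0$ and show every coefficient $c_{N_s}$ is zero. Because each ${\bf e}_{N_s}$ is a tensor product of the two elementary blocks $I_{k_i}$ and ${\bf 1}_{k_i\times k_i}$, reading off a single matrix entry turns the whole statement into a transparent combinatorial system over the Boolean lattice $2^{[1,n]}$.

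Concretely, first I would index the rows and columns of a $k\times k$ matrix by tuples $a=(a_1,\dots,a_n)$ and $b=(b_1,\dots,b_n)$ with $a_i,b_i\in[1,k_i]$, so that the $(a,b)$ entry of ${\bf e}_{N_s}$ factors as $\prod_{i\notin N_s}\delta_{a_i,b_i}$; this equals $1$ exactly when $a_i=b_i$ for all $i\notin N_s$, and is $0$ otherwise. Introducing the disagreement set $T(a,b):=\{i:a_i\neq b_i\}$, this entry is nonzero iff $T(a,b)\subseteq N_s$. Hence evaluating $\sum_{N_s}c_{N_s}{\bf e}_{N_s}=0$ at the entry $(a,b)$ yields $\sum_{N_s\supseteq T(a,b)}c_{N_s}=0$. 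The key observation is that, provided $k_i\ge 2$ for every $i$, every subset $T\subseteq[1,n]$ arises as some $T(a,b)$ (choose $a_i\neq b_i$ exactly on $T$), so I obtain the full family of identities $\sum_{N_s\supseteq T}c_{N_s}=0$ for all $T\subseteq[1,n]$.

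The final step is to solve this triangular system. I would argue by downward induction on $|T|$: taking $T=[1,n]$ forces $c_{[1,n]}=0$, and for smaller $T$ the equation $\sum_{N_s\supseteq T}c_{N_s}=0$ contains $c_T$ together with coefficients $c_{N_s}$ indexed by strictly larger sets, all already known to vanish, whence $c_T=0$. Equivalently, Möbius inversion on the subset lattice gives $c_{N_s}=\sum_{T\supseteq N_s}(-1)^{|T|-|N_s|}\cdot 0=0$. This proves the claimed linear independence.

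The main obstacle, or rather the one hypothesis to flag, is the realizability step: it relies on $k_i\ge 2$ for all $i$, the natural nondegeneracy assumption on the game, since a player with a single strategy collapses $I_1$ and ${\bf 1}_{1\times 1}$ and then the ${\bf e}_{N_s}$ are not even distinct. Everything else is bookkeeping. A slicker but less self-contained alternative I would keep in reserve is to invoke the general fact that Kronecker products of linearly independent matrix families are linearly independent, applied to the pairs $\{I_{k_i},{\bf 1}_{k_i\times k_i}\}$, each of which is independent precisely when $k_i\ge2$.
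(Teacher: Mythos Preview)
Your proof is correct and follows essentially the same strategy as the paper's: both evaluate the vanishing linear combination at carefully chosen matrix entries to produce an upper-triangular system in the coefficients $c_{N_s}$, then solve by downward induction on $|N_s|$. Your version is more systematic---framing everything via the disagreement set $T(a,b)$ and M\"obius inversion rather than picking entries like $(1,k)$, $(1,k/k_1)$ one at a time---and you correctly flag the implicit hypothesis $k_i\ge 2$, which the paper's entry choices also silently require.
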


\begin{proof}
	Let $c_{N_s}\in\R$, $N_s\subset [1,n]$, and $\sum_{{N_s\subset[1,n]}}
	{c}_{N_s}{\bf e}_{N_s}=0$.

	First we consider the $(1,k)$-entry of ${\bf e}_{N_s}$, $N_s\subset
	[1,n]$.
	It can be seen that ${\bf e}_{[1,n]}(1,k)=1$, and for all $N_s\subsetneq
	[1,n]$, ${\bf e}_{N_s}(1,k)=0$. Hence $c_{[1,n]}=0$. Remove
	${\bf e}_{[1,n]}$.

	Second we consider the $(1,\frac{k}{k_1})$-entry of ${\bf e}_{N_s}$,
	$N_s\subsetneq[1,n]$. It can be seen that ${\bf e}_{[2,n]}(1,\frac
	{k}{k_i})=1$, and for all $[2,n]\ne N_s\subsetneq[1,n]$,
	${\bf e}_{N_s}(1,\frac{k}{k_i})=0$. Hence $c_{[2,n]}=0$.
	Remove ${\bf e}_{[2,n]}$.

	Similarly we have for all $i\in[1,n]$, $c_{[1,n]\setminus\{i\}}=0$.
	Remove ${\bf e}_{N_s}$, $|N_s|=n-1$.
	
	Similarly for all $N_s\subset[1,n]$ satisfying $|N_s|=n-2$,
	$c_{N_s}=0$. Remove ${\bf e}_{N_s}$, $|N_s|=n-2$.

	Repeat this procedure again and again, and finally we have
	for all $N_s\subset[1,n]$, $c_{N_s}=0$.

	Based on the above analysis, ${\bf e}_{N_s}$, $N_s\subset[1,n]$
	are linearly independent.
\end{proof}

By Propositions \ref{prop_groupinverse} and  \ref{prop_game_eNs},
$\left(\sum_{i=1}^{n}\left(I_k-\frac{1}{k_i}{\bf e}_i\right)\right)^{\sharp}$
is in the form of $\sum_{N_s\subset[1,n]}c_{N_s}{\bf e}_{N_s}$, where
all coefficients $c_{N_s}$ belong to $\R$ and are unique.
Let $X=\sum_{N_s\subset[1,n]}d_{N_s}{\bf e}_{N_s}$, where $d_{N_s}\in\R$
are to be determined. Then
$\left(\sum_{i=1}^{n}\left(I_k-\frac{1}{k_i}{\bf e}_i\right)\right)^2X$
is in the form of $\sum_{N_s\subset[1,n]}e_{N_s}{\bf e}_{N_s}$,
where $e_{N_s}$ are polynomials of $d_{N_s}$, $N_s\subset[1,n]$.
From Proposition \ref{prop_groupinverse_Oredomain}, it follows that
linear equation
\begin{equation}\label{eqn:game_Pureharmonicgame5}
	\begin{split}
	&\left(\sum_{i=1}^{n}\left(I_k-\frac{1}{k_i}{\bf e}_i\right)\right)^2
	\left(\sum_{N_s\subset[1,n]}d_{N_s}{\bf e}_{N_s}\right)=\\
	&\sum_{i=1}^{n}\left(I_k-\frac{1}{k_i}{\bf e}_i\right)
	\end{split}
\end{equation}
has solution.

\begin{algo}\label{algo3_game_Pureharmonicgamesubspace}
	Find a solution $\{d^0_{N_s}|N_s\subset[1,n]\}$
	of Eqn. \eqref{eqn:game_Pureharmonicgame5}.
	From Proposition \ref{prop_groupinverse_Oredomain},
	$\left(\sum_{i=1}^{n}\left(I_k-\frac{1}{k_i}{\bf e}_i\right)\right)
	^{\sharp}=
	\left(\sum_{i=1}^{n}\left(I_k-\frac{1}{k_i}{\bf e}_i\right)\right)
	\left(\sum_{N_s\subset[1,n]}d^0_{N_s}{\bf e}_{N_s}\right)^2:=\\
	\left(\sum_{N_s\subset[1,n]}e^0_{N_s}{\bf e}_{N_s}\right)$.
\end{algo}


By using Algorithm \ref{algo3_game_Pureharmonicgamesubspace}, for any given $n$,
one can obtain the polynomial representation for
$\left(\sum_{i=1}^{n}\left(I_k-\frac{1}{k_i}{\bf e}_i\right)
\right)^{\sharp}$.

\begin{example}\label{exam1:game_subspace}
	For $n=2$: By using Algorithm \ref{algo3_game_Pureharmonicgamesubspace}, one has
	\begin{equation}
		\left(\sum_{i=1}^{2}\left(I_k-\frac{1}{k_i}{\bf e}_i\right)\right)^{\sharp}=
		\frac{1}{2}I_k+\frac{1}{2}\frac{{\bf e}_1}{k_1}+\frac{1}{2}\frac{{\bf e}_2}{k_2}
		-\frac{3}{2}\frac{{\bf e}_1{\bf e}_2}{k_1k_2}.
		\label{}
	\end{equation}
\end{example}

\begin{example}\label{exam2:game_subspace}
	For $n=3$: By using Algorithm \ref{algo3_game_Pureharmonicgamesubspace}, one has
	\begin{equation}\begin{split}
		&\left(\sum_{i=1}^{3}\left(I_k-\frac{1}{k_i}{\bf e}_i\right)\right)^{\sharp}= \\&
		\frac{1}{3}I_k+\frac{1}{6}\frac{{\bf e}_1}{k_1}+\frac{1}{6}\frac{{\bf e}_2}{k_2}
		+\frac{1}{6}\frac{{\bf e}_3}{k_3}+ \\&
		\frac{1}{3}\frac{{\bf e}_1{\bf e}_2}{k_1k_2}+\frac{1}{3}\frac{{\bf e}_1{\bf e}_3}{k_1k_3}
		+\frac{1}{3}\frac{ {\bf e}_2{\bf e}_3}{k_2k_3}
		-\frac{11}{6}\frac{{\bf e}_1{\bf e}_2{\bf e}_3}{k_1k_2k_3}.
	\end{split}
	\end{equation}
\end{example}

Second by observing the above specific examples,
we obtain the polynomial representation for\\
$\left(\sum_{i=1}^{n}\left(I_k-\frac{1}{k_i}{\bf e}_i\right)
\right)^{\sharp}$ as a function of $n$ and a linear combination of
${\bf e}_{N_s}$, $N_s\subset[1,n]$. That is, one can directly use Theorem \ref{thm14:game_decomposition}
to calculate $\left(\sum_{i=1}^{n}\left(I_k-\frac{1}{k_i}{\bf e}_i\right)\right)^{\sharp}$.
Note that Theorem \ref{thm14:game_decomposition} is a closed form, i.e.,
no matter whether $n$ is given, one knows the polynomial form of
$\left(\sum_{i=1}^{n}\left(I_k-\frac{1}{k_i}{\bf e}_i\right)\right)^{\sharp}$.
Such results cannot be found
in \cite{Cheng2014PotentialGame} or \cite{Candogan2011FlowsDecompositionofGames}.

\begin{theorem}\label{thm14:game_decomposition}
	Consider the finite game space $\Gr_{[n;k_1,\dots,k_n]}$.
	\begin{equation}
		\begin{split}
			&\left(\sum_{i=1}^{n}\left(I_k-\frac{1}{k_i}{\bf e}_i\right)
			\right)^{\sharp} = \\&
			\sum_{j=0}^{n-1}\sum_{1\le i_1<\cdots<i_j\le n}
			\frac{1}{(n-j)C_n^j}\prod_{l=1}^{j}
			\frac{{\bf e}_{i_l}}
			{k_{i_l}}-
			\left(\sum_{i=1}^{n}\frac{1}{i}\right)
			\prod_{i=1}^{n}\frac{{\bf e}_{i}}
			{k_{i}},
		\end{split}
		\label{eqn:game_PurepotentialgameGroupinverse}
	\end{equation}
	where $\sum_{1\le i_1<\cdots< i_0\le n}\frac{1}{
	(n-0)C_n^0}\prod_{l=1}^{0}\frac{{\bf e}_{i_l}}{k_{i_l}}=\frac{1}{n}I_k$.
\end{theorem}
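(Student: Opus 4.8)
The plan is to exploit the commuting idempotent structure of the matrices $\frac{1}{k_i}{\bf e}_i$ and thereby reduce the claimed operator identity \eqref{eqn:game_PurepotentialgameGroupinverse} to a single scalar identity on joint eigenspaces. First I would record the elementary algebra. Since ${\bf e}_i=I_{k^{[1,i-1]}}\otimes{\bf1}_{k_i\times k_i}\otimes I_{k^{[i+1,n]}}$ acts nontrivially only on the $i$-th tensor factor, the matrices $\frac{1}{k_i}{\bf e}_i$ are pairwise commuting orthogonal projections; consequently each product $\prod_{i\in N_s}\frac{{\bf e}_i}{k_i}=\frac{1}{\prod_{i\in N_s}k_i}{\bf e}_{N_s}$ is again a symmetric idempotent, with multiplication rule ${\bf e}_{N_s}{\bf e}_{N_t}=\big(\prod_{i\in N_s\cap N_t}k_i\big){\bf e}_{N_s\cup N_t}$. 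In particular $A:=\sum_{i=1}^{n}\big(I_k-\frac{1}{k_i}{\bf e}_i\big)=nI_k-\sum_{i=1}^n\frac{{\bf e}_i}{k_i}$ is symmetric, so by Proposition \ref{prop_groupinverse} its group inverse $A^{\sharp}$ exists and is a polynomial in $A$, hence a polynomial in the $\frac{{\bf e}_i}{k_i}$.

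Second, I would diagonalize simultaneously. Because the $\frac{1}{k_i}{\bf e}_i$ commute and are symmetric, they admit a common orthonormal eigenbasis; on any joint eigenvector each $\frac{1}{k_i}{\bf e}_i$ acts by an eigenvalue in $\{0,1\}$, so the eigenvector is labelled by the set $T\subseteq[1,n]$ of indices $i$ with eigenvalue $1$. On such a vector, $\frac{{\bf e}_{N_s}}{\prod_{i\in N_s}k_i}$ acts by the indicator $[\,N_s\subseteq T\,]$ and $A$ acts by $n-|T|\ge 0$. Since $A$ is symmetric with nonnegative spectrum, $A^{\sharp}$ is the spectral pseudo-inverse: it acts by $\frac{1}{n-|T|}$ when $|T|<n$ and by $0$ when $T=[1,n]$. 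As both sides of \eqref{eqn:game_PurepotentialgameGroupinverse} lie in the commutative algebra generated by these projections, it suffices to check that the right-hand side acts by the same scalar on every joint eigenvector, i.e., for every label $T$.

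Third, I would evaluate the right-hand side on a label $T$ with $|T|=t$. Only subsets $N_s\subseteq T$ contribute, and there are exactly $C_t^j$ of each size $j\le t$; hence for $t<n$ the right-hand side evaluates to $\sum_{j=0}^{t}\frac{C_t^j}{(n-j)C_n^j}$, while for $t=n$ it evaluates to $\sum_{j=0}^{n-1}\frac{1}{n-j}-\sum_{i=1}^{n}\frac1i=0$ after reindexing the first sum by $m=n-j$. The theorem therefore reduces to the single combinatorial identity
\begin{equation*}
\sum_{j=0}^{t}\frac{C_t^j}{(n-j)C_n^j}=\frac{1}{n-t},\qquad 0\le t\le n-1,
\end{equation*}
which matches the required spectral action $\frac{1}{n-t}$.

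The hard part will be proving this identity; everything before it is bookkeeping about commuting projections. I would prove it by rewriting $\frac{C_t^j}{(n-j)C_n^j}=\frac{t!\,(n-j-1)!}{n!\,(t-j)!}$, substituting $m=t-j$, and recognizing the result as $\frac{t!\,(n-t-1)!}{n!}\sum_{m=0}^{t}C_{\,n-t-1+m}^{\,m}$; the hockey-stick (parallel summation) identity $\sum_{m=0}^{t}C_{r+m}^{m}=C_{r+t+1}^{t}$ with $r=n-t-1$ then collapses the sum to $C_n^t$, giving exactly $\frac{1}{n-t}$ (an induction on $t$ is an equally viable fallback). Combining the three evaluations shows the right-hand side of \eqref{eqn:game_PurepotentialgameGroupinverse} has the same spectral action as $A^{\sharp}$ on every joint eigenvector, and since both are symmetric matrices in the same commutative algebra of simultaneously diagonalizable projections, they are equal, which is the assertion.
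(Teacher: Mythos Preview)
Your proof is correct and takes a genuinely different route from the paper. The paper verifies the three group-inverse axioms directly: with $A=\sum_i(I_k-\frac{1}{k_i}{\bf e}_i)$ and $X$ the right-hand side, it first computes the product $AX$ term by term in the basis $\{{\bf e}_{N_s}\}_{N_s\subset[1,n]}$, showing that the coefficient of each intermediate $\prod_{l=1}^{j}\frac{{\bf e}_{i_l}}{k_{i_l}}$ vanishes via the local identity $\frac{n}{(n-j)C_n^j}-\frac{j}{(n-j)C_n^j}-\frac{j}{(n-j+1)C_n^{j-1}}=0$, so that $AX=I_k-\prod_i\frac{{\bf e}_i}{k_i}$; it then checks $A(AX)=A$ and $(AX)X=X$. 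Your argument instead exploits that the $\frac{1}{k_i}{\bf e}_i$ are commuting orthogonal projections, simultaneously diagonalizes, and reduces everything to the single closed-form scalar identity $\sum_{j=0}^{t}\frac{C_t^j}{(n-j)C_n^j}=\frac{1}{n-t}$ on each joint eigenspace. The paper's approach is purely algebraic in the polynomial ring generated by the ${\bf e}_i$ and needs only a two-term recursion among the coefficients; your spectral approach is more conceptual, makes transparent that $X$ is the Moore--Penrose (hence group) inverse eigenspace by eigenspace, and concentrates all the work in one hockey-stick identity. Both are short; yours arguably explains better \emph{why} the formula holds, while the paper's avoids invoking a simultaneous eigenbasis and stays within the linear-independence framework of Proposition~\ref{prop_game_eNs}.
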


\begin{proof}
	Let $X$ denote the right hand side of Eqn. \eqref{eqn:game_PurepotentialgameGroupinverse}. Since
	$\sum_{i=1}^{n}\left(I_k-\frac{1}{k_i}{\bf e}_i\right)$
	and $X$ are both polynomials of ${\bf e}_i$, $i\in[1,n]$,
	$\left(\sum_{i=1}^{n}\left(I_k-\frac{1}{k_i}{\bf e}_i\right)\right)
	X=X\left(\sum_{i=1}^{n}\left(I_k-\frac{1}{k_i}{\bf e}_i\right)\right)$.

	We claim that
	\begin{equation}
		\left(\sum_{i=1}^{n}\left(I_k-\frac{1}{k_i}{\bf e}_i\right)
		\right)X=I_k-\prod_{i=1}^{n}\frac{{\bf e}_{i}}
		{k_{i}}.
		\label{eqn:game_PurepotentialgameGroupinverse1}
	\end{equation}

	It can be verified that the coefficient of $I_k$ of the left
	hand side of Eqn. \eqref{eqn:game_PurepotentialgameGroupinverse1}
	is $n\cdot\frac{1}{(n-0)C_n^0}=1$.

	For all $1\le j<n$, and all $1\le i_1<\cdots <i_j\le n$,
	the terms $\prod_{l=1}^{j}\frac{ {\bf e}_{k_l}}{k_{i_l}}$ only come
	from $(nI_k)\left(\frac{1}{(n-j)C_n^j}\prod_{l=1}^{j}\frac{{\bf e}_{i_l}}
	{k_{i_l}}\right)$, $\left(-\sum_{i=1}^n\left(\frac{1}{k_i}{\bf e}_i\right)\right)
	\left(\frac{1}{(n-j)C_n^j}\prod_{l=1}^{j}\frac{{\bf e}_{i_l}}
	{k_{i_l}}\right)$
	and $\left(-\sum_{i=1}^n\left(\frac{1}{k_i}{\bf e}_i\right)\right)\\
	\left(\sum_{N_s\subset\{i_1,\dots,i_j\},|N_s|=j-1}\frac{1}{(n-(j-1))C_n^{j-1}}
	\frac{ {\bf e}_{N_s}}{\prod_{l\in N_s}k_l}\right)$,
	and their coefficients are $\frac{n}{ (n-j)C_n^j}$, $-\frac{j}
	{(n-j)C_n^j}$ and\\ $-\frac{C_j^{j-1}}{(n-(j-1))C_n^{j-1}}$,
	respectively. Then the coefficient of $\prod_{l=1}^{j}\frac{ {\bf e}_{k_l}}{k_{i_l}}$
	of the left hand side of Eqn.
	\eqref{eqn:game_PurepotentialgameGroupinverse1} is equal to
	$\frac{n}{ (n-j)C_n^j}-\frac{j}
	{(n-j)C_n^j}-\frac{C_j^{j-1}}{(n-(j-1))C_n^{j-1}}=0$.

	The term $\prod_{i=1}^{n}\frac{ {\bf e}_i}{k_i}$ comes from
	$\left(\sum_{i=1}^{n}\left(I_k-\frac{1}{k_i}{\bf e}_i\right)
	\right)\\
	\left(\sum_{i=1}^{n}\frac{1}{i}\right)
	\prod_{i=1}^{n}\frac{{\bf e}_{i}}{k_{i}}$
	and $\left(-\sum_{i=1}^n\left(\frac{1}{k_i}{\bf e}_i\right)\right)\\
	\left(\sum_{N_s\subset[1,n],|N_s|=n-1}\frac{1}{(n-(n-1))C_n^{n-1}}
	\frac{ {\bf e}_{N_s}}{\prod_{l\in N_s}k_l}\right)$, and their
	coefficients are $n\sum_{i=1}^{n}\frac{1}{i}-n\sum_{i=1}^{n}\frac{1}{i}=0$
	and\\ $-\frac{C_{n}^{n-1}}{(n-(n-1))C_n^{n-1}}=-1$. Hence
	the coefficient of $\prod_{i=1}^{n}\frac{ {\bf e}_{i}}{k_{i}}$
	of the left hand side of Eqn.
	\eqref{eqn:game_PurepotentialgameGroupinverse1} is equal to
	$0-1=-1$.

	Based on these analysis,
	Eqn. \eqref{eqn:game_PurepotentialgameGroupinverse1} holds.

	We also have
	\begin{equation}
		\begin{split}
			&\left(\sum_{i=1}^{n}\left(I_k-\frac{1}{k_i}{\bf e}_i\right)
			\right)\left(I_k-\prod_{i=1}^{n}\frac{{\bf e}_{i}}{k_{i}}\right) \\
			=&nI_k-n\prod_{i=1}^{n}\frac{ {\bf e}_i}{k_i}-
			\sum_{i=1}^{n}\frac{ {\bf e}_i}{k_i}+
			n\prod_{i=1}^{n}\frac{ {\bf e}_i}{k_i} \\
			=&nI_k-\sum_{i=1}^{n}\frac{ {\bf e}_i}{k_i},
		\end{split}
		\label{eqn:game_PurepotentialgameGroupinverse2}
	\end{equation}and
	\begin{equation}
		\begin{split}
			&\left(I_k-\prod_{i=1}^{n}\frac{{\bf e}_{i}}{k_{i}}\right)X = \\&
			X-\left(\sum_{j=0}^{n-1}\frac{C_n^j}{ (n-j)C_n^j}-
			\sum_{i=1}^n\frac{1}{i}\right)
			\prod_{i=1}^{n}\frac{{\bf e}_{i}}{k_{i}}=X.
		\end{split}
		\label{eqn:game_PurepotentialgameGroupinverse3}
	\end{equation}

	From \eqref{eqn:game_PurepotentialgameGroupinverse1},
	\eqref{eqn:game_PurepotentialgameGroupinverse2},
	\eqref{eqn:game_PurepotentialgameGroupinverse3}, and
	$\left(\sum_{i=1}^{n}\left(I_k-\frac{1}{k_i}{\bf e}_i\right)\right)
	X=X\left(\sum_{i=1}^{n}\left(I_k-\frac{1}{k_i}{\bf e}_i\right)\right)$,
	\begin{equation}
		X=\left(\sum_{i=1}^{n}\left(I_k-\frac{1}{k_i}{\bf e}_i\right)
		\right)^{\sharp}.
		\label{}
	\end{equation}
\end{proof}

\begin{remark}
	From Proposition \ref{prop_game_eNs}, ${\bf e}_{N_s}$, $N_s\subset[1,n]$ are linearly
	independent, so the representation for\\ $\left(\sum_{i=1}^{n}\left(I_k-\frac{1}{k_i}{\bf e}_i\right)
	\right)^{\sharp}$ shown in \eqref{eqn:game_PurepotentialgameGroupinverse} cannot be simpler.
\end{remark}

\subsection{Polynomial representation for orthogonal projections onto subspaces of finite games}

By Theorems \ref{thm9:game_decomposition} and \ref{thm14:game_decomposition},
polynomial representation for orthogonal
projections onto the subspaces of pure potential games, nonstrategic games
and pure harmonic games shown in Eqn. \eqref{eqn:game_subspace} are obtained as below.

\begin{theorem}\label{thm17:game_decomposition}
	Consider the finite game space $\Gr_{[n;k_1,\dots,k_n]}$.
	The polynomial representation for orthogonal projections onto the pure potential subspace $\Po$,
	the nonstrategic subspace $\Na$, the pure harmonic subspace $\Ha$, the potential subspace
	$\Gr_P$, the harmonic subspace $\Gr_H$ are as below: 
	\begin{equation}
		\begin{split}
			& P_NXP_N^T,	\quad
			\left(\frac{1}{k_1}{\bf e}_1\oplus\cdots\oplus
			\frac{1}{k_n}{\bf e}_n\right), \\
			& I_{nk}-P_NXP_N^T -\left(\frac{1}{k_1}{\bf e}_1\oplus\cdots\oplus
			\frac{1}{k_n}{\bf e}_n\right),\\
			& P_NXP_N^T + \left(\frac{1}{k_1}{\bf e}_1\oplus\cdots\oplus
			\frac{1}{k_n}{\bf e}_n\right),\\
			& I_{nk}-P_NXP_N^T,
		\end{split}
		\label{eqn:explicitrepresentationforOPOontogamesubspace}
	\end{equation}
	where $P_N$ is defined in \eqref{eqn:PN}, $X$ is the right hand side of
	\eqref{eqn:game_PurepotentialgameGroupinverse}, and ${\bf e}_i$'s are defined in
	\eqref{eqn:game_E_i}.
\end{theorem}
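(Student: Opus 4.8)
The plan is to assemble the statement from the orthogonal-decomposition results of Section \ref{sec:ProjontoGameSubsapce} together with the explicit group inverse computed in Theorem \ref{thm14:game_decomposition}; almost all the genuine labor is already done there, so what remains is to identify each claimed matrix as the relevant orthogonal projection and then substitute the closed form of $X$.

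First I would pin down the projection onto $\Po$. The key algebraic observation is that each summand $I_k-\frac{1}{k_i}{\bf e}_i$ is symmetric and idempotent (it is $I_k$ minus the orthogonal projection $\frac{1}{k_i}{\bf e}_i$), whence
$$P_N^TP_N=\sum_{i=1}^n\left(I_k-\frac{1}{k_i}{\bf e}_i\right)^T\left(I_k-\frac{1}{k_i}{\bf e}_i\right)=\sum_{i=1}^n\left(I_k-\frac{1}{k_i}{\bf e}_i\right).$$
Thus the matrix $X$ of Theorem \ref{thm14:game_decomposition} is exactly $(P_N^TP_N)^{\sharp}$, and Proposition \ref{prop_groupMPinverse} gives $P_NP_N^{\dag}=P_N(P_N^TP_N)^{\sharp}P_N^T=P_NXP_N^T$. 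By Theorem \ref{thm18:game_decomposition} (equivalently Theorem \ref{thm9:game_decomposition}, whose image identity already rests on this reduction) and Proposition \ref{prop_linearspace}, $P_NP_N^{\dag}$ is the orthogonal projection of $\R^{nk}$ onto $\im(P_N)=\Po$, so $P_NXP_N^T$ is its desired polynomial form.

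The projection onto $\Na$ is already the polynomial $\frac{1}{k_1}{\bf e}_1\oplus\cdots\oplus\frac{1}{k_n}{\bf e}_n$ by Theorem \ref{thm12:game_decomposition} and Proposition \ref{prop_BNMPinverse}, so nothing further is needed there. For the potential subspace I would invoke Proposition \ref{thm15:game_decomposition}, $B_PB_P^{\dag}=B_NB_N^{\dag}+P_NP_N^{\dag}$, which expresses the projection onto $\Gr_P$ as the sum of the two polynomials just obtained. Finally, Theorem \ref{thm3:game_decomposition} supplies the complementary projections: $I_{nk}-B_PB_P^{\dag}$ projects onto $\Ha$ and equals $I_{nk}-P_NXP_N^T-(\frac{1}{k_1}{\bf e}_1\oplus\cdots\oplus\frac{1}{k_n}{\bf e}_n)$, while the projection onto $\Gr_H=\Na\oplus\Ha$ is the complement of the projection onto $\Po$, namely $I_{nk}-P_NXP_N^T$. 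Substituting the closed form of $X$ from Theorem \ref{thm14:game_decomposition} into every expression then yields the stated representations.

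The only step requiring real care — and the step I would treat as the crux — is the reduction $P_N^TP_N=\sum_{i=1}^n(I_k-\frac{1}{k_i}{\bf e}_i)$, since it is precisely what makes the group inverse of Theorem \ref{thm14:game_decomposition} applicable to $P_NP_N^{\dag}$. Once this identification is in place, the remainder is bookkeeping with the already-proved orthogonality relations $\Po\bot\Na\bot\Ha$ and the sum decomposition of $B_PB_P^{\dag}$, involving no new estimate or construction.
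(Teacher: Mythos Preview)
Your proposal is correct and follows essentially the same route as the paper: the paper simply states that the theorem follows ``by Theorems \ref{thm9:game_decomposition} and \ref{thm14:game_decomposition},'' and what you have written is precisely the unpacking of that citation, with the identity $P_N^TP_N=\sum_{i=1}^n(I_k-\frac{1}{k_i}{\bf e}_i)$ being the content already built into the last line of \eqref{eqn:game_Purepotentialgame}. Your additional appeal to Proposition \ref{thm15:game_decomposition} and Theorem \ref{thm3:game_decomposition} to read off the remaining four projections is exactly what the paper leaves implicit.
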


\subsection{Polynomial representation for potential functions}

In this subsection, we give polynomial representation for potential functions.

\begin{theorem}\label{thm4:game_decomposition}
	Consider the finite game space $\Gr_{[n;k_1,\dots,k_n]}$. If a game $G\in \Gr_{[n;k_1,\dots,k_n]}$
	is potential, then the structure matrix of its potential functions are the first $k$ elements of
	\begin{equation}
		\begin{bmatrix}
			I_k &  & & \\
			-\frac{1}{k_1}E_1^T & I_{\frac{k}{k_1}} & & \\
			\vdots & & \ddots & \\
			-\frac{1}{k_n}E_n^T &&& I_{\frac{k}{k_n}}
		\end{bmatrix}
		\begin{bmatrix}
			P_N^{\dag} \\ B_N^{\dag}
		\end{bmatrix} G
		+ c{\bf1}_{k+\sum_{i=1}^{n}{\frac{k}{k_i}}},
		\label{eqn:potentialfunction}
	\end{equation}
	where $c\in\R$ is arbitrary, $P_N^{\dag}=XP_N^T$, $P_N$ is defined in \eqref{eqn:PN},
	$X$ is the right hand side of
	\eqref{eqn:game_PurepotentialgameGroupinverse}, $B_N^{\dag}=(\frac{1}{k_1}E_1^T\oplus\cdots\oplus
	\frac{1}{k_n}E_n^T)$.
\end{theorem}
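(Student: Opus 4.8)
The plan is to reduce the search for a potential function to solving the linear system $B_P z = G$, where $G$ denotes the column structure vector $(V_G^c)^T\in\R^{nk}$, to exhibit one explicit solution through the orthogonal projections onto $\Po$ and $\Na$, and then to account for the non-uniqueness by a single additive constant. Since $G$ is potential, Theorem \ref{thm:game_Potentialgamesubspace} gives $G\in\im(B_P)$, so the system is consistent. By the reformulation \eqref{eqn:potentialequation4} of Cheng's potential equation, any solution, written blockwise as $z=(z_0;\xi_1;\dots;\xi_n)$, satisfies the first block-row equation $z_0+E_1\xi_1=(V_1^c)^T$, so $z_0=(V_1^c)^T-E_1\xi_1$; this is precisely (the transpose of) the structure vector of a potential function. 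Hence it suffices to produce one explicit solution $z$ and read off its first $k$ entries.

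To build such a $z$ explicitly I would exploit the factorization \eqref{eqn:game_Purepotential3}, namely $[P_N,\,B_N]=B_P T$, with $T$ the invertible block lower-triangular matrix appearing there (and in the statement). Substituting $z=Tw$ turns $B_P z=G$ into $[P_N,\,B_N]w=G$, that is $P_N w_1+B_N w_2=G$ for $w=(w_1;w_2)$. Now $P_NP_N^{\dag}$ and $B_NB_N^{\dag}$ are the orthogonal projections onto $\Po=\im(P_N)$ and $\Na=\im(B_N)$ (Theorems \ref{thm18:game_decomposition} and \ref{thm12:game_decomposition}), and these subspaces are orthogonal with $\Po\oplus\Na=\Gr_P$ (Theorems \ref{thm18:game_decomposition} and \ref{thm3:game_decomposition}). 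As $G\in\Gr_P$, writing $G=G_{\Po}+G_{\Na}$ gives $P_N(P_N^{\dag}G)=G_{\Po}$ and $B_N(B_N^{\dag}G)=G_{\Na}$, so $w=\bigl(P_N^{\dag}G;\,B_N^{\dag}G\bigr)$ solves $[P_N,B_N]w=G$ and therefore $z=T\bigl(P_N^{\dag}G;\,B_N^{\dag}G\bigr)$ solves $B_P z=G$; this is exactly the matrix product in \eqref{eqn:potentialfunction}.

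It then remains to make the two pseudoinverses explicit. For $P_N^{\dag}$ I would use Proposition \ref{prop_groupMPinverse}, giving $P_N^{\dag}=(P_N^TP_N)^{\sharp}P_N^T$; since each $\frac1{k_i}{\bf e}_i$ is an orthogonal projection, $I_k-\frac1{k_i}{\bf e}_i$ is idempotent, whence $P_N^TP_N=\sum_{i=1}^n(I_k-\frac1{k_i}{\bf e}_i)$, whose group inverse is exactly the matrix $X$ of Theorem \ref{thm14:game_decomposition}; thus $P_N^{\dag}=XP_N^T$. For $B_N^{\dag}$ I would invoke Propositions \ref{prop_EMPinverse} and \ref{prop_BNMPinverse} to obtain $B_N^{\dag}=\frac1{k_1}E_1^T\oplus\cdots\oplus\frac1{k_n}E_n^T$. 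Finally, for the free constant I would describe $\ker(B_P)$: a vector $(v_0;v_1;\dots;v_n)$ lies in it iff $v_0=-E_iv_i$ for every $i$, forcing $v_0\in\bigcap_{i=1}^n\im(E_i)$, which is the line of globally constant functions $\{c\,{\bf1}_k\}$. Hence the first $k$ entries of the general solution of $B_P z=G$ range over one fixed vector plus $c\,{\bf1}_k$, matching the standard fact that potential functions are unique up to an additive constant (since $c\,{\bf1}_k$ is the structure vector of the constant $c$). Adding $c\,{\bf1}_{k+\sum_{i=1}^n k/k_i}$ as in \eqref{eqn:potentialfunction} affects only these first $k$ entries and therefore sweeps out all potential functions.

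I expect the main obstacle to be the bookkeeping that links the projection-theoretic solution $z=Tw$ back to Cheng's parametrization: one must verify that the solution produced here has first block of the required form $(V_1^c)^T-E_1\xi_1$, and that the single constant $c$ genuinely exhausts the remaining freedom. The kernel computation above is what closes this gap; the other steps are direct substitutions of results already established.
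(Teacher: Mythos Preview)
Your proposal is correct and follows essentially the same route as the paper: both reduce to showing $B_P z=G$ for the displayed $z$, use the factorization \eqref{eqn:game_Purepotential3} to rewrite $B_P T$ as $[P_N,\,B_N]$, and conclude via $P_NP_N^{\dag}+B_NB_N^{\dag}=B_PB_P^{\dag}$ (the paper cites Proposition \ref{thm15:game_decomposition}, you re-derive it from the orthogonal splitting $\Gr_P=\Po\oplus\Na$). Your treatment is in fact slightly more complete, since you explicitly compute $\ker(B_P)$ to justify that the additive constant $c{\bf1}_k$ exhausts the freedom in the first $k$ entries, a point the paper leaves implicit.
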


\begin{proof}
	In view of \eqref{eqn:potentialequation4}, in order to prove this theorem, we only need to prove
	$$B_P\begin{bmatrix}
		I_k &  & & \\
		-\frac{1}{k_1}E_1^T & I_{\frac{k}{k_1}} & & \\
		\vdots & & \ddots & \\
		-\frac{1}{k_n}E_n^T &&& I_{\frac{k}{k_n}}
	\end{bmatrix}
	\begin{bmatrix}
		P_N^{\dag} \\ B_N^{\dag}
	\end{bmatrix} G=G.$$

	By \eqref{eqn:game_Purepotential3} and
	Theorem \ref{thm15:game_decomposition},
	\begin{equation}
		\begin{split}
			&B_P\begin{bmatrix}
				I_k &  & & \\
				-\frac{1}{k_1}E_1^T & I_{\frac{k}{k_1}} & & \\
				\vdots & & \ddots & \\
				-\frac{1}{k_n}E_n^T &&& I_{\frac{k}{k_n}}
			\end{bmatrix}
			\begin{bmatrix}
				P_N^{\dag} \\ B_N^{\dag}
			\end{bmatrix}G \\
			=&\begin{bmatrix}
				P_N & B_N
			\end{bmatrix}\begin{bmatrix}
				P_N^{\dag} \\ B_N^{\dag}
			\end{bmatrix}G
			= (P_NP_N^{\dag}+B_NB_N^{\dag})G \\
			=& B_PB_P^{\dag}G=G.
		\end{split}
		\label{}
	\end{equation}

\end{proof}

By Theorems \ref{thm17:game_decomposition} and \ref{thm4:game_decomposition},
in order to testify whether a finite game belongs to one of the five subspaces described in
Theorem \ref{thm17:game_decomposition}, one only needs to use the number $n$ of players and those
$k_1,\dots,k_n$
of their strategies to calculate the orthogonal projection $P_\pi$ onto them, where $P_{\pi}$
denotes the orthogonal projection onto one of the five subspaces.
Then game $u\in\Gr_{[n;k_1,\dots,k_n]}$ belongs to the corresponding subspace iff $P_{\pi}u=u$.
Furthermore, if $u$ is potential, one can use the orthogonal projection $P_{\Gr_P}$ onto the potential subspace
to calculate its potential functions according to Theorem \ref{thm4:game_decomposition}.


Besides, although
closed forms of the orthogonal projections onto the subspaces of $2$-player finite games  are given in
\cite[Subsection 4.3]{Candogan2011FlowsDecompositionofGames}, closed forms for the general case are not shown.
In order to obtain the closed forms of the orthogonal projections onto the subspaces of
finite games shown in \cite{Candogan2011FlowsDecompositionofGames}, one needs to find
the closed forms of $\dt_0^{\dag}$ and $D^\dag$ in
\cite[Theorem 4.1]{Candogan2011FlowsDecompositionofGames}, where $\dt_0=\sum_{i=1}^
{M}D_i$ and $D=[D_1^*,...,D_M^*]^*$, $(\cdot)^*$ denotes the adjoint opertor of $\cdot$,
$D_1,...,D_M$ are corresponding linear operators. The closed form of $D^\dag$ is given
in \cite[Lemma 4.4]{Candogan2011FlowsDecompositionofGames}, while the closed form
of $\dt_0^\dag$ is not shown.

Next we give some illustrative examples to show the advantage of our results.

\subsection{Illustrative examples}

\begin{example}\cite{Cheng2014PotentialGame}\label{exam1:orthogonal_operator_game}
	Consider an arbitrary symmetric game with $n=3$ and $k_1=k_2=k_3=2$. It is proved in \cite{Cheng2014PotentialGame}
	that such symmetric games are always potential via proving that the corresponding potential equations always
	have solution. Next we show how to use Theorem \ref{thm17:game_decomposition} to verify this conclusion.
	The process of \cite{Cheng2014PotentialGame} is constructive. However our following process is
	mechanical, and can be finished by a computer program.
	
    First we obtain the orthogonal projection onto the subspace of potential games $\Gr_P$ according to
	Theorem \ref{thm17:game_decomposition} as in Eqn. \eqref{eqn_projection_exam1}.
	\begin{figure*}
		\begin{equation}\label{eqn_projection_exam1}
	P_{\Gr_P}=\frac{1}{48}
	\begin{bmatrix}
	38,  4,  4,  2, 10, -4, -4, -2,  5,  1, -5, -1, -5, -1,  5,  1,  5, -5,  1, -1, -5,  5, -1,  1\\
	4, 38,  2,  4, -4, 10, -2, -4,  1,  5, -1, -5, -1, -5,  1,  5, -5,  5, -1,  1,  5, -5,  1, -1\\
	4,  2, 38,  4, -4, -2, 10, -4, -5, -1,  5,  1,  5,  1, -5, -1,  1, -1,  5, -5, -1,  1, -5,  5\\
	2,  4,  4, 38, -2, -4, -4, 10, -1, -5,  1,  5,  1,  5, -1, -5, -1,  1, -5,  5,  1, -1,  5, -5\\
	10, -4, -4, -2, 38,  4,  4,  2, -5, -1,  5,  1,  5,  1, -5, -1, -5,  5, -1,  1,  5, -5,  1, -1\\
	-4, 10, -2, -4,  4, 38,  2,  4, -1, -5,  1,  5,  1,  5, -1, -5,  5, -5,  1, -1, -5,  5, -1,  1\\
	-4, -2, 10, -4,  4,  2, 38,  4,  5,  1, -5, -1, -5, -1,  5,  1, -1,  1, -5,  5,  1, -1,  5, -5\\
	 -2, -4, -4, 10,  2,  4,  4, 38,  1,  5, -1, -5, -1, -5,  1,  5,  1, -1,  5, -5, -1,  1, -5,  5\\
	  5,  1, -5, -1, -5, -1,  5,  1, 38,  4, 10, -4,  4,  2, -4, -2,  5, -5, -5,  5,  1, -1, -1,  1\\
	  1,  5, -1, -5, -1, -5,  1,  5,  4, 38, -4, 10,  2,  4, -2, -4, -5,  5,  5, -5, -1,  1,  1, -1\\
	 -5, -1,  5,  1,  5,  1, -5, -1, 10, -4, 38,  4, -4, -2,  4,  2, -5,  5,  5, -5, -1,  1,  1, -1\\
	 -1, -5,  1,  5,  1,  5, -1, -5, -4, 10,  4, 38, -2, -4,  2,  4,  5, -5, -5,  5,  1, -1, -1,  1\\
	 -5, -1,  5,  1,  5,  1, -5, -1,  4,  2, -4, -2, 38,  4, 10, -4,  1, -1, -1,  1,  5, -5, -5,  5\\
	-1, -5,  1,  5,  1,  5, -1, -5,  2,  4, -2, -4,  4, 38, -4, 10, -1,  1,  1, -1, -5,  5,  5, -5\\
	 5,  1, -5, -1, -5, -1,  5,  1, -4, -2,  4,  2, 10, -4, 38,  4, -1,  1,  1, -1, -5,  5,  5, -5\\
	 1,  5, -1, -5, -1, -5,  1,  5, -2, -4,  2,  4, -4, 10,  4, 38,  1, -1, -1,  1,  5, -5, -5,  5\\
	 5, -5,  1, -1, -5,  5, -1,  1,  5, -5, -5,  5,  1, -1, -1,  1, 38, 10,  4, -4,  4, -4,  2, -2\\
	-5,  5, -1,  1,  5, -5,  1, -1, -5,  5,  5, -5, -1,  1,  1, -1, 10, 38, -4,  4, -4,  4, -2,  2\\
	 1, -1,  5, -5, -1,  1, -5,  5, -5,  5,  5, -5, -1,  1,  1, -1,  4, -4, 38, 10,  2, -2,  4, -4\\
	-1,  1, -5,  5,  1, -1,  5, -5,  5, -5, -5,  5,  1, -1, -1,  1, -4,  4, 10, 38, -2,  2, -4,  4\\
	-5,  5, -1,  1,  5, -5,  1, -1,  1, -1, -1,  1,  5, -5, -5,  5,  4, -4,  2, -2, 38, 10,  4, -4\\
	 5, -5,  1, -1, -5,  5, -1,  1, -1,  1,  1, -1, -5,  5,  5, -5, -4,  4, -2,  2, 10, 38, -4,  4\\
	-1,  1, -5,  5,  1, -1,  5, -5, -1,  1,  1, -1, -5,  5,  5, -5,  2, -2,  4, -4,  4, -4, 38, 10\\
	 1, -1,  5, -5, -1,  1, -5,  5,  1, -1, -1,  1,  5, -5, -5,  5, -2,  2, -4,  4, -4,  4, 10, 38\\
	\end{bmatrix}
	\end{equation}
	\end{figure*}

	Second, we verify whether such symmetric games $G=[a, b, b, d, c, e, e, f, a, b, c, e, b, d, e, f, a, c, b, e, b, e, d, f]$
	satisfy that $P_{\Gr_P}G^T=G^T$. It is directly obtained that $P_{\Gr_P}G^T=G^T$ holds no matter what real numbers
	$a,b,c,d,e,f$ are. Hence such symmetric games are always potential.

    Third, we obtain their potential
    functions according to Theorem \ref{thm4:game_decomposition} as $P(x)=[ (7a)/8 + b/2 - (7c)/8 + d/8 - e/2 - f/8,
         b/2 - a/8 + c/8 + d/8 - e/2 - f/8,
         b/2 - a/8 + c/8 + d/8 - e/2 - f/8,
         c/8 - b/2 - a/8 + d/8 + e/2 - f/8,
         b/2 - a/8 + c/8 + d/8 - e/2 - f/8,
         c/8 - b/2 - a/8 + d/8 + e/2 - f/8,
         c/8 - b/2 - a/8 + d/8 + e/2 - f/8, c/8 - b/2 - a/8 - (7d)/8 + e/2 + (7f)/8]x+c_0$, where
         $x=\ltimes_{i=1}^{3}{x_i}\in\Dt_8$, $c_0\in\R$ is arbitrary.
	If we choose $a = 1, b = 1, c = 2, d = -1, e = 1, f =-1$, and
    $c_0=-9/8$, we obtain the following
    potential function  $P(x)=[ -2,-1,-1,-1,-1,-1,-1,-1]x$, which is the same as the one obtained in
    \cite{Cheng2014PotentialGame}.
\end{example}


Now it is known that every symmetric game with three players and two strategies is potential. Then
is every finite symmetric game potential? Next we give a negative answer.

\begin{example}\label{exam2:orthogonal_operator_game}
	Consider symmetric games with $n=2$ and $k_1=k_2=3$, i.e.,
	$G=[a, b, c, d, e, f, g, h, i, a, d, g, b,\\ e, h, c, f, i],$
	where $a,b,c,d,e,f,g,h,i\in\R$ are arbitrary.
	By using Theorem \ref{thm17:game_decomposition}, the orthogonal projection onto
	the potential subspace is calculated in Eqn. \eqref{eqn_projection_exam2}.
	\begin{figure*}
	\begin{equation}\label{eqn_projection_exam2}
		P_{\Gr_P}=\frac{1}{18}\begin{bmatrix}
	 14,  2,  2,  2, -1, -1,  2, -1, -1,  4, -2, -2, -2,  1,  1, -2,  1,  1\\
	   2, 14,  2, -1,  2, -1, -1,  2, -1, -2,  4, -2,  1, -2,  1,  1, -2,  1\\
	   2,  2, 14, -1, -1,  2, -1, -1,  2, -2, -2,  4,  1,  1, -2,  1,  1, -2\\
	   2, -1, -1, 14,  2,  2,  2, -1, -1, -2,  1,  1,  4, -2, -2, -2,  1,  1\\
	  -1,  2, -1,  2, 14,  2, -1,  2, -1,  1, -2,  1, -2,  4, -2,  1, -2,  1\\
	  -1, -1,  2,  2,  2, 14, -1, -1,  2,  1,  1, -2, -2, -2,  4,  1,  1, -2\\
	   2, -1, -1,  2, -1, -1, 14,  2,  2, -2,  1,  1, -2,  1,  1,  4, -2, -2\\
	  -1,  2, -1, -1,  2, -1,  2, 14,  2,  1, -2,  1,  1, -2,  1, -2,  4, -2\\
	  -1, -1,  2, -1, -1,  2,  2,  2, 14,  1,  1, -2,  1,  1, -2, -2, -2,  4\\
	   4, -2, -2, -2,  1,  1, -2,  1,  1, 14,  2,  2,  2, -1, -1,  2, -1, -1\\
	  -2,  4, -2,  1, -2,  1,  1, -2,  1,  2, 14,  2, -1,  2, -1, -1,  2, -1\\
	  -2, -2,  4,  1,  1, -2,  1,  1, -2,  2,  2, 14, -1, -1,  2, -1, -1,  2\\
	  -2,  1,  1,  4, -2, -2, -2,  1,  1,  2, -1, -1, 14,  2,  2,  2, -1, -1\\
	   1, -2,  1, -2,  4, -2,  1, -2,  1, -1,  2, -1,  2, 14,  2, -1,  2, -1\\
	   1,  1, -2, -2, -2,  4,  1,  1, -2, -1, -1,  2,  2,  2, 14, -1, -1,  2\\
	  -2,  1,  1, -2,  1,  1,  4, -2, -2,  2, -1, -1,  2, -1, -1, 14,  2,  2\\
	   1, -2,  1,  1, -2,  1, -2,  4, -2, -1,  2, -1, -1,  2, -1,  2, 14,  2\\
	   1,  1, -2,  1,  1, -2, -2, -2,  4, -1, -1,  2, -1, -1,  2,  2,  2, 14
	\end{bmatrix}
\end{equation}\end{figure*}
	Such games are potential iff, $P_{\Gr_P}G^T=G^T$ iff, $ c - b + d - f - g + h=0$.
	
	Besides, by Theorem \ref{thm17:game_decomposition}, such games are pure harmonic iff,
	$(I_{18}-P_{\Gr_P})G^T=G^T$ iff, $a=e=i=0$ and $b=g=-c=-d=-h=f$.
\end{example}

\begin{example}\label{exam3:orthogonal_operator_game}
	From the orthogonal projection $P_{\Gr_P}$ in Example \ref{exam2:game_subspace}, it follows that
	the Rock-Paper-Scissors game $G=[0,-1,1,1,0,-1,-1,1,0,0,1,-1,-1,0,1,1,\\-1,0]$
	satisfies that $P_{\Gr_P}G^T=0$, hence this game
	is pure harmonic. \cite[Example 27]{Cheng2014PotentialGame} shows  that this game is not
	potential. Note that for this game, $n=2$, $k_1=k_2=3$, so the pure harmonic game and the pure harmonic
	game studied in \cite{Candogan2011FlowsDecompositionofGames} are the same.
	\cite[Example 4.1]{Candogan2011FlowsDecompositionofGames} also shows that this game is pure harmonic.
\end{example}

\section{Conclusion}\label{sec:conclusion}

In this paper, we have given complete characterization for how to project a finite game onto the
subspaces of pure potential games, nonstrategic games, pure harmonic games, potential games
and harmonic games, respectively. Based on this, one can easily verify whether a finite game belongs to one of
these subspaces.

From Theorem \ref{thm4:game_decomposition}, no matter a finite game is potential, one can 
obtain a function from the set of strategy profiles to the reals. Then for a general finite game,
what is the meaning of such a function? This interesting problem is left for further study.

Not like potential games, harmonic games are young games. The results of this paper may provide
an effective way to the further study on
theory and potential applications of harmonic games.


\section*{Acknowledgment}

The author would like to express his sincere gratitude to Prof. Daizhan Cheng, Dr. Hongsheng Qi,
and Mr. Ting Liu from Key Laboratory of Systems and Control,
Academy of Mathematics and Systems Sciences,
Chinese Academy of Sciences
for fruitful discussions, especially for the simpler proof of Theorem
\ref{thm6:game_decomposition} and the appearance of Theorem \ref{thm19:game_decomposition}.


\begin{thebibliography}{99}
%

\bibitem{Nash1950EquilibriumPoint}
J. Nash (1950).
Equilibrium points in $n$-person games,
{\it Proceedings of the National Academy of Sciences, 36}, 48--49.




\bibitem{Rosenthal1973PotentialGames}
R. W. Rosenthal (1973).
A class of games possessing pure-strategy Nash equilibria,
{\it Int. J. Game Theory, 2}, 65--67.


\bibitem{Wang2004GeneralizedInverseBook}
	G. Wang, Y. Wei, S. Qiao (2004).
\newblock  {\it Generalized
Inverses: Theory and Computations}.
\newblock  Science Press, Beijing/New York.



\bibitem{Zhang(2012)}
K. Zhang, C. Bu (2012).
\newblock Group inverses of matrices over right Ore domains.
\newblock  {\it Appl. Math. Comput.}, 218, 6942--6953.

\bibitem{Candogan2011FlowsDecompositionofGames}
O. Candogan, I. Menache, A. Ozdaglar, P. A. Parrilo (2011).
Flows and decompositions of games: harmonic and potential games,
\newblock {\it Math. Oper. Res.}, 36(3), 474--503.




\bibitem{Monderer1996PotentialGames}
D. Monderer, L. S. Shapley (1996).
Potential games, {\it Games and Economic Behavior, 14,} 124--143.



\bibitem{Cheng(2011book)}
D. Cheng, H. Qi, Z. Li (2011).
{\it Analysis and Control of Boolean Networks: A Semi-tensor Product
Approach,} London: Springer.




   \bibitem{Laschov2013ObservabilityofBN:GraphApproach}
	   D. Laschov, M. Margaliot, G. Even (2013).
	   Observability of Boolean networks: A graph-theoretic approach,
	   {\it Automatica, 49,} 2351--2362.

\bibitem{Fornasini2013ObservabilityReconstructibilityofBCN}
E. Fornasini, M. Valcher (2013).
Observability, reconstructibility and state observers of Boolean control networks,
{\it IEEE Trans. Automat. Control, 58(6),}  1390--1401.


\bibitem{Zou2015KalmanDecompositionBCN}
Y. Zou, J. Zhu (2015).
Kalman decomposition for Boolean control networks,
{\it Automatica 54}, 65--71.

\bibitem{Zhang2015Invertibility_BCN}
K. Zhang, L. Zhang, L. Xie (2015).
Invertibility and nonsingularity of Boolean control networks,
{\it Automatica, 60,} 155--164.


\bibitem{Cheng2001STP}
     D. Cheng (2001).
     Semi-tensor product of matrices and its application to Morgen's problem,
     {\it Sci. China Ser. F, 44,}  195--212.

\bibitem{Cheng2007NonlinearSys.LinearSymmetry}
D. Cheng, G. Yang, Z. Xi (2007).
Nonlinear systems possessing linear symmetry,
{\it Int. J. Robust Nonlinear Control, 17 (1)}, 51--81.

\bibitem{Cheng2003STPtoDifferentialGeometryLieAlgebra}
D. Cheng, L. Zhang (2003).
On semi-tensor product of matrices and its applications,
{\it Acta Math. Appl. Sinica, 19 (2)}, 219--228.


\bibitem{Cheng2014PotentialGame}
D. Cheng (2014).
On finite potential games,
{\it Automatica}, 50, 1793--1801.

\bibitem{Wang2013DistributedConsensusCongestionGame}
X. Wang, N. Xiao, T. Wongpiromsarn, L. Xie, E. Frazzoli,
 D. Rus (2013).
Distributed consensus in noncooperative congestion
games: An application to road pricing,
{\it IEEE Control
and Automation (ICCA), 2013 10th IEEE International Conference
on. }, 1668--1673.

\bibitem{Xiao2013ASFP}
N. Xiao, X. Wang, T. Wongpiromsarn, K. You, L. Xie,
E. Frazzoli, D. Rus (2013).
Average strategy fictitious play with
application to road pricing,
{\it  IEEE American Control Conference
(ACC), 2013. }, 1920--1925.

\bibitem{Marden2009JSFPwithInertiaPotentialGames}
J. Marden, G. Arslan,  J. Shamma (2009).
Joint strategy fictitious play
with inertia for potential games,
{\it IEEE Transactions on Automatic
Control}, 54 (2), 208--220.


\bibitem{Marden2009CooperativeControlPotentialGame}
J. R. Marden, G. Arslan, S. Shamma (2009).
Cooperative control and potential games,
{\it IEEE Transactions on Systems, Man, and Cybernetics—part B: Cybernetics}, 39 (6), 1393--1407.




\bibitem{Zhu2013DistributedCoverageGame}
M. Zhu, S. Martinez (2013).
Distributed coverage games for energy-aware mobile sensor networks,
{\it SIAM J. Control Optim.}, 51(1), 1--27.


\bibitem{Horn2012MatrixAnalysis}
R. A. Horn, C. R. Johnson (2012).
{\it Matrix Analysis},
Cambridge University Press; 2nd edition.

\end{thebibliography}
\end{document}